\newcommand{\vp}{\varepsilon}
\newcommand{\n}{\noindent}
\newcommand{\ovl}{\overline}
\newcommand{\bb}[1]{\mathbb{#1}}
\theoremstyle{plain}
\newtheorem{thm}{Theorem}
\newtheorem*{unthm}{Theorem}
\newtheorem{cor}{Corollary}
\newtheorem{pro}{Proposition}
\theoremstyle{definition}
\newtheorem{defn}{Definition}
\theoremstyle{remark}
\begin{document}

\title{Inverse pressure estimates and the independence of stable dimension
for non-invertible maps}

\author{Eugen Mihailescu and Mariusz Urba\'nski}

\date{}
\maketitle

\begin{abstract}

We study the case of an Axiom A holomorphic non-degenerate (hence non-invertible) map $f:\bb P^2
\bb C \to \bb P^2 \bb C$, where $\bb P^2 \bb C$ stands for the complex 
projective space of dimension 2. Let $\Lambda$ denote a basic set for $f$ of 
unstable index 1, and $x$ an arbitrary point of $\Lambda$; we denote by $\delta^s(x)$ the Hausdorff dimension of $W^s_r(x) \cap \Lambda$, where $r$ is some fixed positive number and $W^s_r(x)$ is the local stable manifold at $x$ of size $r$; $\delta^s(x)$ is called \textit{the stable dimension at} $x$.
In \cite{MU2}, Mihailescu and Urba\'nski introduced a notion of inverse 
topological pressure, denoted by $P^-$, which takes into consideration preimages of points. In \cite{MM}, Manning and McCluskey study the case of hyperbolic diffeomorphisms on real surfaces and give formulas for Hausdorff dimension. Our non-invertible situation is different here since the local unstable manifolds
are not uniquely determined by their base point, instead they depend in general
on whole prehistories of the base points. 
Hence our methods are different and are based on using a sequence of inverse pressures for the iterates of $f$, in order to give upper and lower estimates of the stable dimension (Theorem \ref{thm1}). As a Corollary, we obtain an estimate of the oscillation of the stable dimension on $\Lambda$. 
When each point $x$ from $\Lambda$ has the same number $d'$ of preimages in $\Lambda$, then we show in Theorem \ref{thm2} that $\delta^s(x)$ is independent of $x$; in fact $\delta^s(x)$ is shown to be equal in this case with the unique zero of the map $t \to P(t\phi^s - \log d')$.
We also prove the Lipschitz continuity of the stable vector spaces over
$\Lambda$; this proof is again different than the one for diffeomorphisms (however, the unstable distribution is not always Lipschitz for conformal non-invertible maps). In the end we include the corresponding results for a real conformal setting.  

\end{abstract}

\bigskip

\n {\bf Keywords:} \ Hausdorff dimension, stable manifolds, basic sets, 
inverse topological pressure.\bigskip

\n \S 1. Introduction and notations. Inverse topological pressure.

\n \S 2. Estimates from above and below for the stable dimension in the general holomorphic case using the inverse pressure of iterates.

\n \S 3. Independence of $\delta^s(x)$ (of $x$), when the map $f$ is open on $\Lambda$.

\n \S 4. Results in the real conformal case.

\bigskip

\textbf{AMS 2000 Subject Classification:} 37D20, 37A35, 37F35

\textbf{Acknowledgement:} The first author was supported in part by grant CNCSIS nr.153 (2004-2005), from the Romanian Ministry of Education and Research.

\newpage

\section{Introduction and notations. Inverse topological pressure}\label{sec1}

In the case of $C^2$ Axiom A diffeomorphisms of real surfaces, Manning and McCluskey (\cite{MM}) proved that the Hausdorff dimension of a basic set $\Lambda$ is given by the formula $HD(\Lambda) = \delta_u + \delta_s$, with $\delta_u, \delta_s$ being the unique zeros of the pressure functions of the potentials $ - t \log |Df_u|, t\log |Df_s|$ respectively, considered on $\Lambda$.
For the case of hyperbolic automorphisms on $\mathbb C^2$ (Henon maps), Verjovsky and Wu (\cite{VW}) showed that the Hausdorff dimension of the intersection between local stable manifolds 
and the Julia set is given also as the unique zero of a pressure function. For non-invertible conformal maps $f$ (for example holomorphic maps on the projective complex space $\mathbb P^2$) which are hyperbolic on a basic set $\Lambda$, the situation is completely different, and as shown in \cite{Mi} and \cite{MU1}, this stable dimension (precise definition will be given later) is not equal to the unique zero of the pressure function. At the same time, we do not have a uniquely determined unstable manifold going through a given point of the basic set $\Lambda$.
In order to deal with the non-invertible case, Mihailescu and Urbanski have introduced a notion of inverse pressure (\cite{MU2}), which takes into consideration all the inverse iterates of points (instead of the forward iterates from the case of usual topological pressure). In this paper we will obtain a theorem (Theorem \ref{thm1}) giving lower estimates of the stable dimension 
by using zeros of inverse pressures of iterates of $f$. As a Corollary we obtain an estimate of the maximum possible oscillation of the stable dimension on $\Lambda$. 

Then, when the map is open on the basic set $\Lambda$, we will prove (Theorem \ref{thm2}) that the stable dimension is independent of the point; in the proof we use again ideas and concepts related to inverse pressure. 

Most of these proofs and results work for a more general setting (finite-to-one conformal maps with hyperbolic structure on a basic set, and with the dimension of the stable vector spaces equal to 2), but we preffer to state them firstly in the case of holomorphic Axiom A maps on $\mathbb P^2$, and we include a section at the end of the paper with the theorems in the more general case. 

Note also that in Theorem \ref{thm0} we actually use the holomorphicity at the end of the proof.
 
In this section we recall some definitions and properties of inverse pressure,
which will be used later. We consider the following setting:

$X$ is a compact metric space, $f: X \to X$ is a continuous surjective map on $X$, and $Y\subseteq X$ is a subset of $X$.
Due to the surjectivity of $f$, for any point $y$ of $X$, and any positive integer $m$, there exists $y_{-m} \in X$ such that $f^m (y_{-m}) = y$.
By \textit{prehistory of length} $m$ (or $m$-\textit{prehistory}, or 
\textit{branch of length} $m$) of $y$,
we will understand a collection of consecutive preimages of $y$, $C = 
(y, y_{-1},..., y_{-m})$, where $f(y_{-i}) = y_{-i+1}, i=1,..,m, y_0=y$.
Given a prehistory $C$, we shall denote by $n(C)$ its length. Fix $\vp>0$.
Denote by $\mathcal{C}_m$ the set of all $m$-prehistories of points from $X$.
For such an $m$-prehistory $C$, let $X(C, \vp)$ be the set of points 
$\vp$-\textit{shadowed} by $C$ (in backward time) i.e:
$X(C,\vp):= \{z \in B(y_0,\vp): \exists 
z_{-1}\in f^{-1}(z) s.t.\ d(z_{-1}, y_{-1})<\vp,.., \exists z_{-m} \in 
f^{-1}(z_{-m+1}) s.t. \ d(z_{-m}, y_{-m}) < \vp\}$.
Given the $m$-prehistory of $y$, $C= (y, y_{-1}, ..., y_{-m})$ and a real 
continuous function $\phi$ on $X$, (we denote the set of real continuous 
functions on $X$, by $\mathcal{C}(X, \bb R) $), one can define the \textit{consecutive sum} of 
$\phi$ on $C$, 
$S^-_m \phi (C)= \phi(y) + \phi(y_{-1}) +...+ \phi(y_{-m})$.
We may also use the notation $S^-_m \phi(y_{-m})$ instead of $S^-_m \phi(C)$.
We will define now the inverse pressure $P^-$ by a procedure similar to that used in the case of Hausdorff outer measure.
Let $\phi$ be an arbitrary continuous function, $\phi \in \mathcal{C}(X, 
\bb R)$; let also $\lambda$ a real number and $N$ a positive integer.
Denote by $\mathcal{C}_* := \mathop{\cup}\limits_{m \ge 0} \mathcal{C}_m$.
We say that a subset $\Gamma \subset \mathcal{C}_*$, $\vp$-\textit{covers} $X$
if $X = \mathop{\cup}\limits_{C \in \Gamma} X(C, \vp)$.
Then define the following expression
$$
\aligned
M^-_f(\lambda, \phi, & Y, N, \vp):= \inf \{\sum_{C \in \Gamma} exp(-\lambda
n(C) + S^-_{n(C)}\phi(C)), n(C) \ge N, \forall C \in \Gamma, \\
& \text{and} \ \Gamma 
\subset \mathcal{C}_* \ \text{s.t} \
 Y\subset \mathop{\cup}\limits_{C\in \Gamma} X(C, \vp) \}
\endaligned
$$
When $N$ increases, the set of acceptable candidates $\Gamma$ which 
$\vp$-cover  X gets smaller , therefore the infimum increases in the previous
expression.
Hence $\lim\limits_{N \to \infty} M^-_f(\lambda,\phi,Y,N,\vp)$ exists and will
be denoted by $M^-_f(\lambda,\phi,Y,\vp)$.
Now, let $P^-_f(\phi, Y, \vp) := \inf \{\lambda:M^-_f(\lambda, \phi,Y,
\vp) = 0\}$. Consider two positive numbers $\vp_1 < \vp_2$ and let us compare 
$P^-_f(\phi, Y, \vp_1)$ and $P^-_f(\phi, Y, \vp_2)$.
Given any prehistory $C$, we have that $X(C, \vp_1) \subset X(C, \vp_2)$, so if $\Gamma \subset \mathcal{C}_*$ $\vp_1$-covers $Y$, then $\Gamma$ also 
$\vp_2$-covers $Y$.
Therefore there are more candidates $\Gamma$ in the expression of $M^-_f(\lambda, \phi, Y, N, \vp_2)$ than in the expression of $M^-_f(\lambda, \phi, Y, N, \vp_1)$. This shows that for any $N$,
$M^-_f(\lambda,\phi,Y,N,\vp_2) \le M^-_f(\lambda, \phi, Y,N,\vp_1)$.
Hence $0 \le M^-_f(\lambda,\phi,Y,\vp_2) \le M^-_f(\lambda,\phi,Y,\vp_1)$, 
and then from definition, $P^-_f(\phi,Y,\vp_2) \le P^-_f(\phi,Y,\vp_1)$.
This proves that, when $\vp$ decreases to 0, $P^-_f(\phi,Y,\vp)$ increases, so 
the limit $\lim\limits_{\vp\to 0}
P^-_f(\phi,Y,\vp)$ does exist and is denoted by $P^-_f(\phi,Y)$.
$P^-_f(\phi,Y)$ is called \textit{the inverse pressure} 
(or \textit{inverse upper pressure}) of $\phi$ on $Y$. $P^-_f(\phi,Y,\vp)$
is called the $\vp$-\textit{inverse pressure} of $\phi$ on $Y$.
This notion has been introduced in \cite{MU2}; here we have used slightly
different notations.
When the map $f$ will be clear from the context, we may drop the index $f$ 
from the notations for $P^-_f(\phi, Y), P^-_f(\phi,Y,\vp), M^-_f(\lambda,
\phi,Y,N,\vp)$, etc.
Also, we will denote by $P^-_f(\phi), P^-_f(\phi,\vp), M^-_f(\lambda,\phi,
N,\vp)$, etc., the quantities $P^-_f(\phi,X), P^-_f(\phi,X,\vp)$, 
$M^-_f(\lambda,\phi,X,N,\vp)$, etc., respectively.
The following proposition provides some properties of $P^-$.

\begin{pro}\label{pro1}
Let $f:X \to X$ be a continuous surjective map on the compact metric 
space $X$, $\vp$ a positive number and $\phi$ a function from 
$\mathcal{C}(X, \bb R)$. 
\begin{itemize}
\item[i)] If $Y_1\subset Y_2\subset X$, then $P^-_f(\phi,Y_1) \le P^-_f(\phi,
Y_2) $ and $P^-_f(\phi,Y_1,\vp) \le P^-_f(\phi,Y_2,\vp)$.
\item[ii)] If $Y = \mathop{\cup}\limits_{j\in J} Y_j$ is a finite or 
countable union 
of subsets of $X$, then $P^-_f(\phi,Y,\vp) = \sup\limits_{j \in J}
P^-_f(\phi,Y_j,\vp)$ and $P^-_f(\phi,Y) = \sup\limits_{j \in J} 
P^-_f(\phi,Y_j)$.
\item[iii)] If $f$ is a homeomorphism on $X$, then $P^-_f(\phi) = P_f(\phi)$,
where $P_f(\phi)$ denotes the usual (forward) topological pressure of 
$\phi$ with respect to the map $f$.
\item[iv)] $P^-_f(\phi,Y)$ is invariant to topological conjugacy, i.e 
if $f:X \to X, g:X' \to X'$ are continuous surjective maps and $\Psi:X\to X'$
is a homeomorphism such that $\Psi \circ f = g \circ \Psi$, then $P^-_f(\phi,
Y) = P^-_g(\phi\circ \Psi^{-1}, \Psi(Y))$, for any subset $Y \subset X$.
\end{itemize}
\end{pro}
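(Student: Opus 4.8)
The plan is to handle the four parts separately, each directly from the definitions; only part (iii) requires an external input. For (i): if $Y_1\subset Y_2$ then every $\Gamma\subset\mathcal{C}_*$ that $\vp$-covers $Y_2$ also $\vp$-covers $Y_1$, so the infimum defining $M^-_f(\lambda,\phi,Y_1,N,\vp)$ runs over a larger admissible family than the one for $Y_2$; hence $M^-_f(\lambda,\phi,Y_1,N,\vp)\le M^-_f(\lambda,\phi,Y_2,N,\vp)$, this passes to the limit $N\to\infty$, and so $\{\lambda:M^-_f(\lambda,\phi,Y_2,\vp)=0\}\subset\{\lambda:M^-_f(\lambda,\phi,Y_1,\vp)=0\}$, giving $P^-_f(\phi,Y_1,\vp)\le P^-_f(\phi,Y_2,\vp)$; letting $\vp\to0$ gives the unqualified inequality.

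For (ii) the inequality ``$\ge$'' is immediate from (i). For ``$\le$'' I would fix $\vp$ and any $\lambda>\sup_{j}P^-_f(\phi,Y_j,\vp)$; since $M^-_f(\cdot,\phi,Y_j,\vp)$ is nonincreasing in $\lambda$ we get $M^-_f(\lambda,\phi,Y_j,\vp)=0$, and since $M^-_f(\lambda,\phi,Y_j,N,\vp)$ increases to this limit with $N$ it is already $0$ for every $N$; then, given $\eta>0$, choose for each $j$ an admissible $\Gamma_j$ with all branch lengths $\ge N$ and $\sum_{C\in\Gamma_j}\exp(-\lambda n(C)+S^-_{n(C)}\phi(C))<\eta\,2^{-j}$ (replace $2^{-j}$ by $1/|J|$ if $J$ is finite), so that $\Gamma:=\bigcup_j\Gamma_j$ is an admissible $\vp$-cover of $Y$ of total weight $<\eta$, forcing $M^-_f(\lambda,\phi,Y,N,\vp)=0$ for all $N$ and hence $P^-_f(\phi,Y,\vp)\le\lambda$. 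Letting $\lambda$ decrease to the supremum, and then $\vp\to0$ while commuting the resulting monotone limits with the supremum, finishes (ii).

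For (iv), the point is that $\Psi$ yields a length-preserving bijection $C=(y,y_{-1},\dots,y_{-m})\mapsto\Psi(C)=(\Psi y,\Psi y_{-1},\dots,\Psi y_{-m})$ from $m$-prehistories for $f$ to $m$-prehistories for $g$ (from $\Psi f=g\Psi$ one has $g(\Psi y_{-i})=\Psi y_{-i+1}$), along which $S^-_{n(C)}(\phi\circ\Psi^{-1})(\Psi(C))=S^-_{n(C)}\phi(C)$. Since $\Psi$ is uniformly continuous on the compact space $X$, for each $\vp'>0$ there is $\vp>0$ with $\Psi(X(C,\vp))\subset X'(\Psi(C),\vp')$, the preimages of $f$ being carried to preimages of $g$ again through $\Psi f=g\Psi$. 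Hence $\Psi$ takes every $\vp$-cover of $Y$ to a $\vp'$-cover of $\Psi(Y)$ with the same weighted sums, so $M^-_g(\lambda,\phi\circ\Psi^{-1},\Psi(Y),N,\vp')\le M^-_f(\lambda,\phi,Y,N,\vp)$ for all $N$, whence $P^-_g(\phi\circ\Psi^{-1},\Psi(Y),\vp')\le P^-_f(\phi,Y,\vp)\le P^-_f(\phi,Y)$ for every $\vp'>0$; letting $\vp'\to0$ and then repeating the argument with $\Psi^{-1},g,f$ in place of $\Psi,f,g$ gives the equality.

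Part (iii) is the one where I expect the real work. When $f$ is a homeomorphism, each point has a unique $m$-prehistory $(y,f^{-1}y,\dots,f^{-m}y)$, the existential quantifiers in the definition of $X(C,\vp)$ collapse, and a substitution identifies $X(C,\vp)$ with a forward Bowen ball (of length $m+1$) for $g:=f^{-1}$ and $S^-_m\phi(C)$ with the Birkhoff sum $\sum_{k=0}^m\phi(g^ky)$; thus, up to a harmless passage from $m+1$ to $m$ iterates (a bounded factor in the weight and a shift of $\lambda m$ by $\lambda$), $M^-_f(\lambda,\phi,X,N,\vp)$ is exactly the Carath\'eodory--Pesin sum for $(g,\phi)$ on the compact space $X$ --- \emph{except} that $\phi$ is evaluated at the center of each Bowen ball rather than by a supremum over it. Controlling that center-versus-supremum gap is the crux: the difference in the exponent is at most $(m+1)\gamma(\vp)$ with $\gamma(\vp):=\sup\{|\phi(u)-\phi(v)|:d(u,v)<\vp\}\to0$, and pulling out a factor $e^{-\gamma(\vp)}$ and absorbing the rest into a shift of $\lambda$ by $\gamma(\vp)$ shows the center-evaluated and supremum-evaluated $\vp$-pressures differ by at most $\gamma(\vp)$, hence agree as $\vp\to0$. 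Granting this, the standard identification of the Carath\'eodory--Pesin pressure with the topological pressure on a compact space gives $P^-_f(\phi)=P_g(\phi)=P_{f^{-1}}(\phi)$, and since the $f$- and $f^{-1}$-invariant measures coincide and $h_\mu(f)=h_\mu(f^{-1})$, the variational principle yields $P_{f^{-1}}(\phi)=P_f(\phi)$.
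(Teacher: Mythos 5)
Your proof of part (ii) --- the only part the paper actually writes out --- is essentially the same as the paper's: both take $\lambda$ (resp.\ $t$) slightly above $\sup_j P^-_f(\phi,Y_j,\vp)$, choose for each $j$ an admissible $\Gamma_j$ with geometrically decaying weight $2^{-j}$, and union them; the only cosmetic difference is that you scale the target weight by $\eta\to 0$, while the paper inserts an $\alpha$-shift of $\lambda$ and harvests the decay as $e^{-\alpha N}$. Your argument then deduces $M^-_f(\lambda,\phi,Y,N,\vp)=0$ directly, which is fine. Parts (i) and (iv) the paper dismisses as straightforward, and your monotonicity / conjugacy arguments are correct and routine. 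Part (iii) is where you correctly identify the only genuine content: when $f$ is a homeomorphism, each $X(C,\vp)$ collapses to a Bowen ball for $g=f^{-1}$ and $S^-_m\phi(C)$ becomes a Birkhoff sum for $g$, so $M^-_f$ is exactly the Carath\'eodory--Pesin outer-measure construction of pressure for $g$ on $X$; the center-vs-supremum gap is indeed controlled by the modulus of continuity of $\phi$ and vanishes as $\vp\to0$, the Pesin pressure on a compact space equals the classical pressure, and $P_{f^{-1}}(\phi)=P_f(\phi)$ follows from the variational principle with $h_\mu(f)=h_\mu(f^{-1})$. That reduction is the right route, and since the paper offers no proof of (iii) your argument usefully fills the gap.
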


\begin{proof}
We will prove only part ii), the others are straightforward.
Assume that $Y=\mathop{\cup}\limits_{j\in J} Y_j$ is a finite or countable 
union of 
subsets  of $X$. We will show that, given some $\vp >0$, $P^-_f(\phi,Y,\vp)=
\sup\limits_j P^-_f(\phi,Y_j, \vp)$, for any function $\phi \in \mathcal{C}(X,
\bb R)$; the other equality, $P^-_f(\phi, Y) = \sup\limits_j P^-_f(\phi,Y_j)$
will follow similarly.
First, directly from the definition of $P^-$, it follows that $P^-_f(\phi, Y,
\vp) \ge \sup\limits_j P^-_f(\phi,Y_j,\vp)$.
Take now $t > \sup\limits_j P^-_f(\phi,Y_j,\vp)$. Then there exists some
number $\alpha >0$ so small that  $t-\alpha > P^-_f(\phi, Y_j, \vp), \forall
j \in J$.
So $M^-_f(t-\alpha,\phi,Y_j,\vp)=0$ for all $j\in J$. But from the fact that
$M^-_f(t-\alpha,\phi,Y_j,N,\vp)$ grows with $N$, we obtain that 
$M^-_f(t-\alpha, \phi, Y_j,N,\vp) = 0, \forall j\in J, \forall N >0$.
So, if $N$ is fixed, then for any $j \in J$ there exists a set 
$\Gamma_j \subset \mathcal{C}_*$ such that $Y_j \subset 
\mathop{\cup}\limits_{C\in \Gamma_j} X(C, \vp)$ and
$n(C) \ge N, \forall C\in \Gamma_j$ and we have 
$\sum_{C\in \Gamma_j} exp(-(t-\alpha)n(C) + S^-_{n(C)}\phi(C)) \le 
\frac{1}{2^j}$.
Now, if we consider the collection $\Gamma:=\mathop{\cup}\limits_{j \in J}
\Gamma_j$, then $Y= \mathop{\cup}\limits_{j \in J}Y_j \subset \mathop{\cup}
\limits_{C\in \Gamma}X(C,\vp), n(C) \ge N, \forall C \in \Gamma$, and
$\sum_{C\in \Gamma} exp(-(t-\alpha)n(C) + S^-_{n(C)}\phi(C)) \le 1$.
This means that $M^-_f(t-\alpha,\phi,Y,N,\vp)\le 1$, hence 
$M^-_f(t,\phi,Y,N,\vp) \le e^{-\alpha N}$. Thus $M^-_f(t,\phi,Y,\vp) = 0$ 
and $t\ge P^-_f(\phi,Y,\vp)$.
In conclusion, since $t$ has been taken arbitrarily larger than 
$\sup\limits_{j \in J} P^-_f(\phi, Y_j,\vp)$,  we obtain the required 
equality, 
$P^-_f(\phi,Y,\vp) = \sup\limits_{j \in J} P^-_f(\phi,Y_j,\vp)$.
\end{proof}

Here are also some additional properties of $P^-$, whose proofs can 
partly be found in \cite{MU2}; the proofs of the properties for 
$\vp$-inverse pressures are similar.

\begin{pro}\label{pro2}
Let $f:X\to X$ be a continuous surjective map on the compact metric 
space $X$, $Y$ a subset of $X$ and $\phi, \psi \in \mathcal{C}(X,\bb R)$. 
Then:
\begin{itemize}
\item[i)] $P^-_f(\phi+\alpha,Y) = P^-_f(\phi,Y)+\alpha.$
\item[ii)] If $\phi \le \psi$ on $Y$ and $\vp$ is a positive number, then 
$P^-_f(\phi,Y) \le P^-_f(\psi,Y)$ and $P^-_f(\phi,Y,\vp) \le P^-_f(\psi,Y,\vp).$
\item[iii)] $P^-_f(\cdot, Y)$ is either finitely valued or constantly $\infty$.
\item[iv)] $|P^-_f(\phi,Y) - P^-_f(\psi,Y)| \le ||\phi-\psi|| $ if 
$P^-_f(\cdot,Y)$ is finitely valued; a similar inequality holds for the 
corresponding $\vp$-inverse pressures.
\item[v)] $P^-_f(\phi+\psi\circ f - \psi, Y) = P^-_f(\phi,Y)$.
\item[vi)] If $\phi$ is a strictly negative function on $X$, then the mapping 
$t \to P^-_f(t\phi, Y)$ is strictly 
decreasing if $P^-_f(\cdot,Y)$ is finitely valued. Also the mapping
$t \to P^-_f(t \phi, Y, \vp)$ is strictly decreasing.
\end{itemize}
\end{pro}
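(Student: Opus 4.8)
The plan is to obtain all six items by unwinding the definition of $M^-_f(\lambda,\phi,Y,N,\vp)$ and tracking how the exponents $-\lambda\,n(C)+S^-_{n(C)}\phi(C)$ respond to a change of potential. The organizing remark is that the shadowing sets $X(C,\vp)$, hence the collections $\Gamma\subset\mathcal{C}_*$ which $\vp$-cover $Y$, depend only on $f$, $\vp$ and $Y$ and not on the potential; so every comparison can be made termwise for a fixed $\Gamma$ and then passed through the infimum. I would first prove i) and ii), and then deduce the remaining four items from them.

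For i), a prehistory $C$ with $n(C)=m$ contributes $m+1$ summands to $S^-_m$, so $S^-_{n(C)}(\phi+\alpha)(C)=S^-_{n(C)}\phi(C)+(n(C)+1)\alpha$, which rewrites the exponent as $-(\lambda-\alpha)n(C)+S^-_{n(C)}\phi(C)+\alpha$. Summing over any $\Gamma$ and taking the infimum gives $M^-_f(\lambda,\phi+\alpha,Y,N,\vp)=e^{\alpha}M^-_f(\lambda-\alpha,\phi,Y,N,\vp)$; letting $N\to\infty$ and then $\vp\to 0$, the sets of $\lambda$ on which $M^-$ vanishes are translated by $\alpha$, which is i). For ii), $\phi\le\psi$ gives $S^-_{n(C)}\phi(C)\le S^-_{n(C)}\psi(C)$ for every prehistory, hence $M^-_f(\lambda,\phi,Y,N,\vp)\le M^-_f(\lambda,\psi,Y,N,\vp)$ after taking infima; so vanishing of the $\psi$-quantity forces vanishing of the $\phi$-quantity, the inclusion of vanishing sets reverses, and passing to the infimum over $\lambda$ yields $P^-_f(\phi,Y)\le P^-_f(\psi,Y)$, and the same for the $\vp$-pressures.

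The rest is quick. Combining ii) with i) on $\phi\le\psi+\|\phi-\psi\|$ and $\psi\le\phi+\|\phi-\psi\|$ gives $|P^-_f(\phi,Y)-P^-_f(\psi,Y)|\le\|\phi-\psi\|$ (and the $\vp$-analogue), which is iv). For iii) I would pair this two-sided bound with the fact that $P^-_f(\cdot,Y)$ is never $-\infty$: for $\lambda<-\|\phi\|$ each summand of each admissible sum is at least $\exp(N(-\lambda-\|\phi\|)-\|\phi\|)\to\infty$, so $M^-_f(\lambda,\phi,Y,\vp)=\infty$ and $P^-_f(\phi,Y)\ge-\|\phi\|$; then the bound shows that if $P^-_f$ is $+\infty$ for one potential it is $+\infty$ for all, and finite for all otherwise. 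For v) the key computation is the telescoping $S^-_{n(C)}(\psi\circ f-\psi)(C)=\psi(f(y_0))-\psi(y_{-n(C)})$, coming from $\psi\circ f(y_{-i})=\psi(y_{-i+1})$ along $C=(y_0,\dots,y_{-n(C)})$; its absolute value is $\le 2\|\psi\|$ uniformly in $C$, so $M^-_f(\lambda,\phi+\psi\circ f-\psi,Y,N,\vp)$ and $M^-_f(\lambda,\phi,Y,N,\vp)$ differ only by a fixed multiplicative constant and vanish for the same $\lambda$, giving v). Finally vi): strict negativity on the compact space $X$ yields $\phi\le-c<0$, so $t\phi\le s\phi-(t-s)c$ when $s<t$, and ii) together with i) gives $P^-_f(t\phi,Y)\le P^-_f(s\phi,Y)-(t-s)c<P^-_f(s\phi,Y)$, and likewise for $P^-_f(\cdot,Y,\vp)$.

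I do not expect a genuine obstacle; the statements are formal consequences of the construction. The places that need the most care are: keeping the count of $n(C)+1$ (not $n(C)$) terms in $S^-_{n(C)}$ straight, since this is exactly what produces the harmless prefactor $e^{\alpha}$ in i); identifying the two surviving boundary terms $\psi(f(y_0))$ and $\psi(y_{-n(C)})$ in the telescoping for v), with $f(y_0)$ not being one of the vertices of the prehistory $C$; and verifying that $P^-_f$ never equals $-\infty$, which is what makes ``finitely valued'' in iii) (and the finiteness hypothesis in vi)) meaningful.
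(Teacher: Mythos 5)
Your proof is correct, and since the paper does not argue Proposition \ref{pro2} itself --- it only notes that the proofs ``can partly be found in \cite{MU2}'' --- there is no in-paper proof to compare against. Your approach is the natural one: fix an admissible $\Gamma$, compare the exponents $-\lambda n(C)+S^-_{n(C)}\phi(C)$ termwise under a change of potential, and pass through the infimum. The details you flag as needing care are exactly the right ones: $S^-_{n(C)}$ has $n(C)+1$ summands (producing the harmless factor $e^{\alpha}$ and the shift $\lambda\mapsto\lambda-\alpha$ in i)), the telescoping $S^-_{n(C)}(\psi\circ f-\psi)(C)=\psi(f y_0)-\psi(y_{-n(C)})$ for v), and the a priori bound $P^-_f(\phi,Y)\ge -\|\phi\|$ that makes ``finitely valued'' in iii) and vi) nontrivial.

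One caveat, which concerns the paper's phrasing more than your argument: in ii) the hypothesis is stated as ``$\phi\le\psi$ on $Y$'', but your termwise inequality $S^-_{n(C)}\phi(C)\le S^-_{n(C)}\psi(C)$ requires $\phi\le\psi$ at every vertex of every admissible prehistory $C$, and those vertices need not lie in $Y$: the $C\in\mathcal{C}_*$ are prehistories of arbitrary points of $X$ whose shadow sets $X(C,\vp)$ merely cover $Y$. So what you actually prove is ii) under the stronger hypothesis $\phi\le\psi$ on $X$ --- which is surely what the authors intend and is all that the rest of the paper uses (there $Y=X=\Lambda$). Similarly, the lower bound $P^-_f(\phi,Y)\ge -\|\phi\|$ in your argument for iii) tacitly assumes $Y\ne\emptyset$, since otherwise $\Gamma=\emptyset$ is admissible and $P^-_f(\cdot,\emptyset)\equiv -\infty$. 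Neither point affects the validity of your deductions; they are just worth recording for precision.
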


The inverse entropy $h^-$ obtained by definition as $P^-(0)$ is 
smaller or equal than 
the preimage entropy $h_i$ (\cite{MU2}) and actually, in the case of 
homeomorphisms, they both coincide with the usual topological entropy 
(definitions and useful properties of $h_i$ 
are given, for example, in \cite{N}, \cite{MU2}, etc).
Another interesting property of $P^-$ gives an alternative way of calculating
the inverse pressure, which will be used in a proof later:

\begin{pro}\label{pro3}[\cite{MU2}]
Let $f:X \to X$ be a continuous surjective map on a compact metric space $X$, 
and $\phi \in \mathcal{C}(X, \bb R)$. 
Denote by 
$Q_m^- (\phi, \vp):= \inf \{\sum\limits_{C\in \Gamma} exp(S^-_m \phi(C)), \Gamma
\subset \mathcal{C}_m, \Gamma \ \vp-\text{covering} \ X\}$.
Then $P^-(\phi) = \lim\limits_{\vp \to 0}\ovl{\lim\limits_{m \to \infty}}
\frac{1}{m} \cdot \log Q_m^- (\phi,\vp)$.
\end{pro}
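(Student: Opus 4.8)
The plan is to introduce $\pi(\vp):=\limsup_{m\to\infty}\frac1m\log Q_m^-(\phi,\vp)$ and $\rho(\delta):=\sup\{\,|\phi(a)-\phi(b)| : d(a,b)<\delta\,\}$, which tends to $0$ with $\delta$ by uniform continuity of $\phi$ on the compact space $X$, and to prove, for every $\vp>0$, the two inequalities
$$P^-_f(\phi,\vp)\ \le\ \pi(\vp)\qquad\text{and}\qquad\pi(2\vp)\ \le\ P^-_f(\phi,\vp)+\rho(\vp).$$
Since $X(C,\vp)$ increases with $\vp$, both $Q_m^-(\phi,\vp)$ and $\pi(\vp)$ are non-increasing in $\vp$; hence $\lim_{\vp\to0}\pi(\vp)=\sup_{\vp}\pi(\vp)$ exists, and $P^-_f(\phi)=\sup_{\vp}P^-_f(\phi,\vp)$, so the two inequalities give $\lim_{\vp\to0}\pi(\vp)=P^-_f(\phi)$ upon letting $\vp\to0$. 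If $P^-_f(\phi)=+\infty$ both sides equal $+\infty$, so by Proposition~\ref{pro2} iii) I may assume $P^-_f(\cdot)$ finitely valued.

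For the first inequality I would take $\lambda'>\lambda''>\pi(\vp)$, so that $Q_m^-(\phi,\vp)<e^{\lambda''m}$ for all large $m$; hence for such $m$ there is $\Gamma\subset\mathcal C_m$ that $\vp$-covers $X$ with $\sum_{C\in\Gamma}\exp(S^-_m\phi(C))<e^{\lambda''m}$, and this $\Gamma$ is an admissible family in $M_f^-(\lambda',\phi,X,N,\vp)$ whenever $N\le m$, with
$$\sum_{C\in\Gamma}\exp\bigl(-\lambda'n(C)+S^-_{n(C)}\phi(C)\bigr)=e^{-\lambda'm}\sum_{C\in\Gamma}\exp\bigl(S^-_m\phi(C)\bigr)<e^{(\lambda''-\lambda')m}.$$
The right-hand side tends to $0$ as $m\to\infty$, so $M_f^-(\lambda',\phi,X,N,\vp)=0$ for all large $N$, hence for all $N$ by monotonicity of $M^-_f$ in $N$; thus $M_f^-(\lambda',\phi,X,\vp)=0$ and $P^-_f(\phi,\vp)\le\lambda'$. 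Since $\lambda'>\pi(\vp)$ was arbitrary, this gives $P^-_f(\phi,\vp)\le\pi(\vp)$ — the routine direction.

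The substance is in the second inequality. Fix $\lambda>P^-_f(\phi,\vp)$; then $M_f^-(\lambda,\phi,X,\vp)=0$, so for every $N$ and every $\eta>0$ there is $\Gamma\subset\mathcal C_*$, with all $n(C)\ge N$, that $\vp$-covers $X$ and satisfies $\sum_{C\in\Gamma}\exp(-\lambda n(C)+S^-_{n(C)}\phi(C))<\eta$. Assume first that $f$ is open (the case relevant to this paper): then each $X(C,\vp)$ is open, so one may take $\Gamma$ finite, with lengths $\le L$, and choose $\eta$ small enough that $\theta:=e^{\|\phi\|}\sum_{C\in\Gamma}\exp(-\lambda n(C)+S^-_{n(C)}\phi(C))<\tfrac12$. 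For a large $m$ I would now build a $2\vp$-covering of $X$ by prehistories of length exactly $m$ by \emph{walking a prehistory through} $\Gamma$: starting from $z\in X$, pick $C_1\in\Gamma$ with $z\in X(C_1,\vp)$ and follow a length-$n(C_1)$ prehistory of $z$ that $\vp$-shadows $C_1$, ending at a point $z'$; pick $C_2\in\Gamma$ with $z'\in X(C_2,\vp)$ and prolong by a prehistory of $z'$ that $\vp$-shadows $C_2$; continue until the accumulated length first reaches $m$, and truncate to length exactly $m$. Grouping the prehistories so obtained according to the word $\mathbf C=(C_1,\dots,C_k)$ used, and retaining one representative $D^*(\mathbf C)$ per nonempty group, the shadowing estimates show that the walk of any point of a given group stays within $\vp$ of the concatenated reference path of $\mathbf C$, hence within $2\vp$ of $D^*(\mathbf C)$; thus $\Gamma':=\{D^*(\mathbf C)\}$ is a finite $2\vp$-covering of $X$ by $m$-prehistories.

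Finally I would estimate $\sum_{D\in\Gamma'}\exp(S^-_m\phi(D))$. Along a walk there are $k-1$ ``seams'', each altering the consecutive sum by at most $\|\phi\|$; replacing shadowing points by the reference points of $\mathbf C$ costs at most $(m+1)\rho(\vp)$; truncating inside the last block costs at most $L\|\phi\|$; and $m\le n(C_1)+\dots+n(C_k)<m+L$. Combining these one gets, with $C_0$ independent of $m$,
$$\exp\bigl(S^-_m\phi(D^*(\mathbf C))\bigr)\le C_0\,e^{m(\lambda+\rho(\vp))}\prod_{j=1}^{k}\Bigl(e^{\|\phi\|}\exp\bigl(-\lambda n(C_j)+S^-_{n(C_j)}\phi(C_j)\bigr)\Bigr).$$
Because each block has length $\ge N$, the number of blocks satisfies $k\le m/N+1$, so summing over all words $\mathbf C$ arising in the construction bounds $\sum_{D\in\Gamma'}\exp(S^-_m\phi(D))$ by $C_0\,e^{m(\lambda+\rho(\vp))}\sum_{j\ge1}\theta^{\,j}\le C_0\,e^{m(\lambda+\rho(\vp))}$. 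Hence $Q_m^-(\phi,2\vp)\le C_0\,e^{m(\lambda+\rho(\vp))}$, so $\pi(2\vp)\le\lambda+\rho(\vp)$, and letting $\lambda\downarrow P^-_f(\phi,\vp)$ yields the second inequality. The main obstacle is exactly this passage from a covering by prehistories of \emph{variable} length to one by prehistories of a \emph{common} length $m$: one must keep the number of blocks, the seam corrections and the truncation loss all of size $o(m)$ uniformly in $m$, which is why it is crucial that the blocks be long (their minimal length $N$ is free). In the general, non-open setting $\Gamma$ cannot be reduced to a finite family and the contribution of arbitrarily long prehistories in $\Gamma$ needs a separate treatment; that case is the one carried out in \cite{MU2}.
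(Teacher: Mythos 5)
The paper cites \cite{MU2} for this proof and does not reproduce it, so there is nothing in-paper to compare against. Your skeleton --- the two one-sided bounds $P^-_f(\phi,\vp)\le\pi(\vp)$ and $\pi(2\vp)\le P^-_f(\phi,\vp)+\rho(\vp)$ followed by $\vp\to0$ --- is the natural way to pass between the dimension-type quantity $M^-_f$ (variable-length prehistories) and the capacity-type quantity $Q^-_m$ (fixed length), and your proof of the first inequality is correct and fully general.

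The concatenation-and-truncation argument for the second inequality is also essentially right where it applies. The bookkeeping is sound: seams cost at most $(k-1)||\phi||$, switching from shadowing points to reference points costs $(m+1)\rho(\vp)$, truncation costs at most $L||\phi||$, the factor coming from $m\le\sum_j n(C_j)<m+L$ goes into an $m$-independent constant, and the geometric series over block-words converges once $\eta$ is small. The crucial feature is that all corrections are $o(m)$ uniformly in $m$, and this uses that the block lengths are bounded above by $L$ (as well as below by the free parameter $N$).

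That is exactly where the gap is, and you flag it yourself: the existence of a finite $L$ rests on replacing $\Gamma$ by a finite subcover, which needs each $X(C,\vp)$ to be open, which needs $f$ to be open. This is not a hypothesis of the proposition, which is stated for an arbitrary continuous surjection on a compact metric space; and openness of $f|_\Lambda$ is not available where this characterization is used in the present paper (it is invoked in the proof of Proposition~\ref{pro4}(a), whereas openness of $f|_\Lambda$ is hypothesized only from Section~3 onward). Without a uniform bound on block length, the truncation loss $(\sum_j n(C_j)-m)||\phi||<n(C_k)||\phi||$ is unbounded over the (possibly infinite) family $\Gamma$, and the resulting factor $e^{(|\lambda|+||\phi||)n(C_k)}$ cannot be offset by the single term $\exp(-\lambda n(C_k)+S^-_{n(C_k)}\phi(C_k))<1$ that the last block contributes. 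So this is a genuine gap, not a harmless restriction; closing it requires an additional mechanism --- typically controlling the total mass that $\Gamma$ places on very long prehistories, or comparing the truncated block to something other than the full block --- which is what the proof in \cite{MU2} has to supply.
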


In the sequel, we will focus on the case of a holomorphic non-degenerate map 
$f:\bb P^2 \to \bb P^2$, where $\bb P^2$ represents the 2-dimensional complex 
projective space $\bb P^2 \bb C$. Any holomorphic map $f$ on $\bb P^2$ is 
given 
as  $ f([z:w:t]) = [P(z,w,t):Q(z,w,t):R(z,w,t)]$, with $P, Q, R$ homogeneous 
polynomials in $z,w,t$, all having the same degree $d$.
If $d \ge 2$, then $f$ is called non-degenerate; in this case $f$ is 
non-invertible.
We shall assume in the sequel that $f$ is non-degenerate and has Axiom A; 
let $\Lambda$ be one of its basic sets of \textit{unstable index} 1, meaning 
that $Df$ has on $\Lambda$
both stable and unstable directions. For definitions and discussions of 
Axiom A for non-invertible maps \cite{Ru} or \cite{Mi} 
are good references.
An important point to remember is that, since $f$ is not invertible on the invariant set $\Lambda$, one has to define hyperbolicity with respect to the 
\textit{natural extension} of $\Lambda$.
We recall briefly this notion and also how to define hyperbolicity in this non-invertible case.
Denote first by $\hat \Lambda := \{\hat x = (x, x_{-1},...) \text{where} \
 x_{-i} \in \Lambda \ \text{and} \ f(x_{-i-1})=x_{-i}, i \ge 0, x_0 = x \}$ 
and call this set the \textit{natural extension} of $\Lambda$ with 
respect to $f$.
$\hat \Lambda$ is a compact metric space endowed with the metric 
$d(\hat x,\hat y) = \sum_{i\ge 0} \frac{d(x_{-i}, y_{-i})}{2^i}$.
More general, we can define a metric $d_K$ on $\hat \Lambda$ for any 
$K > 1$ by setting $d_K(\hat x, \hat y) = \sum_{i \ge 0} \frac{d(x_{-i}, y_{-i})}{K^i}.$
As above, we will not specify the constant $K$ in the notation 
$d_K$ when $K=2$.
Also, it can be noticed that for all $K > 1$, $d_K$ gives the same 
topology on $\hat \Lambda$, namely the topology induced on the subset
$\hat \Lambda$ by the product topology on the larger space $\Lambda^{\bb N}$.
We denote by $\pi:\hat \Lambda \to \Lambda$ the canonical projection 
$\pi(\hat x) = x$ and by $\hat f$ the homeomorphism $\hat f: \hat \Lambda 
\to \hat \Lambda, \hat f (\hat x) = (fx, x, x_{-1}, ...)$. 
The hyperbolicity of $f$ on $\Lambda$ means that there exist
constants $C >0, \lambda' >1$, and for every $\hat x \in \hat \Lambda$, 
a vector space $E^u_{\hat x} \subset T_x \bb P^2$, and a vector space 
$E^s_x \subset T_x \bb P^2$ such that $Df(E^u_{\hat x}) \subset 
E^u_{\widehat{fx}}, Df(E^s_x)
\subset E^s_{fx}$ and we have the inequalities
$||Df^k_x(v)|| \le C  (\lambda')^{-k}||v||, ||Df^k_x(w)|| \ge C 
(\lambda')^k ||w||$,
for every $x\in \Lambda, k \ge 0$ and all vectors $v\in E^s_x, 
w\in E^u_{\hat x}$. 
In the definition of hyperbolicity on $\hat \Lambda$ we assume also that 
$E^s_x \oplus E^u_{\hat x} = T_x \bb P^2, \forall \hat x\in \hat \Lambda$ and that $E^s_x$ depends continuously on $x$, while $E^u_{\hat x}$ depends 
continuously on $\hat x$.
$E^s_x$ is called the \textit{stable tangent vector space} (or the 
\textit{stable space}) at $x$. $E^u_{\hat x}$ is called the 
\textit{unstable tangent vector space} (or \textit{unstable space}) 
corresponding to the prehistory  $\hat x$.
Like in the diffeomorphism case, it is possible (\cite{Ru}) to show that, 
if $r$ is small enough (for example $0 < r < r_0$), there exist stable and unstable local manifolds passing through $x$:
$W^s_r(x) := \{y \in \bb P^2, d(f^i x, f^i y) < r, i\ge 0\}$, and $W^u_r(\hat x) := \{y \in \bb P^2, \exists \  \hat y\in\pi^{1}(y) \
\text{with} \ d(y_{-i}, x_{-i}) < r, i \ge 0\}$.
If moreover $f$ is holomorphic on $\bb P^2$, the local (un)stable 
manifolds on a basic set of unstable index 1, are analytic disks.

Now, given a point $x\in \Lambda$ and a small fixed number $0< r < r_0 < \frac{\text{diam}
\Lambda}{2}$, denote by 
$\delta^s(x):= HD(W^s_r (x) \cap \Lambda)$, where $HD$ stands for the 
Hausdorff dimension of a set.
We shall call $\delta^s(x)$, \textit{the stable dimension at} $x$.
In the sequel we shall suppose also that $\mathcal{C}_f \cap \Lambda = 
\emptyset$, where $\mathcal{C}_f$ denotes the critical set of $f$. 
Hence, one can define the negative function $\phi^s(y):= \log |Df|_{E^s_y}|,
y \in \Lambda$; as a notational remark, $E^s_y$ is a one-dimensional 
complex space and $|Df|_{E^s_y}|$ denotes the norm of $Df$ restricted to this 
stable space.

We studied the stable dimension in \cite{Mi}, \cite{MU1}, \cite{MU2}.
In \cite{Mi}, the first author showed that $\delta^s(x) \le t^s_*$, where 
$t^s_*$ is the unique zero of the pressure function $t \to P(t\phi^s)$ (the 
topological pressure being calculated with respect to the map $f|_{\Lambda}$).
However in the above inequality we do not have equality in general.
Indeed the gap between $\delta^s(x)$ and $t^s_*$ is influenced by the number 
of preimages that a point from $\Lambda$ has in $\Lambda$, as was explained in 
\cite{MU1}, where we obtained a better upper estimate $t^s_0$:

\begin{unthm}

In the above setting, assume that the map $f|_{\Lambda}$ has the property that
every point $x \in \Lambda$ has at least $d' \le d$ preimages in $\Lambda$.
Then $\delta^s(x) \le t^s_0$, where $t^s_0$ is the unique zero of the 
function $t \to P(t\log |Df|_{E^s_y}| - \log d')$ and as a consequence,
$\delta^s(x) \le \frac{h(f|_{\Lambda})-\log d'}{|\log \mathop{\sup}\limits_{y \in \Lambda} |Df|_{E^s_y}||}$.

\end{unthm}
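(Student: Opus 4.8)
\section*{Proof proposal for the Theorem}

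The plan is to show that $\mathcal H^t\big(W^s_r(x)\cap\Lambda\big)=0$ for every $t>t^s_0$, whence $\delta^s(x)\le t$ and, letting $t\downarrow t^s_0$, the bound $\delta^s(x)\le t^s_0$. First I would fix a Markov partition $\mathcal R=\{R_1,\dots,R_k\}$ of $\Lambda$ of diameter smaller than $r$, small enough that $f$ is injective on each $R_i$ and the local product structure of $\Lambda$ holds; since $\mathcal C_f\cap\Lambda=\emptyset$ the potential $\phi^s$ is H\"older on $\Lambda$ and $f^n$ has bounded distortion along inverse branches remaining in a neighbourhood of $\Lambda$. The object to be estimated is the ``stable Cantor set'' $W^s_r(x)\cap\Lambda$, whose pieces are cut out by prehistories of points lying on the leaf through $x$.

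Next I would build, for each $n\ge 1$, a cover of $W^s_r(x)\cap\Lambda$ by the sets $f^n\big(W^s_r(z)\cap\Lambda\big)$, where $z$ ranges over the $n$-th preimages of $x$ lying in $\Lambda$. That these sets cover $W^s_r(x)\cap\Lambda$ follows by iterating the inverse graph transform and the Markov property, i.e. from $W^s_r(x)\cap\Lambda=\bigcup_{q\in f^{-1}(x)\cap\Lambda}f\big(W^s_r(q)\cap\Lambda\cap V_q\big)$ for $r$ small; by bounded distortion, each such set has diameter at most $C\exp\big(S_n\phi^s(z)\big)$, with $S_n\phi^s(z)=\sum_{j=0}^{n-1}\phi^s(f^jz)<0$. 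The essential point is that this cover is highly redundant: by the hypothesis every $y\in W^s_r(x)\cap\Lambda$ has at least $(d')^n$ prehistories of length $n$ in $\Lambda$, each producing a member of the cover that contains $y$; moreover, viewed as a subset of the leaf $W^s_r(x)$, the piece $f^n\big(W^s_r(z)\cap\Lambda\big)$ depends, up to a comparable size, only on its base point and not on the whole prehistory $z$.

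The crux is to turn this $(d')^n$-fold multiplicity into a genuine thinning of the cover: I would extract, among the $n$-th preimages of $x$, a subfamily $\Gamma_n$ still covering $W^s_r(x)\cap\Lambda$ and for which
$$
\sum_{z\in\Gamma_n}\Big(\operatorname{diam}f^n\big(W^s_r(z)\cap\Lambda\big)\Big)^t\ \le\ C\,(d')^{-n}\!\!\sum_{z\in f^{-n}(x)\cap\Lambda}\!\!\exp\big(tS_n\phi^s(z)\big).
$$
Organizing the prehistories into such a subfamily — controlling the overlaps of the leaf-pieces and the fibres of the coding map $(\text{prehistories})\to\Lambda$, so that the geometric redundancy is matched by an arithmetic gain of $(d')^n$ — is the main obstacle, and it is here that conformality (holomorphicity) and the local product structure of $\Lambda$ enter. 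Granting it, the standard transfer‑operator estimate for the H\"older potential $t\phi^s$ yields $\sum_{z\in f^{-n}(x)\cap\Lambda}\exp\big(tS_n\phi^s(z)\big)\le C'\exp\big(nP(t\phi^s)\big)$, so that
$$
\sum_{z\in\Gamma_n}\Big(\operatorname{diam}f^n\big(W^s_r(z)\cap\Lambda\big)\Big)^t\ \le\ C''\exp\!\big(n\,(P(t\phi^s)-\log d')\big)=C''\exp\!\big(nP(t\phi^s-\log d')\big).
$$

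Finally, since $t\mapsto P(t\phi^s-\log d')$ is strictly decreasing with unique zero $t^s_0$, for $t>t^s_0$ the right‑hand side tends to $0$ as $n\to\infty$; hence $\mathcal H^t\big(W^s_r(x)\cap\Lambda\big)=0$, $\delta^s(x)\le t$, and thus $\delta^s(x)\le t^s_0$. For the stated consequence, set $M=\sup_{y\in\Lambda}|Df|_{E^s_y}|<1$, so that $\phi^s\le\log M$ on $\Lambda$; then $0=P(t^s_0\phi^s-\log d')\le h(f|_{\Lambda})+t^s_0\log M-\log d'$ by $P(\psi)\le h(f|_{\Lambda})+\sup\psi$, which rearranges to $t^s_0\le\big(h(f|_{\Lambda})-\log d'\big)/|\log M|$ and gives the displayed estimate for $\delta^s(x)$.
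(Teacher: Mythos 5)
This theorem is not proved in the present paper; it is quoted from \cite{MU1}. Judged on its own terms, your proposal fails already at the covering step, before the step you yourself flag as the real difficulty. Each set $f^n\bigl(W^s_r(z)\cap\Lambda\bigr)$ with $z\in f^{-n}(x)\cap\Lambda$ contains $x$ (because $z\in W^s_r(z)$ and $f^n(z)=x$), and since $f^n$ contracts along stable leaves one has $f^n\bigl(W^s_r(z)\bigr)\subset W^s_{\rho}(x)$ with $\rho\le r\,(\sup_\Lambda|Df_s|)^n < r$. Thus all of these sets are nested sub-discs of the stable leaf around the single point $x$; their union is at most the largest of them and cannot cover $W^s_r(x)\cap\Lambda$. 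The same objection defeats the one-step identity $W^s_r(x)\cap\Lambda=\bigcup_{q\in f^{-1}(x)\cap\Lambda}f\bigl(W^s_r(q)\cap\Lambda\cap V_q\bigr)$: each $f\bigl(W^s_r(q)\bigr)$ is a proper sub-disc through $x$, so the right-hand side is strictly contained in the left. To cut $W^s_r(x)\cap\Lambda$ into small pieces one must index the pieces by prehistories in $\Lambda$ of the \emph{varying} points $y$ on the leaf, not by preimages of the fixed base point $x$; this is exactly why the inverse-pressure machinery of the paper organizes covers via prehistories $C$ and the shadowed sets $\Lambda(C,\varepsilon)$ rather than via the fibre $f^{-n}(x)$.

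Even setting that aside, the step that carries all the weight is left unproved: extracting from the $(d')^n$-redundant family a subfamily $\Gamma_n$ whose $t$-sum gains the factor $(d')^{-n}$. You correctly call it ``the main obstacle'' and then write ``Granting it''. There is no simple pigeonhole here: the pieces attached to a given $y$ via its different prehistories range in size over a factor comparable to $(\lambda_s/\chi_s)^n$, so $(d')^n$-fold overlap of a cover does not, by itself, let one discard all but a $(d')^{-n}$-fraction of the $t$-sum. That is precisely where the substance of \cite{MU1} lies. The final reduction (from $P(\psi)\le h(f|_\Lambda)+\sup\psi$ to the displayed numerical bound) is correct, but the argument as a whole is incomplete.
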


Let us focus now on the zeros $t^s_n(\vp)$ of the $\vp$-inverse pressure 
functions 
for the iterates $f^n|_{\Lambda}$.
If $\Lambda$ is a basic set for $f$, then $f(\Lambda) = \Lambda$, hence
$f^n(\Lambda) = \Lambda, \forall n > 0$ integer.
Let us denote by $Df_s(y)$ the linear map $Df|_{E^s_y}$; 
similarly,
$D f^n_s(y)$ denotes $Df^n|_{E^s_y}, y \in \Lambda$.
Since $f$ is conformal on stable manifolds, $|Df^n_s(y)| = |D f_s(y)|\cdot |D f_s(fy)|\cdot...
\cdot |D f_s(f^{n-1}y)|, \forall y \in \Lambda$.
$\phi^s_n(y) := \log |Df^n_s (y)|, y \in \Lambda$, so $\phi^s_n$ is a strictly 
negative function on $\Lambda$, which has finite values since $\mathcal{C}_f
\cap \Lambda = \emptyset$.
From Proposition 2 vi) applied to $f^n|_{\Lambda}:\Lambda\to \Lambda$, it 
follows that the function $t \to P^-_{f^n}(t \phi^s_n,\vp)$ is strictly 
decreasing; since $P^-_{f^n}(0,\vp) \ge 0$, and $P^-_{f^n}(t\phi^s_n,\vp) < 0$ for $t>0$ large enough,
it follows that this strictly decreasing function has a unique zero, denoted by
$t^s_n(\vp)$. The same is true for the function $t \to P^-_{f^n}(t \phi^s_n)$
which has a unique zero $t^s_n$. When $n=1$ we denote $t^s_1(\vp)$ by $t^s(\vp)$, and $t^s_1$ by $t^s$.
We shall prove in the sequel that $t^s_n(\vp) \ge t^s_{np}(\vp)$ and $t^s_n = 
t^s$, for any positive integers $n, p$ and any $\vp >0$.

First, we will prove that the stable spaces $E^s_y$ depend Lipschitz 
continuously on $y \in \Lambda$. In addition we will show the Lipschitz 
continuity of $y \to E^s_y$ when
$y$ ranges in $W^s_r(x)$ ( $x \in \Lambda$), and moreover, that the 
Lipschitz
constant on these stable leaves can be chosen independently of the point 
$x\in \Lambda$ in the holomorphic case. Remark also that the unstable spaces cannot depend Lipschitz on their base points
since in general they depend on whole prehistories. In \cite{Mi}, one of the authors 
showed that the unstable spaces $E^u_{\hat x}$ depend Holder continuously on $\hat x$, with 
respect to a fixed metric 
$d_K$ on $\hat \Lambda$; the respective Holder exponent depends on the chosen constant $K>1$.
The following theorem was known in the case of conformal diffeomorphisms, 
but up to our knowledge it has never appeared in the case of non-degenerate 
holomorphic maps on $\mathbb P^2$ (which are non-invertible). 
As it turns out below, the non-invertible case requires its own proof, different from the one given for diffeomorphisms. 
(for example, in the non-invertible situation we cannot use the inverse 
iterate $f^{-1}$, and on the natural extension $\hat \Lambda$ we cannot use 
a differentiable structure).

\begin{thm}\label{thm0}
Consider $f:\bb P^2 \to \bb P^2$ a holomorphic Axiom A map, and let $\Lambda$
be one of its basic sets of unstable index 1, such that
$\mathcal{C}_f \cap \Lambda = \emptyset$.
Then the map $x \to E^s_x$ is Lipschitz continuous as a map from $\Lambda$ to the bundle
$G_1(\Lambda)$ of spaces of complex dimension 1 in the tangent bundle over
$\Lambda$, i.e. there exists
a positive constant $\Upsilon$ such that for all $x, y$ from $\Lambda$,
$d(E^s_x, E^s_y) \le \Upsilon d(x, y)$.
In particular, if $\phi^s(y):= \log |Df|_{E^s_y}|, y \in \Lambda$, then
$\phi^s$ is Lipschitz continuous.
Moreover, there exist a small $r>0$ and $\Xi>0$ such that for any $x \in
\Lambda$ and any points $y, z \in W^s_r(x)$, we have $|\phi^s(y) - \phi^s(z)|
\le \Xi \cdot d(y,z)$. 
\end{thm}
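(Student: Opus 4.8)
The plan is to establish the Lipschitz continuity of $x \mapsto E^s_x$ by a standard graph-transform / invariant-cone argument, adapted to the non-invertible setting. Fix a point $x_0 \in \Lambda$ and use the exponential map on $\bb P^2$ to identify a neighbourhood of $x_0$ with a neighbourhood of $0$ in $T_{x_0}\bb P^2 = \bb C^2$, splitting as $E^s_{x_0} \oplus E^u_{x_0}$ (where, since $f$ is open on $\Lambda$ or at least we work locally, we may pick some prehistory to define $E^u$ near $x_0$; in fact for the stable bundle this choice is irrelevant). In these coordinates the stable space $E^s_x$ over a nearby $x \in \Lambda$ is the graph of a linear map $L_x : E^s_{x_0} \to E^u_{x_0}$, and $d(E^s_x, E^s_y)$ is comparable to $\|L_x - L_y\|$. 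The hyperbolicity estimates guarantee that $Df$ contracts a stable cone field and that the induced action on the space of such graphs is a uniform contraction. I would then write the invariance relation: $Df_x$ maps $E^s_x$ into $E^s_{fx}$, which in local coordinates around $fx$ becomes a functional equation of the form $L_{fx} = \Phi_x(L_x)$, where $\Phi_x$ is a graph-transform map whose Lipschitz constant in the graph-variable is bounded by some $\kappa < 1$ (coming from $(\lambda')^{-2}$ or similar), while $\Phi$ depends on the base point through $Df$ and the coordinate changes, which are Lipschitz because $f$ is $C^2$ (indeed real-analytic) and $\Lambda$ is compact and disjoint from $\cl{C}_f$.

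The key step is the following telescoping estimate. Since $E^s_x$ is determined by the forward orbit of $x$, I would compare $E^s_x$ and $E^s_y$ for $x, y \in \Lambda$ with $d(x,y)$ small by comparing them through their images under $Df^n$. Concretely, using the functional equation iteratively along the orbits $x, fx, f^2x, \dots$ and $y, fy, \dots$, and the fact that local stable manifolds at $f^n x$ and $f^n y$ stay within a bounded region, one gets
\begin{equation}
\|L_x - L_y\| \le \sum_{n=0}^{\infty} \kappa^{n} \cdot \text{(Lipschitz bound of the coefficient maps)} \cdot d(f^n x, f^n y).
\end{equation}
But $d(f^n x, f^n y)$ can be controlled only when both points lie on a common local stable manifold (where $d(f^n x, f^n y) \le C(\lambda')^{-n} d(x,y)$), and for general $x, y \in \Lambda$ one does not have such contraction. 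The standard fix, which I would use, is a two-step argument: first prove the Lipschitz estimate for $y \in W^s_r(x) \cap \Lambda$ using the contraction along the stable manifold; this directly handles the "moreover" clause of the theorem as well. Then, for arbitrary $x, y \in \Lambda$, use the local product structure of $\Lambda$: connect $x$ to $y$ by going along a stable piece and then an unstable piece (bracket $[x,y]$), reducing to the stable case plus the analogous estimate in the unstable direction — and here is where the main obstacle lies, because the unstable bundle is only Hölder, not Lipschitz.

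To circumvent the unstable obstruction I would instead run the graph-transform argument in backward time for the stable bundle along unstable directions: the point is that $E^s$ is canonically defined at every point of $\Lambda$ (not just on $\hat\Lambda$), so to compare $E^s_x$ with $E^s_z$ when $z$ lies on the local unstable manifold through $x$, one picks prehistories: $z = z_0$ has a prehistory shadowing that of $x$, and along the prehistory the unstable manifold contracts, i.e. $d(x_{-n}, z_{-n}) \le C(\lambda')^{-n} d(x,z)$. Now $E^s_{x_{-n}}$ and $E^s_{z_{-n}}$ are close (inductively), and pushing forward by $Df^n$ — which expands stable-cone complements but we are looking at the genuinely stable subspace, on which $Df^n$ is a uniform contraction — one shows $\|L_x - L_z\|$ is small, again via a geometrically convergent series. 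Combining the stable-direction estimate and this unstable-direction estimate through the local product structure yields the global Lipschitz bound $d(E^s_x, E^s_y) \le \Upsilon\, d(x,y)$. The Lipschitz continuity of $\phi^s(y) = \log|Df|_{E^s_y}|$ then follows immediately: $y \mapsto Df_y$ is Lipschitz (as $f \in C^2$, $\Lambda$ compact), $y \mapsto E^s_y$ is Lipschitz by what we just proved, $x \mapsto \log|x|$ is Lipschitz away from $0$, and $|Df|_{E^s_y}|$ is bounded away from $0$ and $\infty$ on $\Lambda$ since $\cl{C}_f \cap \Lambda = \emptyset$; composing gives the claim, and the uniform bound $\Xi$ on stable leaves comes from the fact that the $W^s_r(x)$-version of the estimate has constants depending only on $r$, the hyperbolicity constants $C, \lambda'$, and the $C^2$-norm of $f$, none of which depend on $x$. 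The main place holomorphicity enters, as flagged in the excerpt, is in ensuring the local stable manifolds are honest analytic disks so the graph coordinates and their derivatives behave uniformly; in the purely $C^2$ real setting one would invoke instead the standard $C^{1+\alpha}$ section theory, which still gives Lipschitz dependence under the present hypotheses.
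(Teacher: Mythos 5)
Your argument along stable leaves --- a backward graph-transform recursion with fiber contraction $\kappa<1$ played against the forward stable contraction $d(f^k x,f^k y)\le C(\lambda')^{-k}d(x,y)$ --- is essentially sound. The gap is in the unstable-direction comparison, which is where the real difficulty lies. For $z\in W^u_r(\hat x)$ you pass to shadowing prehistories so that $d(x_{-n},z_{-n})\le C(\lambda')^{-n}d(x,z)$, and then assert that ``pushing forward by $Df^n$'' gives a convergent series because $Df^n$ contracts the genuinely stable subspace. This conflates the contraction of \emph{vectors} in $E^s$ under $Df^n$ with the action of $Df$ on the Grassmannian near $E^s$: the latter is an \emph{expansion} in forward time, since $E^s_x$ is a repelling fixed point of the projectivized derivative, with rate about $|Df_u|/|Df_s|>1$. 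Your forward telescoping therefore has the form
\[
\|L_x-L_z\|\ \le\ (\kappa')^n\,\|L_{x_{-n}}-L_{z_{-n}}\| \ +\ L\sum_{k=1}^n(\kappa')^{k-1}\,d(x_{-k},z_{-k}),
\]
with $\kappa'>1$; the ratio $\kappa'(\lambda')^{-1}$ is of order $\lambda_s^{-1}>1$, so the sum diverges, and the leading term blows up unless one already knows that $\|L_{x_{-n}}-L_{z_{-n}}\|$ decays at least like $(\kappa')^{-n}$ --- which is a quantitative modulus-of-continuity statement about $E^s$ and hence circular. The local product structure does not reduce the unstable comparison to a naive iteration.

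The paper gets around this by a different mechanism. It applies Wilkinson's Pointwise H\"{o}lder Section Theorem to the \emph{backward} graph transform $\Psi$ over the natural extension $\hat\Lambda$ with the metric $d_K$: the fiber contraction is $\lambda_{\hat x}\approx|Df_s|/|Df_u|$, the base map $h=\hat f^{-1}$ has local lower Lipschitz rate $\mu_{\hat x}$ with $\mu_{\hat x}^{-1}\approx|Df_u|+1/K$, and the key computation is that $\sup_{\hat x}\lambda_{\hat x}\,\mu_{\hat x}^{-1}\approx\sup_\Lambda|Df_s|<1$ once $K$ is large --- exactly the spectral gap that forces the H\"{o}lder exponent up to $1$. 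That only yields $d(E^s_x,E^s_y)\le C_K\,d_K(\hat x,\hat y)$, so a second essential step (Cases~1 and~2 in the paper) passes from $d_K$-Lipschitz on $\hat\Lambda$ to Lipschitz on $\Lambda$: given $x,y\in\Lambda$ with $d(x,y)<\tilde\vp_0$, one constructs consecutive preimages $(x,x^*_{-1},\dots,x^*_{-n})$ and $(y,y^*_{-1},\dots,y^*_{-n})$ in $\Lambda$ that stay $\vp_0$-close for $n$ of order $\log(\vp_0/d(x,y))/\log\lambda_s^{-1}$, and uses $K>\lambda_s^{-2}$ to obtain $d_K(\hat{x^*},\hat{y^*})\le C''\,d(x,y)$. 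This construction of temporarily shadowing prehistories in $\Lambda$ is precisely what replaces the unstable-direction argument you are missing. (The paper also proves the ``moreover'' clause differently, by a compactness argument on the second derivatives of the holomorphic parametrizations $h_x$ of the stable disks --- this is where holomorphicity is genuinely invoked --- rather than by the recursion along a leaf; the real-case version of the theorem stated in \S4 notably omits that clause.)
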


\begin{proof}
For every $K>1$ consider the metric $d_K$ on $\hat \Lambda$, given by the formula
$d_K(\hat x, \hat y):= d(x,y)+\frac{d(x_{-1},y_{-1})}{K}+\frac{d(x_{-2}, y_{-2})}{K^2}+...$.
Notice that the topology given by $d_K$ on
$\hat \Lambda$ is independent of $K$ and is induced by the product topology
on a countable product of $\Lambda$'s.
In the sequel we shall use a Pointwise H\"{o}lder Section Theorem from \cite{W}.

\begin{unthm}[Pointwise H\"{o}lder Section Theorem]
Let $E = X \times Y$ be a vector bundle over a metric space $X$, where $Y$
is a closed, bounded subset of a Banach space, and let $\pi: E \to X$ be the
canonical projection. Let $F:E \to E$ be a bundle map covering a
homeomorphism $h:X \to X$, i.e $\pi \circ F = h \circ \pi$.
Suppose that $F$ satisfies the following conditions:
\begin{itemize}
\item[1)] $F$ contracts the fibers of $E$ in the sense that, for all $x \in X$
there exists a constant $0 \le \lambda_x < 1$ such that
$d(F(x,y), F(x,z)) \le \lambda_x d(y,z), \forall y, z \in Y.$
\item[2)] There exist constants $L \ge 1$ and $\alpha > 0$ such that for all
$x, x' \in X$ and $y \in Y$, $|F(x,y)-F(x',y)| \le L \cdot d(x,x')^{\alpha}.$
\item[3)] There exists some positive number $\eta$ such that
$\sup\limits_{x\in X} \lambda_x \cdot \mu_x^{-\alpha} =: \rho(\alpha) < 1$
where $\mu_x$ denotes:
$\mu_x:= \inf \{\frac{d(hx, hx')}{d(x,x')}, x, x' \in X, x \ne x', d(x, x')
< \eta \}$.
Also, let us denote by $\mu:= \inf\limits_{x\in X}\mu_x$ and assume that
$\mu > 0$.
\end{itemize}
Then we have the following:
\begin{itemize}
\item[i)] there exists a unique section $\sigma :X \to E$ whose image is
invariant under $F$, i.e $\sigma \circ h(x) = F \circ \sigma(x), x \in X$.
\item[ii)] $\sigma$ is H\"{o}lder continuous with exponent $\alpha$, i.e
$|\sigma(x) - \sigma(x')| \le H d(x,x')^\alpha, \forall x, x' \in X.$
\item[iii)] Assume that the diameter of $Y$ is bounded by R, then we can
bound the H\"{o}lder constant $H$ by:
$H \le \frac{LR}{\mu \eta^\alpha (1-\sup \lambda_x \mu_x^{-\alpha})}$.
\end{itemize}
\end{unthm}
Let us now return to our setting and see how we can apply this theorem.
By definition of hyperbolicity of $f$, there exists a continuous splitting of
the tangent bundle to $\bb P^2$ over $\hat \Lambda$, given by
$T_{\hat \Lambda} \bb P^2 = E^s \oplus E^u$, where $E^s_x$ depends
continuously on $x \in \Lambda$ and $E^u_{\hat x}$ depends continuously on
$\hat x \in \hat \Lambda$.
The stable space $E^s_x$ and the stable manifold of size $r>0$ at $x$ 
depend only on the forward
iterates of $x$, whereas the unstable space $E^u_{\hat x}$ and the unstable 
manifold $W^u_r(\hat x)$ depend
on the entire prehistory $\hat x$ of $x$.
Let us take an arbitrary constant $K > 1$ and consider the metric $d_K$
on $\hat \Lambda$.
Since continuous maps can be approximated by Lipschitz continuous maps,
there exists a splitting $F^s \oplus F^u(K)$ of $T_{\hat \Lambda} \bb P^2$
such that
the linear subspaces of complex dimension 1, $F^s_x$, depend Lipschitz
continuously on $x \in \Lambda$ and the subspaces of dimension 1,
$F^u_{\hat x}(K)$
depend Lipschitz on $\hat x \in \hat \Lambda$; also we assume that
$F^s_x$
approximates $E^s_x$, and $F^u_{\hat x}(K)$ approximates $E^u_{\hat x}$
uniformly in $x$, respectively $\hat x$. As a remark, the spaces
 $F^u_{\hat x}(K)$ depend in general on $K$ since they have to vary Lipschitz 
continuously with respect to the metric $d_K$, whereas the spaces $F^s_x$ are 
Lipschitz only with respect to the usual euclidian metric induced on 
$\Lambda$, therefore they do not depend on $K$. 
 Let us assume that $d(F^s_x, E^s_x)< \vp, d(F^u_{\hat x}(K), E^u_{\hat x}) <
\vp$, for all $\hat x$ in $\hat \Lambda$, where $\vp$ is a small
positive number.
From the above Lipschitz conditions, there exist positive constants $\tau$
and $\tau_K$ such that
$ d(F^s_x, F^s_y) \le \tau d(x,y), \forall x, y \in \Lambda$,
and $d(F^u_{\hat x}(K), F^u_{\hat y}(K)) \le \tau_K d_K(\hat x,\hat y),
\forall \hat x, \hat y \in \hat \Lambda$.
In this case, $E^s_x$ can be interpreted as the image of a linear map from
$F^s_x$ to $F^u_{\hat x}(K)$, for any prehistory $\hat x $ of $x \in \Lambda$.
Consider therefore $\mathcal{L}_{\hat x}(K):= L(F^s_x, F^u_{\hat x}(K))$
be the space of linear maps from $F^s_x$ to $F^u_{\hat x}(K)$.
$\mathcal{L}(K)$ will denote the vector bundle over $\hat \Lambda$ given by
$\mathcal{L}_{\hat x}(K), \hat x \in \hat \Lambda$, where we consider the
metric $d_K$ on $\hat \Lambda$.
The space $X$ of the H\"{o}lder Section Theorem will be $\hat \Lambda$
endowed with $d_K$ and the
homeomorphism $h$ from the statement of the same theorem is the map
$\hat f ^{-1}: \hat \Lambda \to \hat \Lambda$.
We will also consider the bundle map $\Psi : \mathcal{L}(K) \to \mathcal{L}(K)$
induced by the graph transform associated to the derivative $D f^{-1}(\hat x):
F^s_x \oplus F^u_{\hat x}(K) \to F^s_{x_{-1}} \oplus F^u_{\hat f^{-1}\hat x}(K)$, where $\hat x = (x, x_{-1}, ....) \in \hat \Lambda$.  The mapping
$Df^{-1}(\hat x)$ represents the derivative at $x$ of the local branch
of $f^{-1}$ which takes $x$ into $x_{-1}$, in case $\hat x = (x, x_{-1},...)$
is an arbitrary point of $\hat \Lambda$; this derivative does exist
because we assumed that the critical set of $f$ does not intersect $\Lambda$.
In the sequel we shall use also the notation $Df_s^{-1}(\hat x)$ as being
the inverse of the isomorphism $Df_s(x_{-1}): E^s_{x_{-1}} \to
E^s_{x}$; similarly for the notation $Df_u^{-1}(\hat x)$.
The notion of graph transform used above is explained in \cite{S}.
If we assume that $Df^{-1}(\hat x) = \left( \begin{array}{clr}
                                               A_{\hat x} & B_{\hat x}(K) \\
                                               C_{\hat x}(K) & G_{\hat x}(K)
                                              \end{array} \right)$,
then $A_{\hat x}:F^s_x \to F^s_{x_{-1}}, B_{\hat x}(K): F^u_{\hat x}(K) \to
F^s_{x_{-1}}, C_{\hat x}(K): F^s_x \to F^u_{\hat f^{-1}\hat x}(K),
G_{\hat x}(K) : F^u_{\hat x}(K) \to
F^u_{\hat f^{-1}\hat x}(K)$; let us notice that from the decomposition
above, $B_{\hat x}(K), C_{\hat x}(K)$ and $G_{\hat x}(K)$ depend on $K$, but
$A_{\hat x}$ does not, since the bundle $F^s$ is independent of $K$.
From the definition of graph transform,
\begin{equation}\label{Psi-def}
\Psi_{\hat x}(g) = (C_{\hat x}(K) + G_{\hat x}(K) g)\circ (A_{\hat x}+
B_{\hat x}(K)g)^{-1},
\end{equation}
for any linear map $g \in \mathcal{L}_{\hat x}(K)$.
So it can be noticed that $\Psi_{\hat x}(g) \in
\mathcal{L}_{\hat f^{-1} \hat x}(K)$, for any $\hat x \in \hat \Lambda$.
From construction, $A_{\hat x}$ and $G_{\hat x}(K)$ approximate
$Df_s^{-1}(\hat x)$, respectively
$Df_u^{-1}(\hat x)$, while $|B_{\hat x}(K)| < a_1(\vp), |C_{\hat x}(K)| <
a_1(\vp)$, where $a_1(\cdot)$ is a positive continuous function with
$a_1(0) = 0$.
Hence, if $\vp$ is small enough, then the Lipschitz constant of
$\Psi_{\hat x}$ is
smaller or equal than $\lambda_{\hat x}(K)$, where:
\begin{equation}\label{lambda}
\lambda_{\hat x}(K) := |D f_u^{-1} (\hat x)|\cdot
|D f_s (x_{-1})| + a_2(\vp)
= \frac{|Df_s(x_{-1})|}{|Df_u (x_{-1})|} + a_2 (\vp) < 1,
\end{equation}
and where $a_2(\vp)$ is a positive continuous function in $\vp$,
with $a_2(0) = 0$.
Let us recall now that the metric on $\hat \Lambda$ is $d_K$ which
depends on the constant $K >1$.
In the same spirit as in \cite{S}, we can also assume that the bundle $E:=
\mathcal{L}(K)$ is trivial, otherwise we can replace it with
$E \oplus E'$, for
some complementary bundle $E'$. This replacement does not depend on the
metric $d_K$, since the metric on $E$ is already induced by the product of
the metric
$d_K$ on $\hat \Lambda$ and the usual euclidian metric on the spaces of
linear maps.
We will estimate the local Lipschitz constant $\mu_{\hat x}(K)$ of
$h$ at $\hat x \in \hat \Lambda$, where $h = \hat f^{-1}$ is our base
homeomorphism. Thus, as in the statement of the Pointwise H\"{o}lder
Section Theorem, let $\mu_{\hat x}(K) := \inf\{\frac{d_K(h \hat x,
h \hat y)}{d_K(\hat x, \hat y)}, \hat x \ne \hat y, \hat x, \hat y \in
\hat \Lambda \ \text{and} \ d_K(\hat x, \hat y) < \eta\}$ for some small
$\eta >0$.
Denote also by $\mu(K) := \inf\limits_{\hat x \in \hat \Lambda}
\mu_{\hat x}(K)$.
Then we have:
\begin{equation}
d_K( \hat x, \hat y) = d(x, y)+\frac{d(x_{-1},y_{-1})}{K}+
\frac{d(x_{-2},y_{-2})}{K^2}+.. = d(x, y) + \frac{1}{K} d(\hat f^{-1} \hat x, \hat f^{-1} \hat y)
\end{equation}
Let us denote by $\vp_0$ a positive constant depending only on $f$ such
that $f$ is injective on balls of radius $\vp_0 (\inf\limits_{\Lambda}
|Df_s|)^{-1}$ centered 
on $\Lambda$ and
such that we can apply the Mean Value Inequality  on balls of radius
$\vp_0 (\inf\limits_{\Lambda}|Df_s|)^{-1}$ .
Suppose that $0< \eta < \vp_0$.
If $d_K(\hat x, \hat y) < \eta$, and $d_K(\hat f^{-1}\hat x,
\hat f^{-1}\hat y) > \eta$, then $d_K(\hat x, \hat y) <
(|D f_u(x_{-1})|+\frac{1}{K})
d_K(\hat f^{-1}\hat x, \hat f^{-1}\hat y)$ since $|D f_u(x_{-1})|+\frac{1}{K}
> 1$.
So, with the assumption that $d_K(\hat x, \hat y) <  \eta$, let us suppose
also that $d_K(\hat f^{-1}\hat x, \hat f^{-1}\hat y) < \eta$.
Hence $d(x_{-1}, y_{-1}) < \eta$ and, from our assumption
it follows also that $d( x, y) < \eta$, so, using the Mean 
Value Inequality, we obtain that:
\begin{equation}\label{small}
d_K(\hat x, \hat y) \le (|Df_u (x_{-1}')| + \frac{1}{K})
d_K(\hat f^{-1} \hat x, \hat f^{-1} \hat y) = (|Df_u(x_{-1}')|+ \frac{1}{K})
d_K(h \hat x, h \hat y),
\end{equation}
where $x_{-1}'$ is some point with $d( x_{-1}, x_{-1}') < \eta$.
This implies that the constant $\mu_x$ which appears in the Pointwise
H\"{o}lder Section Theorem is represented in our situation by
$\mu_{\hat x}(K)$ and, as we saw in (~\ref{small}),
\begin{equation}\label{mu}
\mu_{\hat x}(K) \ge
(|Df_u (x_{-1})| + \frac{1}{K} + \omega(|Df_u|, \eta) )^{-1},
\end{equation}
where $\omega(|Df_u|, \eta)$ is the maximum oscillation of $|Df_u|$ on a ball
of radius $\eta$ centered at an arbitrary point of $\Lambda$, and we used 
above that $|Df_u(x_{-1}')| \le |Df_u(x_{-1})| + \omega(|Df_u|, \eta)$.

Next, we show that $\Psi_{\hat x}$ is Lipschitz in $\hat x$; recall that we
assumed that $\mathcal{L}(K)$ is a trivial bundle, so we can identify
all the 1-dimensional complex spaces $\mathcal{L}_{\hat x}(K)$ with $\bb C$,
and do this independently of $K$.
We wish to prove that there exists a constant $\Theta_K > 0$ such that
\begin{equation}\label{psi}
|\Psi_{\hat x}(g) - \Psi_{\hat y}(g)| \le \Theta_K d_K(\hat x, \hat y),
\forall \hat x, \hat y \in \hat \Lambda, \forall g \in \bb C, |g| \le 1
\end{equation}
From the fact that $f$ is smooth and $F^s$ depends Lipschitz in $x \in
\Lambda$, while $F^u_{\hat x}(K)$ depends Lipschitz in $\hat x \in \hat
\Lambda$, it follows that $A_{\hat x}$ depends Lipschitz in $x$ (with respect 
to the euclidian metric induced on $\Lambda$) and
$B_{\hat x}(K), C_{\hat x}(K), G_{\hat x}(K)$ depend Lipschitz in $\hat x$ 
(with respect to the metric $d_K$).
Recall from (~\ref{Psi-def}) that $\Psi_{\hat x}(g) = 
(C_{\hat x}(K) + G_{\hat x}(K) g)
\cdot (A_{\hat x}+
B_{\hat x}(K)g)^{-1}$, for any linear map $g \in \mathcal{L}_{\hat x}(K)$.
But in our case, $g, A_{\hat x}, B_{\hat x}(K), C_{\hat x}(K), G_{\hat x}(K)$
are just complex numbers. It is enough to show that $\hat x \to (A_{\hat x}
+B_{\hat x}(K)g)^{-1}$ is Lipschitz.
But since we work with complex numbers we have $|(A_{\hat x}
+B_{\hat x}(K)g)^{-1}-(A_{\hat y}
+B_{\hat y}(K)g)^{-1}| = \left| \frac{(A_{\hat y}-A_{\hat x}) + (B_{\hat y}(K)
- B_{\hat x}(K)) g}{(A_{\hat x}+B_{\hat x}(K)g)(A_{\hat y}+B_{\hat y}(K)g)} 
\right|$. Now we use the fact that $A_{\hat x}, B_{\hat x}(K)$ depend 
Lipschitz in $\hat x$ and $|B_{\hat x}(K)| < a_1(\vp) << 1, \forall \hat x 
\in \hat \Lambda$.
Thus, for $|g| \le 1$ we get that $|A_{\hat x}+B_{\hat x}(K)g|$ is 
uniformly (in $\hat x$) bounded away 
from 0, since $|A_{\hat x}|$ approximates $|Df_s^{-1}(\hat x)|$ (and we know
that $|Df_s^{-1}(\hat x)| \ge (\sup\limits_{\Lambda}|Df_s|)^{-1} > 0$),
and $|B_{\hat x}(K)|$ is very small in comparison to $|A_{\hat x}|$.
In conclusion we obtained the Lipschitz continuity of $\Psi$, hence 
inequality (~\ref{psi}). 

Let us check now the condition 3) of the Pointwise H\"{o}lder Section
Theorem with $\alpha = 1$.
Using the relations in (~\ref{lambda}) and (~\ref{mu}), we have that:
\begin{equation}\label{rho}
\aligned
\rho(1, K)&:= \sup\limits_{\hat x \in \hat \Lambda} \lambda_{\hat x} \cdot
\mu_{\hat x}(K)^{-1} \le  (\frac{|Df_s(x_{-1})|}{|Df_u (x_{-1})|} + a_2 (\vp))
\cdot (|Df_u (x_{-1})| + \frac{1}{K} + \omega(|Df_u|, \eta) ) =\\
&= (\frac{|Df_s(x_{-1})|}{|Df_u(x_{-1})|} + a_2(\vp)) \cdot (\frac{1}{K} +
\omega(|Df_u|, \eta)) + \frac{|Df_s(x_{-1})|}{|Df_u (x_{-1})|} \cdot
|Df_u(x_{-1})| + a_2(\vp) |Df_u(x_{-1})| \le \\
&\le |Df_s(x_{-1})| + M(\vp, \eta, K) < 1,
\endaligned
\end{equation}
where $M(\vp, \eta, K)$ is a positive continuous function in $\vp$, $\eta$,
and $K$ with $M(0, 0, \infty) = 0$. This is why in the last inequality
of (~\ref{rho}) we were able 
to take $M(\vp, \eta, K) <
1 - \sup\limits_{\Lambda}|Df_s|$,
for $\vp$ and $\eta$ small enough and $K$ large enough. The values of such
$\vp, \eta, K$ depend only on $f$.
Therefore, we found that in this case condition 3) of the Pointwise Section
Theorem is satisfied for $\alpha = 1$.

Now, according to (~\ref{psi}), it follows that condition 2) from the
statement of the Pointwise Section
Theorem is satisfied as well, so all the conditions of the Pointwise
H\"{o}lder Section Theorem hold and we get that the unique invariant section
$\sigma$ is Lipschitz.
But in our case this unique invariant section $\sigma$ is just the stable
bundle, $\sigma(\hat x) = E^s_x, \forall \hat x \in \hat \Lambda$,
hence there exists a constant $C_K$ depending on $K$ such that:
\begin{equation}\label{stablelip}
d(E^s_x, E^s_y) \le C_K d_K(\hat x, \hat y), \forall \hat x,
\hat y \in \hat \Lambda
\end{equation}
Let us denote now by $\lambda_s := \inf\limits_{z \in \Lambda}
|Df_s(z)|$, and take $\tilde \vp_0:= \lambda_s \vp_0$, where the number 
$\vp_0$ has been introduced earlier; clearly $\tilde\vp_0 \ne 0$
since the critical set of $f$ avoids $\Lambda$.
We want to prove that (~\ref{stablelip}) implies that, in fact, $x \to
E^s_x$ is Lipschitz.

Case 1:
Let us then assume first that $x, y \in \Lambda$ with $d(x, y) \ge \tilde
\vp_0$.
If $\Delta_0$ denotes the diameter of $\Lambda$, then
\begin{equation}\label{case1}
\aligned
d_K(\hat x, \hat y) &\le d(x, y) + \frac{2\Delta_0}{K} \le
d(x, y) + \frac{2\Delta_0}{K} \cdot \frac{d(x, y)}{\tilde\vp_0} \le\\
& \le  d(x, y)(1+\frac{2\Delta_0}{K \tilde\vp_0}) < 
d(x, y)(1+\frac{2\Delta_0}{\tilde\vp_0})
\le C' d(x, y),
\endaligned
\end{equation}
with $C' >0 $ a constant independent of $K$.

Case 2:

Now suppose that $0 < d(x, y) < \tilde\vp_0$ for some $x, y \in \Lambda$.
We consider here the map $f$ restricted to $\Lambda$.
We will say that $(x, x_{-1} ..., x_{-n})$ are consecutive preimages of $x$
in $\Lambda$ if $f(x_{-1}) = x, f(x_{-2}) = x_{-1}, ..., f(x_{-n}) = x_{-n+1}$
and $x_{-j} \in \Lambda, \forall j = 1..n$.
Consider $n = n(x, y)$ to be the largest
positive integer such that there exist consecutive preimages of $x$ and of
$y$, $(x, x_{-1}^*, ..., x_{-n}^*)$ and $(y, y_{-1}^*, ..., y_{-n}^*)$ with
$d(x_{-i}^*, y_{-i}^*) < \vp_0, i = 1, .., n$.
Since $n$ is the largest such integer, it follows that, for some $x_{-n-1}^*
\in f^{-1}(x_{-n}^*)$ and $y_{-n-1}^* \in f^{-1}(y_{-n}^*)$, with
$d(x_{-n-1}^*, y_{-n-1}^*) < \vp_0 \lambda_s^{-1}$, we have:
\begin{equation}\label{largest}
\vp_0 < d(x_{-n-1}^*, y_{-n-1}^*) \le \lambda_s^{-1} d(x_{-n}^*, y_{-n}^*)
\end{equation}
We also obtain
\begin{equation}\label{mvi}
d(x_{-i}^*, y_{-i}^*) \le \lambda_s^{-i} d(x, y), i=1,.., n
\end{equation}
From (~\ref{largest}) and (~\ref{mvi}), we obtain that $d(x_{-n-1}^*,
y_{-n-1}^*) \le \lambda_s^{-n-1} d(x, y)$. This implies that, for any
complete prehistories
$\hat{x^*}, \hat{y^*}$ of $x, y$, which start with the consecutive preimages
$(x, x_{-1}^*,..., x_{-n}^*)$, $(y, y_{-1}^*, ..., y_{-n}^*)$ considered above,
we have
\begin{equation}\label{dist}
\aligned
d_K(\hat{x^*}, \hat{y^*}) &= d(x, y) + \frac{d(x_{-1}^*, y_{-1}^*)}{K}
+.... \le \\
& \le d(x, y) + \frac{1}{\lambda_s K} d(x, y)+...+ \frac{1}{\lambda_s^n K^n}
d(x, y) + \frac{2 \Delta_0}{K^{n+1}}
\endaligned
\end{equation}
Assume that $K$ is fixed such that $K > \lambda_s^{-2}$ and such that
$M(\vp, \eta, K) < 1-\sup\limits_{\Lambda}|Df_s|$ for some $\vp <1$ and 
some $\eta < \vp_0$. Then , from (~\ref{largest}) and (~\ref{mvi}),
$\vp_0 < \lambda_s^{-n-1} d(x, y) < K^{n+1} d(x, y)$, which implies that
$\frac{1}{K^{n+1}} < \frac{d(x, y)}{\vp_0}$.
Introducing this inequality in (~\ref{dist}), one sees that there exists
a positive constant $C''$ such that for our chosen prehistories
$\hat{x^*}, \hat{y^*}$, of $x$, respectively $y$,
\begin{equation}\label{case2}
d_K(\hat{x^*}, \hat{y^*}) \le C'' d(x, y)
\end{equation}
By considering now both Case 1, (~\ref{case1}), and Case 2, (~\ref{case2}),
together with (~\ref{stablelip}), we obtain the Lipschitz continuity of
the stable spaces with respect to their base points -- i.e there exists
a positive constant $\Upsilon$ such that for all $x, y$ from $\Lambda$,
 $d(E^s_x, E^s_y) \le \Upsilon d(x, y)$. This implies immediately
that also $\phi^s$ is Lipschitz on $\Lambda$.

Now, we will prove the uniform Lipschitz continuity of the stable distribution 
and of $\phi^s$ along the stable leaves in the holomorphic case. We notice that, 
since $\Lambda$ is compact, one can construct
local stable manifolds of uniform size $r$ at all points of $\Lambda$, 
if $r>0$ is small enough.
If $y$ is a point in a manifold $W^s_r(x)$, but $y$ is not necessarily
in $\Lambda$, we shall call stable space at $y$, denoted by $E^s_y$, the 
tangent space at $W^s_r(x)$ at $y$.
We see that the spaces $E^s_y$ vary smoothly when $y$ moves inside $W^s_r(x)$
for $x$ fixed. So the existence of a constant $\Xi$ like in the statement
is conditioned only on the boundedness of the ``curvature''
of these local stable manifolds.  Assume then that there exists a sequence 
$z_n \in \Lambda$ such that the Lipschitz constants $L_n$ of the maps $g_n$
converge to infinity, where $g_n(y) := E^s_y, y \in W^s_r(z_n)$.
Since $\Lambda$ is compact, the sequence $(z_n)_n$ has at least one 
convergent subsequence and without loss of generality we can assume that 
this subsequence is again $(z_n)_n$ and $z_n \to z$. 
If $x$ is an arbitrary 
point in $\Lambda$, then $W^s_r(x)$ is an 
analytic disk which is given as the image of an analytic map $h_x$ from the 
unit disk $\Delta$ to $\bb C^2$. 
We denote by $h_n$ the map $h_{z_n}$, for $n$ positive integer.
But from the hyperbolicity condition, the analytic maps $h_x$ vary continuously in 
$x\in \Lambda$, hence also $h_n$ vary continuously in $n$.
The norm on $\Delta$ of the second derivative of $h_n$ bounds 
the Lipschitz constant $L_n$ of 
the map $g_n$, for all $n$.
Notice however that, since $h_n$ are holomorphic and vary continuously 
in $n$, also the second 
derivatives of the maps $h_n$ vary continuously in $n$. 
Therefore, since we assumed $z_n \to z \in \Lambda$, we obtain that $L_n$ 
are bounded by some finite positive constant $L$. So the map $y \to E^s_y$ 
is $L$-Lipschitz on $W^s_r(x), \forall x \in \Lambda$. Then, due to the 
smoothness of $f$,
there exists a small $r>0$ and $\Xi>0$ such that for any $x \in
\Lambda$ and any points $y, z \in W^s_r(x)$, we have 
$| \phi^s(y) - \phi^s(z) |
\le \Xi \cdot d(y,z)$. 

\end{proof}

\begin{pro}\label{proC}
Let $f:\bb P^2 \to \bb P^2$  holomorphic, with Axiom A and such that
$\mathcal{C}_f \cap \Lambda = \emptyset$ for a basic set $\Lambda$ of unstable 
index 1. Let also a prehistory $C$ of a point $x$ in $\Lambda$, with 
respect to $f$.
If $m := n(C)$, $C=( x, x_{-1}, ...,x_{-m})$ and 
$y$ is an arbitrary point in $\Lambda(C, \vp)$, with the corresponding
prehistory $(y, y_{-1},...,y_{-m})$ $\vp$-shadowed by $C$, then we have:
$\frac{1}{C_1} \le \frac{|Df_s^m(y_{-m})|}{|Df_s^m(x_{-m})|} < C_1,$
where $C_1 > 1$ is a constant independent of $m$ and $C$.
\end{pro}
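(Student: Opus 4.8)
The plan is to turn the multiplicative estimate into an additive one and then use the exponential shadowing that hyperbolicity forces on the two chains of preimages. Since $f$ is conformal on the stable bundle, $|Df_s^m(x_{-m})|=\prod_{i=1}^{m}|Df_s(x_{-i})|$ and likewise $|Df_s^m(y_{-m})|=\prod_{i=1}^{m}|Df_s(y_{-i})|$, so, taking logarithms,
\[
\log\frac{|Df_s^m(y_{-m})|}{|Df_s^m(x_{-m})|}=\sum_{i=1}^{m}\bigl(\phi^s(y_{-i})-\phi^s(x_{-i})\bigr),
\]
and it suffices to bound $\sum_{i=1}^{m}|\phi^s(y_{-i})-\phi^s(x_{-i})|$ by a constant independent of $m$ and of $C$. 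By Theorem \ref{thm0} the function $\phi^s$ is Lipschitz on $\Lambda$, say with constant $L$ (and with a uniform constant along the local stable leaves, which also covers the case in which the shadowing points lie slightly off $\Lambda$); hence this sum is at most $L\sum_{i=1}^{m}d(x_{-i},y_{-i})$.

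It thus remains to prove $\sum_{i=1}^{m}d(x_{-i},y_{-i})\le C_0$ with $C_0$ independent of $m$ and $C$. To this end I would read the two prehistories forwards: set $a:=x_{-m}$ and $b:=y_{-m}$, so that $f^{j}(a)=x_{-(m-j)}$ and $f^{j}(b)=y_{-(m-j)}$ for $0\le j\le m$, and the $\vp$-shadowing hypothesis says exactly that $d(f^{j}a,f^{j}b)<\vp$ for all $0\le j\le m$. Working in charts adapted to the splitting $E^s\oplus E^u$ along the forward orbit of $a$ — any prehistory of $a$ may be fixed once to define the unstable spaces, since they are only ever pushed forward by $Df$ — one decomposes the difference $e_j$ between $f^{j}a$ and $f^{j}b$ into its stable and unstable parts. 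The standard hyperbolic estimates, uniform over $\Lambda$ by compactness and valid once $\vp$ and $r$ are small, yield constants $B>0$ and $\kappa\in(0,1)$ depending only on $f$ and $\Lambda$ such that
\[
d(f^{j}a,f^{j}b)\le B\,\vp\,\bigl(\kappa^{\,j}+\kappa^{\,m-j}\bigr),\qquad 0\le j\le m,
\]
the two terms coming respectively from the forward contraction of the stable component (at the rates $\lambda'$ from the definition of hyperbolicity, after passing to an adapted metric) and the backward contraction of the unstable component along the shadowing segment. Re-indexing by $i=m-j$ and summing two geometric series,
\[
\sum_{i=1}^{m}d(x_{-i},y_{-i})=\sum_{j=0}^{m-1}d(f^{j}a,f^{j}b)\le B\,\vp\sum_{j=0}^{m-1}\bigl(\kappa^{\,j}+\kappa^{\,m-j}\bigr)\le\frac{2B\,\vp}{1-\kappa}=:C_0 .
\]
Taking $C_1:=e^{L C_0}$ (enlarged slightly if one insists on the strict inequality) then gives $\tfrac1{C_1}\le\frac{|Df_s^m(y_{-m})|}{|Df_s^m(x_{-m})|}\le C_1$ with $C_1>1$ independent of $m$ and $C$, as claimed.

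The main obstacle is the displayed shadowing bound $d(f^{j}a,f^{j}b)\le B\vp(\kappa^{\,j}+\kappa^{\,m-j})$. This is the familiar local–product–structure (expansivity) estimate for a hyperbolic set, but the non-invertible framework requires a little care: the unstable subspaces entering the stable/unstable decomposition depend on prehistories, so one fixes an arbitrary prehistory of $a=x_{-m}$ once and for all and transports the associated unstable spaces forward by $Df$; since the argument uses only forward iteration of $f$, this causes no difficulty. The remaining ingredients — the conformal product formula for $|Df_s^m|$, the Lipschitz bound from Theorem \ref{thm0}, and the summation of the geometric series — are routine.
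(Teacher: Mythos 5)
Your proof is correct and follows essentially the same route as the paper's: reduce the multiplicative bound to the Birkhoff-sum difference, use the Lipschitz continuity of $\phi^s$ from Theorem~\ref{thm0}, and sum the shadowing distances via their two-sided geometric decay. The paper realizes that decay a bit more concretely by introducing the bracket point $z\in W^u_\vp(\hat x_{-m})\cap W^s_\vp(y_{-m})\subset\Lambda$ and splitting $d(x_{-i},y_{-i})\le d(x_{-i},f^{m-i}z)+d(f^{m-i}z,y_{-i})$, with each term bounded by $\tilde c\gamma^{m-i}$ and $\tilde c\gamma^{i}$ respectively, which is exactly the local-product-structure/expansivity estimate you invoke abstractly in adapted charts.
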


\begin{proof}
From the fact that $(y, ..., y_{-m})$ is an $m$-prehistory of $y$ in 
$\Lambda$ we know in particular that $y_{-m} \in \Lambda$, hence 
there exists a local stable manifold 
through $y_{-m}$ of size $\vp$. Let us take also $\hat {x}$ be any complete 
prehistory in $\Lambda$ of 
$x$, starting with $(x,x_{-1},...,x_{-m})$. Set $\hat x_{-m}:=
\hat f^{-m}(\hat x)$. In this case
$W^u_{\vp}(\hat x_{-m})$ intersects $W^s_{\vp}(y_{-m})$ in a unique point $z$.
It follows from the local product structure of $\Lambda$ that $z$ belongs
to $\Lambda$. 
From the fact that $y$ belongs to $\Lambda(C,\vp)$ and $(y,..., y_{-m})$ is
its prehistory $\vp$-shadowed by $C$, we know that 
$d(f^ix_{-m}, f^iy_{-m})<\vp$ for all $i=0,1,...,m$. 
Also from the fact that $z\in W^s_\vp(y_{-m})$ it follows that $d(f^iz,
f^iy_{-m})<\vp$ for all $i=0,1,...,m$. From the last two inequalities we get 
that 
$d(f^i x_{-m}, f^i z) < 2\vp$ for all $i=0,1,...,m$. But, since 
$z \in W^u_\vp(\hat x_{-m}) \cap W^s_\vp(y_{-m})$, we have that there 
exist constants $\tilde c >0$ and 
$\gamma \in (0,1)$ such that for all $i=0,1,...,m$, 
\begin{equation}\label{laminated}
d(f^i x_{-m}, f^i z) < \tilde c \gamma^{m-i} \ \text{and}\ 
d(f^i y_{-m}, f^i z) < \tilde c \gamma^{i}.
\end{equation}
Now from Theorem~\ref{thm0}, $\phi^s(y)$ depends 
Lipschitz continuously on $y \in \Lambda$. 
This, together with (\ref{laminated}), implies that there exists a 
constant $K' > 0$ such that: 
$\left|\sum_{j = 0}^m \phi^s(y_{-j}) - \sum_{j=0}^m \phi^s(x_{-j})\right| \le 
 \left|\sum_{j = 0}^m \phi^s(y_{-j})-\sum_{j = 0}^m \phi^s(f^{m-j}z)\right| +
\left|\sum_{j = 0}^m \phi^s(f^{m-j}z)-\sum_{j = 0}^m \phi^s(x_{-j})\right| 
\le K'( \sum_{j=0}^m d(y_{-j}, f^{m-j}z) + \sum_{j=0}^m d(f^{m-j}z, 
x_{-j})) \le 2K'\tilde c \cdot \sum_{j=0}^m \gamma^j < K''$, where $K''$ is a constant independent of $m$ and $\vp$.
Hence the statement of the proposition follows immediately from the previous 
inequalities.
\end{proof}

\begin{pro}\label{pro4}
Let $f:\bb P^2 \to \bb P^2$  holomorphic, with Axiom A and such that
$\mathcal{C}_f \cap \Lambda = \emptyset$ for a basic set $\Lambda$ of unstable 
index 1. Denote $\chi_u := \mathop{\sup}\limits_\Lambda |Df_u|$.

(a) Then we have that $t^s_n(\vp) \ge t^s_{np}(\vp)$ and that $t^s = 
t^s_n$, for any positive integers $n, p$ and any $\vp >0$. 

(b) For $\vp < \vp_0$, and
$\rho$ an arbitrary number in the interval $(0, \chi_u^{-1})$, 
denote by $\rho_n:= \vp \cdot \rho^n, n >1$. Then $P^-_{f^n}(t\phi^s_n, \rho_n) = P^-_{f^n}(t\phi^s_n)$, for
any $t$; consequently $t^s_n(\rho_n) = t^s_n = t^s, n >1$.

\end{pro}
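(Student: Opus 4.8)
The key is to compare the inverse pressure of $f$ with that of its iterates. Because $f$ is conformal along stable manifolds, $\phi^s_{np}(y)=\phi^s_n(y)+\phi^s_n(f^ny)+\cdots+\phi^s_n(f^{n(p-1)}y)$; i.e.\ $\phi^s_{np}$ is the Birkhoff sum of $\phi^s_n$ over $p$ iterates of $f^n$, and in particular $\phi^s_p$ is the Birkhoff sum of $\phi^s=\phi^s_1$ over $p$ iterates of $f$. So I want a power rule for $P^-$: writing $g:=f^n|_\Lambda$ and $\psi^{(p)}:=\psi+\psi\circ g+\cdots+\psi\circ g^{p-1}$, the equality $P^-_{g^p}(\psi^{(p)})=p\,P^-_g(\psi)$ together with suitable one-sided $\eta$-scale versions. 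The bridge: an $m$-prehistory $C=(x,x_{-1},\dots,x_{-m})$ for $g^p$ is the same datum as a $pm$-prehistory $\tilde C$ of $x$ for $g$, obtained by inserting the forced intermediate preimages $g^{p-1}x_{-i},\dots,gx_{-i}$; along this correspondence $\bigl|S^-_m\psi^{(p)}(C)-S^-_{pm}\psi(\tilde C)\bigr|\le p\|\psi\|$ (here $\psi=t\phi^s_n$ is bounded, as $\mathcal{C}_f\cap\Lambda=\emptyset$), and, denoting by $X^g,X^{g^p}$ the shadowing sets relative to $g$ and $g^p$,
$$X^g(\tilde C,\eta)\ \subseteq\ X^{g^p}(C,\eta)\ \subseteq\ X^g(\tilde C,K_p\eta).$$
The left inclusion (same scale) is immediate, since $g^p$-shadowing constrains only the indices that are multiples of $p$; the right one holds because each of the at most $p-1$ filled-in forward iterates amplifies an $\eta$-small displacement based near $\Lambda$ by at most $\sup_\Lambda|Dg_u|$ (its stable part actually contracting), so $K_p$ may be taken of the form $\mathrm{const}\cdot(\sup_\Lambda|Dg_u|)^{p-1}$ — a constant depending on $p$ but not on $\eta$, $m$, $C$. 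Transferring, as usual, a cover on one side to a cover on the other (truncating prehistory lengths to multiples of $p$ for the left inclusion) and controlling the weights by the displayed distortion bound, the left inclusion gives $P^-_{g^p}(\psi^{(p)},\eta)\le p\,P^-_g(\psi,\eta)$ and the right one gives $P^-_{g^p}(\psi^{(p)},\eta)\ge p\,P^-_g(\psi,K_p\eta)$, for every $\eta>0$; letting $\eta\to0$ (so $K_p\eta\to0$) these collapse to the power rule $P^-_{g^p}(\psi^{(p)})=p\,P^-_g(\psi)$.

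Part (a) follows at once. In the first $\eta$-scale inequality take $g=f^n$, the power $p$, $\eta=\vp$, and $\psi=t^s_n(\vp)\,\phi^s_n$; since $P^-_{f^n}(t^s_n(\vp)\phi^s_n,\vp)=0$ by definition of $t^s_n(\vp)$, we get $P^-_{f^{np}}\!\bigl(t^s_n(\vp)\,\phi^s_{np},\vp\bigr)\le 0$, and because $t\mapsto P^-_{f^{np}}(t\phi^s_{np},\vp)$ is strictly decreasing (Proposition \ref{pro2}(vi)) with unique zero $t^s_{np}(\vp)$, this forces $t^s_n(\vp)\ge t^s_{np}(\vp)$. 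Applying the power rule with $g=f$, arbitrary $p$, and $\psi=t\phi^s$ gives $P^-_{f^p}(t\phi^s_p)=p\,P^-_f(t\phi^s)$ for every $t$, so the strictly decreasing functions $t\mapsto P^-_{f^p}(t\phi^s_p)$ and $t\mapsto P^-_f(t\phi^s)$ have the same zero, i.e.\ $t^s_p=t^s$ for all $p$.

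For part (b), $P^-_{f^n}(t\phi^s_n,\rho_n)\le P^-_{f^n}(t\phi^s_n)$ is automatic (monotonicity of the $\eta$-inverse pressure in $\eta$, the full pressure being the supremum over scales). For the reverse, apply the second inequality with $g=f$, $p=n$, $\psi=t\phi^s$, at $\eta=\rho_n=\vp\rho^n$: here $\sup_\Lambda|Dg_u|=\chi_u$, so $P^-_{f^n}(t\phi^s_n,\rho_n)\ge n\,P^-_f(t\phi^s,K_n\rho_n)$ with $K_n=\mathrm{const}\cdot\chi_u^{\,n-1}$. Now $K_n\rho_n=\mathrm{const}\cdot\vp\rho\,(\chi_u\rho)^{n-1}$, and the hypothesis $\rho<\chi_u^{-1}$ means $\chi_u\rho<1$, hence $(\chi_u\rho)^{n-1}\le1$ and $K_n\rho_n\le\mathrm{const}\cdot\vp\rho$; choosing $\vp<\vp_0$ small enough (shrinking $\vp_0$ if necessary) this stays below the fixed scale $\delta_*>0$ past which the $\eta$-inverse pressure of $f$ on $\Lambda$ has already reached its limiting value, $P^-_f(\,\cdot\,,\delta)=P^-_f(\,\cdot\,)$ for $\delta<\delta_*$ (a finite-scale property valid because distinct inverse branches of $f$ inside $\Lambda$ are separated by a fixed amount). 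Therefore $P^-_{f^n}(t\phi^s_n,\rho_n)\ge n\,P^-_f(t\phi^s)=P^-_{f^n}(t\phi^s_n)$ by the power rule, so equality holds for every $t$; comparing zeros and using part (a), $t^s_n(\rho_n)=t^s_n=t^s$.

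The one genuinely non-routine step is the right inclusion $X^{g^p}(C,\eta)\subseteq X^g(\tilde C,K_p\eta)$: one must track, through the filled-in intermediate iterates, that a displacement of size $<\eta$ near $\Lambda$ is amplified only by the hyperbolic factor $(\sup_\Lambda|Dg_u|)^{p-1}$ (with the stable direction contracting), which is what makes $K_p$ a power of $\sup_\Lambda|Dg_u|$ and, in part (b), is exactly why the assumption $\rho<\chi_u^{-1}$ — forcing $(\chi_u\rho)^{n-1}\le 1$ — keeps $K_n\rho_n$ uniformly small in $n$. Everything else is bookkeeping: transferring the covering sums between the two time scales using boundedness of $\phi^s$ and the bounded-distortion estimate of Proposition \ref{proC} (which rests on the Lipschitz continuity of $\phi^s$ from Theorem \ref{thm0}), together with the finite-scale stabilization of $P^-_f$ invoked above.
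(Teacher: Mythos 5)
Your part (a) is correct and follows essentially the same route as the paper: the ``left inclusion'' (subsampling every $p$-th term of a prehistory) gives $t^s_n(\vp)\ge t^s_{np}(\vp)$, and the ``right inclusion'' (filling in the intermediate forward iterates, at the cost of dilating the scale) gives the reverse direction after letting the scale go to zero; the paper even records in passing the same power rule $P^-_{f^n}(t\phi^s_n)=nP^-_f(t\phi^s)$ that you extract. So (a) is fine.

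Part (b) has a genuine gap. After establishing $P^-_{f^n}(t\phi^s_n,\rho_n)\ge nP^-_f(t\phi^s,K_n\rho_n)$ with $K_n\rho_n\le \mathrm{const}\cdot\vp\rho$ uniformly in $n$, you invoke a ``finite-scale stabilization'' $P^-_f(t\phi^s,\delta)=P^-_f(t\phi^s)$ for all $\delta<\delta_*$, and justify it only by saying that distinct inverse branches of $f$ inside $\Lambda$ are a fixed distance apart. That is not an adequate justification: separation of branches at best tells you that each $\Lambda(C,\delta)$ picks out a unique chain of preimages, but it says nothing about how the weights $\exp(S^-_{n(C)}t\phi^s(C))$ behave when you refine a cover from scale $\delta$ to a finer scale $\delta'$. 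Unless the variation of $|Df^m_s|$ along $\delta$-shadowed prehistories is controlled \emph{uniformly in $m$}, the refined weights can differ arbitrarily from the coarse ones and the limit $P^-_f(\cdot,\vp')\to P^-_f(\cdot)$ cannot be matched at a fixed positive scale. The missing ingredient is precisely the bounded-distortion estimate of Proposition~\ref{proC}, combined with a covering argument showing that each $\Lambda(C,\rho_n)$ is covered by a bounded number (independent of the length $n(C)$) of sets $\Lambda(C',\vp')$ with comparable weights. This is exactly what the paper does, and it does it directly at the level of $f^n$-prehistories, using that $\rho_n=\vp\rho^n$-shadowing for $f^n$ forces $\vp$-shadowing for $f$ over the filled-in iterates (so that Proposition~\ref{proC} applies), rather than taking a detour through the $n=1$ case. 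Put differently: the stabilization you assert is true, but it is itself (the $n=1$ instance of) the statement being proved, so it must be established via Proposition~\ref{proC} and a subdivision argument, not assumed.
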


\begin{proof}

(a) First we make the following notations.
If $m$ is a positive integer, denote by 
$\mathcal{C}^n_m:= \{(y, y^n_{-1},..., y^n_{-m}) \in \Lambda^{m+1}, \
\text{such that} \ f^n(y^n_{-i}) = y^n_{-i+1}, i=1,..,m, \text{and} \ y_0=y \}$.
Let also $\mathcal{C}^n_*:= \mathop{\cup}\limits_{m\ge 0} \mathcal{C}^n_m$
be the set of prehistories of finite length for $f^n$ in $\Lambda$.
Now, if $n$, $p$ and $\vp>0$ are fixed, we consider an arbitrary number 
$t \in ( t^s_n(\vp), t^s_n(\vp)+1)$.
From the definition of $t^s_n(\vp)$, we get that, for $N$ large, there 
exists an $\vp$-covering $\Gamma$ of $\Lambda$, $\Gamma
\subset \mathcal{C}^n_*$ with $n(C) \ge N, \forall C \in \Gamma$ and:
\begin{equation}\label{eq1}
\sum_{C\in \Gamma} exp(S^-_{n(C)}(t \phi^s_n(C))) < 
exp(-(t^s_n(\vp)+1) n(2p-1)\sup\limits_{\Lambda}|\phi^s|)
\end{equation}
For every $C \in \Gamma$, let us divide $n(C)$ by $p$, and obtain 
$n(C) = p\cdot m(C)+k(C)$, where $0 \le k(C) < p$.
If $C = (y, y^n_{-1}, ..., y^n_{-n(C)})$, then denote by $C'$ the 
$m(C)$-prehistory of $y$ with respect to $f^{np}$ given by $C' = 
(y, z^{np}_{-1}, ..., z^{np}_{-m(C)})$, where $z^{np}_{-1} := 
y^n_{-p}, ..., z^{np}_{-m(C)}:= y^n_{-p m(C)}$.
Then it is easy to see that $\Lambda(C, \vp) \subset \Lambda(C', \vp)$, 
for all $C \in \Gamma$. 
Denote by $\Gamma'$ the collection of all the prehistories $C'$ associated 
by the above procedure to the prehistories $C$ from $\Gamma$.
We calculate now the consecutive sum 
$
S^-_{n(C)} \phi^s_n (C) 
=\phi^s_n(y) + ...+ \phi^s_n(y^n_{-m(C)p})+\phi^s_n(y^n_{-m(C)p-1})+...+
\phi^s_n(y^n_{-n(C)}) 
= \log |Df_s^{n(pm(C)+1)}(y^n_{-m(C)p})| + \log |Df_s^{nk(C)}(y^n_{-n(C)})|.$
On the other hand
$$
\aligned
S^-_{m(C)}\phi^s_{np}(C') 
&= \phi^s_{np}(y)+...+\phi^s_{np}(z^{np}_{-m(C)}) \\
&= \phi^s(y^n_{-m(C)p}) + \phi^s(f y^n_{-m(C)p}) +...+\phi^s(y)+\phi^s(fy)+...+
\phi^s(f^{np-1} y) \\
&= \log |Df_s^{np(m(C)+1)}(y^n_{-m(C)p})|.
\endaligned
$$
These last two relations show that 
$S^-_{n(C)}\phi^s_n (C) 
= S^-_{m(C)}
\phi^s_{np}(C') + \log |Df_s^n(y)| + \log |Df_s^{nk(C)}(y^n_{-n(C)})| - 
\log|Df_s^{np}(y)|.$
Using that $k(C) < p$ and the last equality, we obtain that
$$
|S^-_{n(C)}\phi^s_n (C) - S^-_{m(C)} \phi^s_{np}(C') | \le  
 n (p-1) \cdot \sup\limits_{\Lambda} |\phi^s| + 
|\log |Df_s^{nk(C)}(y^n_{-n(C)})|| \le n(2p-1)\cdot \sup\limits_{\Lambda}
|\phi^s|
$$
Therefore 
\begin{equation}
\aligned
\inf &\{ \mathop{\sum}\limits_{C'\in \Gamma'}  exp(S^-_{m(C)}(t \phi^s_{np}
(C'))), \Gamma' \subset \mathcal{C}^{np}_* \vp - \text{covers} \ 
\Lambda \} \le \\ 
& \le [\mathop{\sum}\limits_{C\in \Gamma} 
exp(S^-_{n(C)}(t \phi^s_n(C)))] \cdot 
exp(tn(2p-1)\sup\limits_{\Lambda} |\phi^s| ) < 1.
\endaligned
\end{equation}
The last inequality follows since $t <  t^s_n(\vp)+1$ and from the 
way we chose $\Gamma$ in the begining of the proof.
But from the definition of $P^-_{np}$, we obtain then that 
$t \ge t^s_{np}(\vp)$.
However since $t$ was taken arbitrarily in the finite interval 
$(t^s_n(\vp), t^s_n(\vp)+1)$, it follows that $t^s_n(\vp) \ge t^s_{np}(\vp)$.
The inequality $t^s(\vp) \ge t^s_n(\vp)$ implies that $t^s \ge t^s_n$, $n \ge 1$. We want to prove now the opposite inequality, i.e $t^s \le t^s_n$
(actually the same proof shows more generally, that  
$P^-_{f^n}(t\phi^s_n) = n P^-_f(t\phi^s)$).
Indeed, let us consider an arbitrary $t > t^s_n$, for a fixed integer $n$.
For a given $\vp>0$, let $\bar\vp_n>0$ satisfying the following conditions: for any $y, z$ with 
$d(y,z) < \bar\vp_n$ we have $d(f^jy, f^jz) < \vp, 0 \le j \le n$, and also
 $P^-_{f^n}(t\phi^s_n, \bar\vp_n) < 0$. Hence for all $m$ large, there exists an $(m,\bar\vp_n)$-cover 
$\Gamma^n_m$ of $\Lambda$ (i.e $\Gamma^n_m$ is a collection of $m$-prehistories $C'$ with respect to $f^n$, so 
that 
$\Lambda = \mathop{\cup}\limits_{C' \in \Gamma^n_m}\Lambda(C', \bar\vp_n)$),
satisfying: \ $\mathop{\sum}\limits_{C' \in \Gamma^n_m} e^{S^-_m(t\phi^s_n)(C')} < 1$. Now, out of every $C'$ we will form a prehistory $C$ with respect to $f$ in the canonical way, i.e if $C' = (y, y_{-n}, ..., y_{-nm})$, then $C=(y, f^{n-1}y_{-n}, ..., y_{-n}, ..., f(y_{-nm}), y_{-nm})$.
Also, from the condition satisfied by $\bar\vp_n$, we see that $\Lambda(C', \bar\vp_n) \subset \Lambda(C, \vp)$; so, if $\Gamma_{nm}$ denotes the collection of prehistories $C$ of length $nm$ (with respect to $f$) obtained as above from the prehistories $C'$ of $\Gamma^n_m$, we obtain that $\Gamma_{nm}$ is an $(nm, \vp)$ cover of $\Lambda$.
Moreover, as found above, $S^-_{nm}(t\phi^s)(C) = S^-_m(t\phi^s_n)(C') + \log|Df_s(y)| - \log|Df_s^n(y)|$.
These facts imply  
that $\mathop{\sum}\limits_{C \in \Gamma_{nm}} e^{S^-_{nm}(t\phi^s)(C)} < M_n$, where $M_n$ is a constant depending only on $n$. Therefore if we let $m \to \infty$ (and keep $n$ fixed), we see that 
$P^-_f(t\phi^s, \vp) \le 0 \Rightarrow t \ge t^s(\vp)$. 
But $0<\vp<\vp_0$ was arbitrary and $t$ was taken arbitrarily larger than $t^s_n$, hence
  $t^s_n \ge t^s$.
This proves the equality $t^s = t^s_n, n \ge 1$.

(b)  First from the proof of Proposition \ref{proC} we know that for all $m \ge 1$, and prehistory $(x, x_{-1}, ..., x_{-m})$ of $x$ in $\Lambda$, $\frac{1}{C_1(\vp)} \le \frac{|Df_s^m(y_{-m})|}{|Df_s^m(x_{-m})|} \le C_1(\vp)$, for $(y, y_{-1}, ..., y_{-m})$ an $m$-prehistory of $y$, $\vp$-shadowed by $(x, x_{-1}, ..., x_{-m})$.
The proof of Proposition \ref{proC} implies also that $C_1(\vp) \le C_2 \cdot \vp, 0 < \vp < \vp_0$, for some constant $C_2>0$.
Let us consider now the situation for $f^n$ for some fixed $n \ge 1$.
Consider $(x, x_{-n}, ..., x_{-np})$ a $p$-prehistory of $x$ in $\Lambda$  (with respect to $f^n$), and let 
$(y, y_{-n}, ..., y_{-np})$ be another $p$-prehistory in $\Lambda$ which is $\rho_n$-shadowed by 
$(x, x_{-n}, ..., x_{-np})$.
Then, if $d(y_{-np}, x_{-np}) < \rho_n < \vp \rho^n$, we get that $d(f^j(y_{-np}), f^j(x_{-np})) < \vp, 0 \le j\le n$, and similarly we obtain that $d(f^j(y_{-np}), f^j(x_{-np})) < \vp, 0 \le j\le np$.
Therefore the $np$-prehistory with respect to $f$, $(y, y_{-1}, ...., y_{-np})$ is $\vp$-shadowed by $(x, x_{-1}, ..., x_{-np})$.
So we can apply Proposition \ref{proC} in this case to obtain similar inequalities for prehistories of $f^n$:
\begin{equation}\label{n-iterate}
\frac{1}{C_1(\vp)} \le \frac{|Df_s^{np}(y_{-np})|}{|Df_s^{np}(x_{-np})|} \le C_1(\vp),
\end{equation}
for any $p \ge 1$.
Next, take $C$ an arbitrary $p$-prehistory in $\Lambda$, with respect to $f^n$, for $n$ fixed.
If $\vp'$ is an arbitrary number in the interval $(0, \rho_n)$, we see that the set $\Lambda(C, \rho_n)$ can be covered with at most $(\frac{\rho_n C_1(\vp)}{\vp'})^4$ sets of the form $\Lambda(C', \vp')$, where $C'$ are $p$-prehistories with respect to $f^n$.
Thus, recalling the definition of $P^-_{f^n}(t\phi^s_n, \rho_n), P^-_{f^n}(t\phi^s_n, \vp')$ and 
inequality (\ref{n-iterate}), we conclude that: 
$P^-_{f^n}(t\phi^s_n, \rho_n) = P^-_{f^n}(t\phi^s_n, \vp') = P^-_{f^n}(t\phi^s_n)$.
The last equality above follows from the fact that $P^-_{f^n}(t\phi^s_n, \vp') \to 
P^-_{f^n}(t\phi^s_n)$ when $\vp' \to 0$.
Hence, recalling also the conclusion of part (a), we get $t^s_n(\rho_n) = t^s_n = t^s, n >1$.
\end{proof}

\section{Estimates from above and below for the stable dimension in the general holomorphic case using the inverse pressure of iterates}

Given a map $f$ and a basic set $\Lambda$ as in Proposition \ref{proC}, define 
$\lambda_s:= \inf\limits_{\omega 
\in \Lambda}|Df_s(\omega)|$ and $\chi_s := \sup\limits_{\omega \in \Lambda}
|Df_s(\omega)|$. Remark that $\lambda_s > 0$ since we assumed that $\Lambda \cap \mathcal{C}_f = \emptyset$. For every positive integer $n$ and small positive number $\vp$, let 
$t^s_n(\vp)$ (respectively $t^s_n$) be the unique zero of the function 
$t \to P^-_{f^n}(t\phi^s_n, \vp)$ (respectively $t \to P^-_{f^n}(t\phi^s_n)$), where $\phi^s_n(y):=
\log |Df^n_s(y)|, y \in \Lambda$.
 
\begin{thm}\label{thm1}
Let $f:\bb P^2 \to \bb P^2$ be a holomorphic non-degenerate map with Axiom A and 
$\Lambda$ a basic set of $f$ with unstable index 1. Assume also that the 
critical set of $f$, $\mathcal{C}_f$ does not intersect $\Lambda$. 

(a) Then for every $x\in \Lambda$, we have 
$\delta^s(x) \le t^s_n(\rho_n) = t^s$, where $\rho_n >0$ are small numbers 
of the form $\vp\rho^n, n \ge 1$, where $\chi_u := \sup\limits_\Lambda |Df_u|$, $\rho>0$ is an arbitrary number smaller than $\chi_u^{-1}$, and $\vp < \text{min}\{\vp_0, r_0\}$.

(b) For all positive numbers $\vp< \vp_0$, and $\eta >0$, we get $\delta^s(x) + \eta \ge
t^s_{n}(\vp)$, where $ n \ge n(\vp, \eta)$ and $n(\vp, \eta)$ is a positive integer satisfying 
$n(\vp, \eta) > \frac{4\log \frac {1}{\vp}}{\eta \cdot \log \chi_s^{-1}}$.
In particular, if $\eta = \vp$ small enough, we get $t + \vp \ge t^s_n(\vp)$, for $n \ge (\frac{1}{\vp})^{1.1}$.

\end{thm}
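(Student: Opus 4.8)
The plan is to prove the two bounds separately, in each case reducing a Hausdorff–measure estimate for $W^s_r(x)\cap\Lambda$ to the inverse pressure of $f^n$ with the potential $\phi^s_n=\log|Df^n_s|$, the bridge being a bounded–distortion estimate in the spirit of Proposition \ref{proC}.

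For part (a) it suffices, by Proposition \ref{pro4}(b), to prove $\delta^s(x)\le t$ for every $t>t^s=t^s_n(\rho_n)$. Fix such a $t$. Since $\phi^s_n$ is strictly negative, $t\mapsto P^-_{f^n}(t\phi^s_n,\rho_n)$ is strictly decreasing (Proposition \ref{pro2}(vi)) with zero at $t^s$, so $P^-_{f^n}(t\phi^s_n,\rho_n)<0$, hence $M^-_{f^n}(0,t\phi^s_n,\rho_n)=0$: for every $N$ there is a $\rho_n$-cover $\Gamma\subset\mathcal{C}^n_*$ of $\Lambda$ by $f^n$-prehistories $C$ with $n(C)\ge N$ and $\sum_{C\in\Gamma}\exp\bigl(S^-_{n(C)}(t\phi^s_n)(C)\bigr)$ as small as we like. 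The geometric input — to be established from uniform hyperbolicity together with Proposition \ref{proC} and Theorem \ref{thm0} — is the estimate
$$\text{diam}\bigl(\Lambda(C,\rho_n)\cap W^s_r(x)\bigr)\ \le\ C_3\,\rho_n\,|Df^{nm}_s(z_{-m})|,\qquad C=(z_0,\dots,z_{-m})\in\mathcal{C}^n_m ;$$
indeed any two points of this set lie on the single leaf $W^s_r(x)$, their $\rho_n$-shadowing $f^n$-prehistories follow the same local inverse branches (here $\rho_n<\vp<\vp_0$ is used), so at level $m$ they lie within $2\rho_n$ on a common stable leaf near $z_{-m}$, and pushing forward by $f^{nm}$ with bounded distortion along the stable disk gives the bound. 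Now $\{\Lambda(C,\rho_n)\cap W^s_r(x):C\in\Gamma\}$ covers $W^s_r(x)\cap\Lambda$, its members have diameter $\le C_3\rho_n\chi_s^{nN}\to0$, and since $\log|Df^{nm}_s(z_{-m})|=S^-_m\phi^s_n(C)-\phi^s_n(z_0)\le S^-_m\phi^s_n(C)+n\sup_\Lambda|\phi^s|$, the corresponding $t$-sum is at most $(C_3\rho_n)^t e^{tn\sup_\Lambda|\phi^s|}\sum_{C\in\Gamma}\exp\bigl(S^-_{n(C)}(t\phi^s_n)(C)\bigr)\to0$. Hence $\mathcal{H}^t(W^s_r(x)\cap\Lambda)=0$, so $\delta^s(x)\le t$, and $t\downarrow t^s$ finishes (a).

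For part (b), fix $\vp<\vp_0$, $\eta>0$ and $n\ge n(\vp,\eta)$; we may assume $t^s_n(\vp)>\eta$, as otherwise $\delta^s(x)+\eta\ge\eta\ge t^s_n(\vp)$. Take any $t<t^s_n(\vp)$. Then $P^-_{f^n}(t\phi^s_n,\vp)>0$, so $M^-_{f^n}(\lambda,t\phi^s_n,\vp)>0$ for some $\lambda>0$; equivalently, by the $\vp$-analogue of Proposition \ref{pro3}, for every $N$ and every $\vp$-cover $\Gamma$ of $\Lambda$ by $f^n$-prehistories of length $\ge N$ one has $\sum_{C\in\Gamma}\exp\bigl(S^-_{n(C)}(t\phi^s_n)(C)\bigr)\ge c_0\,e^{\lambda N}$ with $c_0>0$ independent of $N$. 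The plan is to convert this into a Frostman measure $\mu$ on $W^s_r(x)\cap\Lambda$ via a Moran-type construction whose pieces at scale $\delta$ are the sets $\Lambda(C,\vp)\cap W^s_r(x)$ with $C$ an $f^n$-prehistory truncated at the first level $m$ with $\vp|Df^{nm}_s|\approx\delta$; to each such piece one assigns the weight $\exp\bigl(S^-_m(t\phi^s_n)(C)\bigr)$ normalized by the total weight at that scale. Proposition \ref{proC} bounds the overlap multiplicity of pieces of a given scale, and the Mass Distribution Principle then gives $\delta^s(x)\ge t$. Transporting the weight lower bound from a $\vp$-cover of all of $\Lambda$ to one of the single stable slice $W^s_r(x)\cap\Lambda$ is done through the local product structure of $\Lambda$ and the fact (again via Proposition \ref{proC}) that $\phi^s_n$ is, up to a bounded multiplicative constant, constant along unstable leaves at scale $\vp$. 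Letting $t\uparrow t^s_n(\vp)$ yields $\delta^s(x)\ge t^s_n(\vp)-\eta$, the deficit $\eta$ being an artefact of the fixed resolution $\vp$; taking $\eta=\vp$ and noting $(1/\vp)^{1.1}>n(\vp,\vp)$ for $\vp$ small gives the last assertion.

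The main obstacle lies in part (b): proving the Frostman bound $\mu(B(y,\delta))\le C\,\delta^{t^s_n(\vp)-\eta}$ uniformly in $y$ and $\delta$ while the resolution $\vp$ remains fixed. Since the Moran pieces are resolved only down to scale $\vp$, the Frostman exponent is accurate only up to an error of order $\log(1/\vp)\big/\bigl(n\log\chi_s^{-1}\bigr)$ — this is exactly why one imposes $n>\tfrac{4\log(1/\vp)}{\eta\log\chi_s^{-1}}$, the exponent $4=\dim_{\bb R}\bb P^2\bb C$ entering through the cost of refining a $\vp$-ball into finer balls, as in the proof of Proposition \ref{pro4}(b). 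One must also establish a ``reverse'' of the geometric estimate of part (a) — that a controlled proportion of the $f^n$-preimage pieces meet $W^s_r(x)\cap\Lambda$ in a set of comparable size — which again rests on the local product structure of the basic set, and the distortion control of Proposition \ref{proC} has to be applied uniformly along the (now long) $f^n$-prehistories involved.
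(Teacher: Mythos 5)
For part (a) you take a different route from the paper: where the paper simply invokes the inequality $\delta^s(x)\le t^s$ from \cite{MU2} and combines it with Proposition \ref{pro4}, you sketch a direct covering argument built on the diameter estimate $\mathrm{diam}\bigl(\Lambda(C,\rho_n)\cap W^s_r(x)\bigr)\lesssim\rho_n\,|Df^{nm}_s(z_{-m})|$. That route is reasonable, and the estimate can indeed be justified along the lines you indicate by pulling two points of the intersection back along their $\rho_n$-shadowed prehistories to a common stable leaf near $z_{-m}$, pushing forward with the conformal stable derivative, and invoking Proposition \ref{proC}; but the estimate is the crux and you assert it rather than prove it.

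Part (b) is where there is a genuine gap, and also a structural mismatch with the paper. You propose to build a Frostman measure on $W^s_r(x)\cap\Lambda$ from the positivity of $P^-_{f^n}(t\phi^s_n,\vp)$ and then apply the Mass Distribution Principle; but you never construct the measure, and you end by listing as ``obstacles'' exactly the two steps that carry the whole weight of the theorem in your formulation: the Frostman bound at fixed resolution $\vp$, and the ``reverse'' geometric statement that prehistory pieces meet the slice in a controlled way. These are not loose ends; converting ``every $\vp$-cover of $\Lambda$ by $f^n$-prehistory sets has large total weight'' into a mass distribution supported on a single stable leaf is substantial and, as set up, unsubstantiated — the sets $\Lambda(C,\vp)$ are neither disjoint nor naturally nested, and they branch with the prehistory tree. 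The paper avoids any measure construction and runs the implication the other way: fix $t>\delta^s(x)=HD(W)$, so $\mathcal{H}^t(W)=0$ and $W$ admits arbitrarily efficient covers; transport $W$ back to a set $J_m\subset\Lambda$ that, by transitivity and local product structure, meets every local unstable manifold $W^u_\vp(\hat y)$ over a small ball $Y\subset\Lambda$; cover $J_m$ with balls $U_i$ of small $t$-sum; stretch each $U_i$ backward along $f^n$-prehistories of its points until its diameter reaches $\vp$ (say after $m_i$ steps of $f^n$) and then cover the resulting set $Y_i\subset\Lambda$ by at most $N_0(\vp)^{m_i}$ sets $\Lambda(C',\vp)$; finally bound $M^-_{f^n}(0,(t+\eta)\phi^s_n,Y,N,\vp)$ using $\mathrm{diam}\,U_i>\vp\,\exp(S^-_{m_i}\phi^s_n(C'))\lambda_s^n$ and Proposition \ref{proC}. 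The overcount $N_0(\vp)^{m_i}$ is absorbed precisely by the $\eta$-tilt, provided $N_0(\vp)\chi_s^{n\eta}<1$; since $N_0(\vp)\le(1/\vp)^{\beta_1}$ with $\beta_1$ close to $\overline{\dim}_B\Lambda\le4$, this is the real source of the constant $4$ in the threshold $n(\vp,\eta)>\frac{4\log(1/\vp)}{\eta\log\chi_s^{-1}}$ — not a refinement cost for $\vp$-balls as you suggest. The conclusion is $P^-_{f^n}((t+\eta)\phi^s_n,\vp)\le0$, i.e.\ $t+\eta\ge t^s_n(\vp)$, and letting $t\downarrow\delta^s(x)$ gives $\delta^s(x)+\eta\ge t^s_n(\vp)$ with no measure ever being built.
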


\begin{proof}
(a) According to Proposition \ref{pro4}, we have $t^s_n(\rho_n) = t^s$.
From the Theorem of \cite{MU2}, recalled also in the Introduction, we have that 
$\delta^s(x) \le t^s$. Hence $\delta^s(x) \le t^s_n(\rho_n), n >1$.

(b) We prove now the inequality $\delta^s(x) + \eta \ge 
 t^s_{n}(\vp)$ for $\vp>0$ small enough (to be determined next),
$\eta>0$ small, and $ n \ge n(\vp, \eta)$.

First let us notice that, from definition, $\delta^s(x) \le 2$.
Let us take an arbitrary $t$ with $\delta^s(x) < t < 3$. Recall also that $\vp_0$ has been introduced earlier as 
a positive constant so that we can apply the Mean Value Inequality 
for $f$ on balls of radius $\vp_0 (\inf\limits_{\Lambda}|Df_s|)^{-1}$, and
also such that $f$ is injective on balls of radius 
$\vp_0 (\inf\limits_{\Lambda}|Df_s|)^{-1}$ centered on $\Lambda$.

Consider now $N_0(\vp)$ to be the smallest cardinality of a covering of $\Lambda$ with balls of radius $\vp$.
Then if $\beta= \ovl{\text{dim}}_B(\Lambda)$ denotes the upper box dimension of $\Lambda$, and $\beta_0 < \beta < \beta_1$, we will have that $(\frac{1}{\vp})^{\beta_0} < N_0(\vp) < (\frac{1}{\vp})^{\beta_1}$, for $\vp>0$ small enough. 
With $\vp>0$ and $\eta>0$ fixed, consider $n(\vp, \eta)$ be the smallest positive integer $n$ such that 

\begin{equation}\label{eta}
N_0(\vp) \cdot \chi_s^{n\eta} < 1
\end{equation}

This is satisfied if, for example, $n(\vp, \eta) > \frac{4 \log \frac{1}{\vp}}{\eta\cdot \log \frac{1}{\chi_s}}$.
In the sequel we consider $\vp$ with 
$0< \vp < \text{min}\{ \vp_1/2, r, d(\Lambda, \mathcal{C}_f)/4 \}$.
We shall prove that, for such an $\vp$ and $\eta>0$, the inequality
$t + \eta > t^s_n(\vp)$ holds for $n \ge n(\vp, \eta)$. 

Define now a constant $0 < \tilde\alpha < 1$ which depends only on $f$ and on
$\Lambda$, such that for all $x' \in \Lambda$ and $0 < r' << 
\text{diam}\Lambda $ , we have that $W^s_{r'}(y')$ intersects $W^u_{r'}(\hat
z')$ for all points $y', z' \in B(x', \tilde\alpha r')$ and all 
prehistories $\hat z' \in \hat \Lambda$ of $z'$. The existence of such
a constant follows from the 
transversality of stable and unstable manifolds.  

Next let us cover the compact set $\Lambda$ with a finite number of balls 
$B(y_1, \tilde\alpha\vp/4), ..., B(y_s, \tilde\alpha\vp/4)$ which are 
centered at points of 
$\Lambda$. Let us choose one such ball and denote its intersection 
with $\Lambda$ by $Y$.

We will show now that there exists a positive integer $m$ such that all
local unstable manifolds $W^u_{\vp}(\hat y)$ intersect the set $f^{-m}(W)$,
for all prehistories $\hat y \in \hat \Lambda$ of all points $y \in Y$, where we recall that $W:= W^s_r(x) \cap \Lambda$.

Indeed, from the transitivity of $f$ on $\Lambda$, there exists a positive 
integer $m$ and a point $z \in Y \cap \Lambda$ such that
$f^m(z) \in B(x, \tilde\alpha\vp/2) \cap \Lambda$.
Take now a complete prehistory $\hat y \in \hat \Lambda$ of an arbitrary
point $y$ from $Y$.
From the fact that $Y$ is contained in a ball of radius $\tilde\alpha\vp/4$,
we can conclude that $W^s_{\vp/2}(z)\cap W^u_{\vp/2}(\hat y) \ne 
\emptyset$ and 
denote this intersection (which is a point) by $\xi$. From the local product 
structure $\xi$ belongs to $\Lambda$.
We have also that $f^m(\xi) \in W^s_\vp(f^m z) \cap \Lambda$.
Take now $\widehat{f^m \xi}$ to be the prehistory in $\Lambda$ of $f^m\xi$ 
given by $(f^m \xi,
f^{m-1}\xi,...,\xi, \xi_{-1}, ...)$, where $\hat \xi:= (\xi, \xi_{-1},...)$ 
is the
prehistory of $\xi$ $\vp/2$-shadowed by $\hat y$; such a prehistory of $\xi$
exists since $\xi \in W^u_{\vp/2}(\hat y)$.
So, we get that there exists a local unstable manifold 
$W^u_{\vp/2}(\widehat{f^m\xi})$ which intersects $W^s_{\vp/2}(x)$ in a 
point $\zeta$; again from
the local product structure, $\zeta \in \Lambda$ and since $\zeta \in 
W^s_{\vp/2}(x)$, we obtain that $\zeta \in W$.
If we consider $\zeta_{-m}$ the $m$-th preimage of $\zeta$ obtained from
the fact that $\zeta \in W^u_{\vp/2}(\widehat{f^m\xi})$, we will have 
$d(\zeta_{-m}, \xi) < \vp/2$.
Combining with the fact that $\hat \xi$ corresponds to a prehistory of
$\xi$ $\vp/2$-shadowed by $\hat y$, it follows that $\zeta_{-m} \in 
W^u_\vp(\hat y) \cap f^{-m}W$. We may denote the point $\zeta_{-m}$ also
by $\zeta_{-m}(\hat y)$ when we want to emphasize its dependence on $\hat y$.

Therefore, we proved that the set $f^{-m}W$ intersects all unstable manifolds
$W^u_\vp(\hat y)$ for all prehistories $\hat y \in \hat \Lambda$ of points
$y$ from $Y$.

From the fact that $\zeta \in W^u_{\vp/2}(\widehat{f^m\xi})$, it follows that
$d(\zeta_{-m}, \xi) < \vp/2, d(f\zeta_{-m}, f\xi) < \vp/2, ..., 
d(\zeta, f^m\xi) < \vp/2$. But $\xi \in \Lambda$ and $\Lambda$ is $f$-
invariant, hence
\begin{equation}\label{vecine}
d(\zeta, \Lambda) < \vp/2, ..., d(\zeta_{-m}, \Lambda) < \vp/2
\end{equation}

Let us denote by $J_m$ the set of these points $\zeta_{-m}(\hat y)$ obtained 
for all the prehistories $\hat y$ of points $y \in Y$. Relation \ref{vecine}, together with the fact that $\zeta \in \Lambda$ imply that $\zeta_{-m}(\hat y) \in \Lambda$, therefore $J_m \subset \Lambda$.
The relations in (~\ref{vecine}) imply also that $f^m$ is injective on a 
neighbourhood of $J_m$, since $\vp < d(\Lambda, \mathcal{C}_f)/4$ and 
$f^j(J_m) \cap \mathcal{C}_f = \emptyset, 
j = 0,...,m$. And, from our construction, $f^m(J_m) \subset W$. But from above
$f^m$ is injective on a neighbourhood of $J_m$ and it is bi-Lipschitz on
that neighbourhood, hence $HD(J_m) \le HD(W) = \delta^s(x)$. Recall also that
$t > \delta^s(x)$, so $t > HD(J_m)$.
This means that there exists $0< \gamma < \vp$, $\gamma$ small enough, 
and an open cover of $J_m$ with balls, $\mathcal{U} = (U_i)_{i \in I}$, such that  $\text{diam}U_i < \gamma$ and
\begin{equation}\label{Ui}
\sum\limits_{i \in I} (\text{diam}U_i)^t < \vp^{t+1} \cdot \lambda_s^{4n}\chi_s^n,
\end{equation}
for a fixed $n$, $n \ge n(\vp)$.

Let us choose now an arbitrary $i \in I$ and assume that $\text{Card}
(U_i \cap J_m) > 1$. 
Let us denote by $Y_i$ the set of points $y$ of $Y$ which have some 
prehistory $\hat y$ with $W^u_\vp(\hat y) \cap 
J_m \cap U_i \ne \emptyset$; denote by $F_i$ the set of prehistories
$\hat y \in \hat\Lambda$ with this property.

For each point $z' \in  U_i\cap J_m$, there exists then a point $y \in 
Y_i$ and a prehistory $\hat y \in \hat \Lambda$ such that $z' \in W^u_{\vp}
(\hat y)$, and actually $z' = \zeta_{-m}(\hat y)$. 
Therefore $z'$ has a prehistory $\hat z'$ given by that procedure, i.e
which is $\vp$-shadowed by $\hat y$; this prehistory may also be denoted
by $\hat z'(\hat y)$ if we want to emphasize its dependence on $\hat y$.
Let also $F'_i := \{\hat z'(\hat y), \hat y \in F_i\}$. 
Let us now take a prehistory $\hat z' \in F'_i$.
Since $\vp$ was assumed sufficiently small, we can define local branches of 
$f^{-1}$ on balls of radius $\vp$. Let us denote by $f^{-1}_*$ the branch 
of $f^{-1}$ defined on $B(z', \vp)$ such that $f^{-1}_*(z')=z'_{-1}$.
It may happen that the diameter of $f^{-1}_* U_i$ increases.
In case $\text{diam}f^{-1}_*U_i < \vp$, define afterwards the inverse 
iterate $f^{-2}_*$ such that $f^{-2}_*(z')
=z'_{-2}$, etc. Let us denote by $n_i(\hat z')$ the largest integer $n'$ 
which is 
a multiple of $n$ and for which $\text{diam} f^{-k'}_*(U_i) < \vp, 
0\le k' \le n'$, where $\hat z' = \hat z'(\hat y)$ for some $\hat y \in 
F_i \subset \hat\Lambda$ as above.
We do this for all the points of $U_i \cap J_m$ and denote by $n_i$ the 
largest integer $n_i(\hat z')$ for all $z' \in U_i\cap J_m$ and 
all prehistories $\hat z'$ from $F'_i$.
Obviously we cannot stretch the open set $U_i$ in backward time forever, 
while keeping the diameter of its inverse iterates smaller than $\vp$, hence 
$n_i$ is finite. Also, $n_i, n_i(\hat z')$ are multiples of $n$, 
so they can be written as $n_i = n m_i, n_i(\hat z') = n m_i(\hat z')$.
In addition, for a point $z'\in U_i \cap J_m$ and a prehistory 
$\hat z' \in F'_i$, 
we will define also the integer $\bar n_i(\hat z')$ as the smallest integer
(not necessarily a multiple of $n$) such that 
$\text{diam}f^{-\bar n_i(\hat z')}_* U_i > \vp$. 
We remark that the definitions imply the inequalities 
$$
n_i(\hat z') \le \bar n_i(\hat z') \le
n_i(\hat z') + n,
$$ 
for any point 
$z' \in J_m \cap U_i$ and any prehistory $\hat z' \in F'_i$.

We shall cover now the set $Y_i$ with sets of type 
$\Lambda(C', \vp)$, where $C' \in \mathcal{C}^n_*$ (i.e $C'$ are prehistories with respect to $f^n$).
In order to do this, take an arbitrary $z' \in \frac 12 U_i \cap J_m$ and a prehistory 
$\hat{z'}= \hat z'(\hat y) \in F'_i$, which corresponds to 
some complete (infinite) prehistory $C = \hat y \in F_i$. By $\frac 12 U_i$ we understand the ball with the same center as $U_i$ and with half its radius. 
Then consider the 
$m_i(\hat z')$-
prehistory $C'$ of $y$ (prehistory with respect to $f^n$), coming from the 
prehistory $C$, i.e we have $C' = (y, y_{-n},..., 
y_{-nm_i(\hat z')})$. Recall that $z' \in W^u_{\vp/2}(\hat y)$.
From the definition of $n_i(\hat{z'})$ we see immediately that 
$U_i \subset \bb P^2(C', \vp)$, and also $y \in \Lambda(C', \vp)$.
Recall that $C'$ is an $m_i(\hat z')$- prehistory with respect to $f^n$.
Hence, since $N_0(\vp)$ is the smallest cardinality of a cover
of $\Lambda$ with balls of radius $\vp$, and since $n_i= n m_i$ is the largest 
integer of the form $n_i(\hat z')$, we can cover the set $Y_i$ with at most
$N_0(\vp)^{m_i}$ sets of the form $\Lambda(C', \vp)$, where $C'$ are 
prehistories for $f^n$ of length $n(C')$, with $n(C') \le m_i$.
We will denote by $\Gamma_i$ the set of prehistories $C'$ used for
the last covering. So we have $Y_i \subset \mathop{\cup}
\limits_{C' \in \Gamma_i}\Lambda(C', \vp)$, and $\Gamma_i \subset 
\mathcal{C}^n_*, n(C') \le m_i, \forall C' \in \Gamma_i$. 
This construction can be done
for every $i \in I$ and, for each such $i$, we have 
Card$\Gamma_i \le N_0(\vp)^{m_i}$.

But we proved that, for all $\hat y \in \hat \Lambda$, the local unstable 
manifold $W^u_\vp(\hat y)$ intersects
$J_m$; on the other hand $J_m \subset 
\mathop{\cup}\limits_{i \in I}U_i$.
In conclusion, $Y \subset \mathop{\cup}\limits_{i \in I}Y_i$, hence 
$Y \subset \mathop{\cup}\limits_{i\in I}\mathop{\cup}
\limits_{C' \in \Gamma_i} \Lambda(C', \vp)$. 
Using this cover of $Y$ with sets $\Lambda(C', \vp), C' \in \mathcal{C}^n_*$,
we will estimate $M^-_{f^n}(0, (t+\eta)\phi^s_n, Y, N,\vp)$ for some
large integer $N$ chosen so that $n(C') \ge N, \forall C' \in \mathop{\cup}
\limits_{i \in I}\Gamma_i$:
$$
M^-_{f^n}(0, (t+\eta)\phi^s_n, Y, N, \vp)  \le \sum\limits_{i \in I}
\sum\limits_{C' \in 
\Gamma_i} \exp(S^-_{n(C')}(t+\eta)\phi^s_n(C'))
$$
Let us investigate now what is the relation between $\text{diam}U_i$
and $\exp(S^-_{n(C')}(t+\eta)\phi^s_n(C')), C' \in \Gamma_i$.
From the definition of $n_i(\hat z')$ we know that it represents the largest
integer $n'$, multiple of $n$, such that 
$\text{diam} f^{-k'}_*(U_i) < \vp, 0 \le k' \le n'$. 
Also, $\bar n_i(\hat z')$ represents
the smallest integer (not necessarily multiple of $n$) such that 
$\text{diam}f^{-\bar n_i(\hat z')}_* U_i > \vp$, where the inverse branches
$f^{-k}_*$ were defined along the prehistory $\hat z'= \hat z'(C)$. 

We consider now what happens to $U_i$ when taking inverse iterates. Let $z"$ be another point in $\frac 12 U_i \cap \Lambda$, and $\zeta "$ the intersection between $W^s_r(z")$ and the unstable manifold $W^u_r(\hat z')$; from the local product structure $\zeta" \in \Lambda$. Then, since $U_i$ is a ball, we get $\text{diam} f^{-\bar n_i(\hat z')} (W^s_r(z') \cap U_i) = constant \cdot |Df_s^{\bar n_i(\hat z')}(z'_{-\bar n_i(\hat z')}|^{-1}$, and $\text{diam} f^{-\bar n_i(\hat z')} (W^s_r(z") \cap U_i) = constant \cdot |Df_s^{\bar n_i (\hat z')}(\zeta"_{-\bar n_i(\hat z')})|$, due to the bounded distortion property from Proposition \ref{proC}.
But since $\zeta" \in W^u_r(\hat z')$ and $\hat \zeta"$ is the prehistory of $\zeta"$ following $\hat z'$, we see that the distance 
$d(z '_{-j}, \zeta "_{-j})$ decreases exponentially when j increases; thus due to the fact that $|Df_s|(z)$ depends Lipschitz continuously on $z$ (Theorem \ref{thm0}), we get that  $|Df_s^{\bar n_i (\hat z')}(\zeta"_{-\bar n_i(\hat z')})|$ and $|Df_s^{\bar n_i(\hat z')}(z'_{-\bar n_i(\hat z')}|$ are the same up to a constant independent of $z'$.

Therefore we will obtain, for every $i \in I$ that:
\begin{equation}\label{estim-diam}
\text{diam}U_i > \vp \exp(S^-_{\bar n_i(\hat z')} \phi^s(C'')) 
 \ge \vp \exp(S^-_{m_i(\hat z')} \phi^s_n(C'))
\lambda_s^n,
\end{equation}
where we considered first the $\bar n_i(\hat
z')$-prehistory $C'':= (y, y_{-1},..., y_{-\bar n_i(\hat z')})$,
(prehistory with respect to $f$, induced by the full prehistory $C:= \hat y$),
 and then
the $m_i(\hat z')$-prehistory $C':= (y, y_{-n},..., y_{-nm_i(\hat z')})$, 
(prehistory with respect to $f^n$, induced by the same complete prehistory 
$C$). We used also in (~\ref{estim-diam}) the fact that $\bar n_i(\hat z') \le n_i(\hat z') + n$.

Therefore by using (~\ref{estim-diam}) and the fact that
$\text{Card} \Gamma_i \le N_0(\vp)^{m_i}$, we can 
continue now with the estimate for $M^-_{f^n}(0, (t+\eta)\phi^s_n, Y, N, \vp)$
as follows:
\begin{equation}\label{M}
\aligned
M^-_{f^n}(0, (t+\eta)\phi^s_n, Y, N, \vp)& \le \sum\limits_{i\in I}
\sum\limits_{C' \in \Gamma_i} \vp^{-t-\eta} (\text{diam}U_i)^{t} \cdot exp(S^-_{m_i(\hat z')} \phi^s_n(C'))^\eta \lambda_s^{-n(t+\eta)} \\
& \le \sum \limits_{i\in I} [N_0(\vp)^{m_i} \cdot exp(S^-_{m_i(\hat z')} \phi^s_n(C'))^\eta] \vp^{-t-\eta} (\text{diam}U_i)^t  \lambda_s^{-n(t+\eta)}  \\
& \le \sum\limits_{i\in I} [N_0(\vp)\cdot \chi_s^{n\eta}]^{m_i} \chi_s^{-n\eta} \vp^{-t-\eta} (\text{diam}U_i)^t
 \lambda_s^{-n(t+\eta)} \\
\endaligned
\end{equation}
where we used in the last inequality the definition of $n_i(\hat z')$ and that $|Df_s^{n_i(\hat z')}(z'_{-n_i(\hat z')})|$ is the same as $|Df_s^{n_i(\hat z')}(z'_{-n_i(\hat z')})|$ up to a factor less than $\chi_s^n$ for any $z', z' \in U_i \cap J_m$. Thus we may as well use for $\hat z'$ the prehistory with the maximum $n_i(\hat z')$, hence with $n_i(\hat z') = n_i = n m_i$.

In the above sequence of inequalities, we used also that $0< \eta < 1$, $0 < t < 3$.
But $n_i = n m_i$, 
so (~\ref{M}) implies that 
\begin{equation}\label{M2}
\aligned
M^-_{f^n}(0, (t+\eta)\phi^s_n, Y, N, \vp)& \le \vp^{-t-1}\sum\limits_{i \in I}
 (\text{diam}U_i)^t  [N_0(\vp) \chi_s^{\eta n}]^{m_i} \lambda_s^{-4n}\chi_s^{-n}\\
&\le \vp^{-t-1}  \lambda_s^{-4n} \chi_s^{-n} \sum\limits_{i \in I}
 (\text{diam}U_i)^t  [N_0(\vp) \chi_s^{\eta n}]^{m_i}
\endaligned
\end{equation}

But from \ref{eta} and since $n \ge n(\vp, \eta)$, we see that
$N_0(\vp) \chi_s^{\eta n} < 1$.
From the way of
choosing the cover $\mathcal{U}$ in (~\ref{Ui}), 
we have also $\sum\limits_{i \in I} (\text{diam}U_i)^t < \vp^{t+1} \cdot \lambda_s^{4n}\chi_s^{n}$. In conclusion the inequality (~\ref{M2}) becomes

\begin{equation}\label{M3}
 M^-_{f^n}(0, (t+\eta)\phi^s_n, Y, N, \vp) < 1
\end{equation}

Since $\gamma$ and consequently $\text{diam}U_i, i \in I$ can be taken as small
as we wish, we see that $n(C')$ can also be made arbitrarily large, for 
$C' \in \mathop{\cup}\limits_{i \in I}\Gamma_i$. Therefore if $\gamma \to 0$, $N$ can be 
taken arbitrarily large, and (~\ref{M3}) implies that $M^-_{f^n}(0, (t
+\eta)\phi^s_n, Y, \vp) = 0$.
Thus one can conclude that $P^-_{f^n}((t+\eta)\phi^s_n, Y, \vp) \le 0$, for
$0< \eta <1$ and $n \ge n(\vp, \eta)$. But let us also remember that $Y$ was 
just the intersection between $\Lambda$ and one of the balls 
$B(y_1, \tilde\alpha\vp/4), ..., B(y_s, \tilde\alpha\vp/4)$ which cover
$\Lambda$.
Therefore by Proposition ~\ref{pro1} ii), it follows that 
$$
P^-_{f^n}((t+\eta)\phi^s_n, \Lambda, 
\vp) \le 0, \ \text{for} \ n \ge n(\vp, \eta).
$$
This implies that $t+\eta \ge t^s_{n}(\vp)$, for $n \ge n(\vp, \eta)$.
Since $t$ was chosen arbitrarily larger than $\delta^s(x)$, we obtain 
$\delta^s(x) + \eta \ge  t^s_{n}(\vp)$, for $n \ge n(\vp, \eta)$. 

\end{proof}

\begin{cor}\label{aplicatie}
In the same setting as in the previous Theorem, if $x, y$ are arbitrary
points from $\Lambda$, then $|\delta^s(x) - \delta^s(y)| \le 
\frac{(\ovl{\text{dim}}_B\Lambda) \cdot \log \chi_u}{\log \chi_s^{-1}}$, 
where $\chi_u := \sup\limits_{z \in \Lambda} |Df_u(z)|$.
\end{cor}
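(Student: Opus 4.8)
The plan is to obtain the Corollary by confronting the uniform upper bound $\delta^s(z) \le t^s$, valid for \emph{every} $z \in \Lambda$ (Theorem \ref{thm1}(a)), with the lower bounds of Theorem \ref{thm1}(b), but applied at the shrinking scales $\rho_n = \vp\rho^n$ rather than at one fixed scale --- for at those scales Proposition \ref{pro4}(b) tells us that the zero $t^s_n(\rho_n)$ of $t \mapsto P^-_{f^n}(t\phi^s_n,\rho_n)$ equals exactly $t^s$. Concretely, I would first fix an arbitrary number $\eta > \eta_0 := \frac{(\ovl{\text{dim}}_B\Lambda)\,\log\chi_u}{\log\chi_s^{-1}}$, and then choose $\beta_1 > \ovl{\text{dim}}_B\Lambda$ and $\rho \in (0,\chi_u^{-1})$, close enough to $\ovl{\text{dim}}_B\Lambda$ and to $\chi_u^{-1}$ respectively, that one still has
\begin{equation*}
\beta_1 \log(1/\rho) \ < \ \eta\,\log\chi_s^{-1}.
\end{equation*}
This is possible since $\beta_1\log(1/\rho) \to (\ovl{\text{dim}}_B\Lambda)\log\chi_u = \eta_0\log\chi_s^{-1} < \eta\log\chi_s^{-1}$ as $\beta_1 \downarrow \ovl{\text{dim}}_B\Lambda$ and $\rho \uparrow \chi_u^{-1}$. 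Fix also a small $\vp < \min\{\vp_0,r_0\}$; note $\rho < 1$, since $\chi_u > 1$.

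The second step is to observe that the proof of Theorem \ref{thm1}(b) uses the hypothesis $n \ge n(\vp,\eta)$ \emph{only} through inequality (\ref{eta}), i.e. through the requirement $N_0(\vp)\,\chi_s^{n\eta} < 1$ at the shadowing scale in play; every other ingredient of that proof holds for all $n$ (and all sufficiently small shadowing scales). Re-running that argument verbatim with the scale $\rho_n$ in place of $\vp$ therefore yields
\begin{equation*}
\delta^s(y) + \eta \ \ge\ t^s_n(\rho_n) \qquad \text{for every } y\in\Lambda,
\end{equation*}
provided only that $\rho_n$ meets the smallness conditions on the shadowing scale appearing in that proof (true for all large $n$, since $\rho_n \to 0$) and that $N_0(\rho_n)\,\chi_s^{n\eta} < 1$. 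For the latter I would invoke the box-dimension estimate $N_0(\rho_n) < (1/\rho_n)^{\beta_1} = (1/\vp)^{\beta_1}\rho^{-n\beta_1}$, valid once $\rho_n$ is small, so that
\begin{equation*}
N_0(\rho_n)\,\chi_s^{n\eta} \ < \ (1/\vp)^{\beta_1}\,\exp\!\big(\,n\,[\,\beta_1\log(1/\rho) - \eta\log\chi_s^{-1}\,]\,\big),
\end{equation*}
and the bracket is negative by the choice of $\beta_1,\rho$, so the right-hand side is $<1$ for all $n$ large. Fixing one such integer $n > 1$, Proposition \ref{pro4}(b) gives $t^s_n(\rho_n) = t^s$ and Theorem \ref{thm1}(a) gives $\delta^s(x) \le t^s$, whence
\begin{equation*}
\delta^s(x) \ \le\ t^s \ =\ t^s_n(\rho_n) \ \le\ \delta^s(y) + \eta .
\end{equation*}

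Since $\eta$ was an arbitrary number larger than $\eta_0$, this forces $\delta^s(x) - \delta^s(y) \le \eta_0$, and interchanging $x$ and $y$ gives $|\delta^s(x) - \delta^s(y)| \le \eta_0 = \frac{(\ovl{\text{dim}}_B\Lambda)\,\log\chi_u}{\log\chi_s^{-1}}$, which is the assertion. The one genuinely delicate point is the coupling of the three limits $\beta_1 \downarrow \ovl{\text{dim}}_B\Lambda$, $\rho \uparrow \chi_u^{-1}$ and $n \to \infty$: because the scale $\rho_n$ itself depends on $n$, condition (\ref{eta}) becomes an inequality with $n$ on both sides, and it is solvable for large $n$ precisely when the ``expansion exponent'' $\beta_1\log(1/\rho) \approx (\ovl{\text{dim}}_B\Lambda)\log\chi_u$ stays strictly below $\eta\log\chi_s^{-1}$ --- which is exactly the inequality defining $\eta_0$, and is why $\chi_u$ (rather than any finer quantity) enters the bound. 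No ingredient beyond those already invoked in Theorem \ref{thm1} and Propositions \ref{proC}--\ref{pro4} is needed.
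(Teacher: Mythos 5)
Your proof is correct and follows essentially the same route as the paper's: confront the uniform upper bound $\delta^s(\cdot)\le t^s$ of Theorem~\ref{thm1}(a) with the scale-$\rho_n$ version of the lower-bound argument of Theorem~\ref{thm1}(b), use Proposition~\ref{pro4}(b) to identify $t^s_n(\rho_n)=t^s$, and arrange $\beta_1>\ovl{\text{dim}}_B\Lambda$ and $\rho<\chi_u^{-1}$ so that the analogue of~(\ref{eta}) at the shrinking scale $\rho_n$ holds for all large $n$. The paper's own proof runs the lower-bound step at $x$ and Theorem~\ref{thm1}(a) at $y$, whereas you do the reverse, but this is the same argument up to interchanging $x$ and $y$.
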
 

\begin{proof}
First, let us notice that $\ovl{\text{dim}}_B \Lambda \le 4$ since 
$\Lambda \subset \mathbb P^2$, so even if $\ovl{\text{dim}}_B \Lambda$ 
cannot be 
calculated explicitly, the statement of the corollary still gives a good estimate of the maximum possible variation of $\delta^s(\cdot)$ on $\Lambda$.

Let us take an arbitrary $\eta$ with $\eta > \frac{(\ovl{\text{dim}}_B \Lambda) \log \chi_u}{\log \chi_s^{-1}}$ and an arbitrary $t$ with $t > \delta^s(x)$.
Then there exists $\beta_1 > \ovl{\text{dim}}_B \Lambda$ such that
$\eta > \frac{\beta_1 \cdot \log \chi_u}{\log \chi_s^{-1}}$.
Now, if $\beta_1 > \ovl{\text{dim}}_B \Lambda$, then there will exist a large integer $n_1= n_1(\beta_1)$ depending on $\beta_1$ such that for any $n \ge n_1$, $\rho_n$ is small enough so that $N_0(\rho_n) \le (\frac{1}{\rho_n})^{\beta_1}$, where $N_0(\cdot)$ and $\rho_n$ were introduced in the proof of Theorem \ref{thm1}.
Hence $N_0(\rho_n)\cdot \chi_s^{n\eta} \le 
(\vp\rho^n)^{-\beta_1}\chi_s^{n\eta}$.  
But we assumed $\eta > \frac{\beta_1 \log\chi_u}{\log\chi_s^{-1}}$, so there exists 
$n_1$ large enough and $\rho \in (0, \chi_u^{-1})$ close to $\chi_u^{-1}$, such that 
$(\vp\rho^n)^{-\beta_1}\chi_s^{n\eta} < 1$ for $n > n_1$.
This implies then: 
\begin{equation}\label{28}
N_0(\rho_n)\cdot \chi_s^{n\eta} < 1
\end{equation}
Now we can use inequality (\ref{28}) and (\ref{M2}) to prove that 
$M^-_{f^n}(0, (t+\eta)\phi^s_n, Y, \rho_n) < 1$; this implies then that
$$
P^-_{f^n}((t+\eta)\phi^s_n, \rho_n) \le 0, \text{for}\ n > n_1
$$
Thus we conclude from above that $t+ \eta \ge t^s_n(\rho_n)$.
But from Proposition \ref{pro4}, $t^s_n(\rho_n) = t^s, n \ge 1$. So $t+\eta \ge t^s$. Since $t$ is arbitrarily larger than $\delta^s(x)$ and $\eta$ is arbitrarily larger than $\frac{(\ovl{\text{dim}}_B \Lambda) \log \chi_u}{\log \chi_s^{-1}}$, it follows that $\delta^s(x)+\frac{(\ovl{\text{dim}}_B \Lambda) \log \chi_u}{\log \chi_s^{-1}} \ge t^s \ge \delta^s(y), y \in \Lambda$, \ where the inequality $t^s \ge \delta^s(y)$ follows from Theorem \ref{thm1}.
Therefore, $|\delta^s(x) - \delta^s(y)| \le 
\frac{(\ovl{\text{dim}}_B \Lambda)\cdot \log \chi_u}{\log \chi_s^{-1}}, 
\forall x, y \in \Lambda$.
\end{proof} 

\section{Independence of $\delta^s(x)$ when the map $f$ is open on $\Lambda$}

In this section we show that, for an Axiom A holomorphic map $f$ on $\mathbb P^2$ which, in addition, is also open on the basic set $\Lambda$, the stable dimension $\delta^s(x)$ becomes independent of $x \in \Lambda$.

It is easy to prove that for $\Lambda$ connected, the condition $f|_\Lambda: \Lambda \to \Lambda$ open, is equivalent to saying that the cardinality of the set $f^{-1}(x) \cap \Lambda$ is constant when $x$ ranges in $\Lambda$.

Fornaess and Sibony have introduced and studied in \cite{FS} a type of holomorphic maps $g$ on $\mathbb P^2$ which are Axiom A and such that the saddle part $S_1$ of the non-wandering set has a neighbourhood $U$ with the property that $g^{-1}(S_1) \cap U = S_1$ (among other properties). Such maps were called \textit{s-hyperbolic}. Notice that any s-hyperbolic map is in particular open on any 
basic set $\Lambda$ of saddle type. Examples of s-hyperbolic maps were given
in \cite{FS}.

In the sequel we will prove that the openness of $f$ on $\Lambda$ is a sufficient condition in order to guarantee that $\delta^s(x)$ does not depend on $x \in \Lambda$. The proof will use ideas and notations related to the concept of inverse pressure (the sets $\Lambda(C, \vp)$, and their concatenations, for example).

\begin{thm}\label{thm2}
Consider a holomorphic Axiom A map $f:\mathbb P^2 \to \mathbb P^2$ and a basic set of saddle type $\Lambda$ which does not intersect the critical set $\mathcal{C}_f$. Moreover assume that $f|_\Lambda : \Lambda \to \Lambda$ is open, in particular any point $x\in \Lambda$ has the same number of preimages in $\Lambda$ (this number being denoted by $d'$). 
Then for any $x \in \Lambda$, $\delta^s(x) = t^s_0$, where $t^s_0$ is the unique zero of the pressure function $t \to P(t\phi^s - \log d')$.
\end{thm}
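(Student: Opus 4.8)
The inequality $\delta^s(x)\le t^s_0$ for every $x\in\Lambda$ is not new: it is exactly the Theorem of \cite{MU1} recalled in the Introduction, applied with the present $d'$, since here every point of $\Lambda$ has \emph{at least} $d'$ preimages in $\Lambda$. Moreover $t^s_0$ is defined only through the potential $\phi^s$, the map $f|_\Lambda$ and the integer $d'$, none of which depends on a base point. Hence the entire content of the statement is the reverse inequality $\delta^s(x)\ge t^s_0$; once this is established, the equality $\delta^s(\cdot)\equiv t^s_0$ on $\Lambda$ — and in particular the independence of $\delta^s(x)$ of $x$ — follows at once.

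My plan for the lower bound is a mass--distribution argument carried out along the stable leaf. I would construct, for a fixed $x\in\Lambda$, a Borel probability measure $\nu_x$ supported on $W^s_r(x)\cap\Lambda$, together with constants $C\ge 1$ and $\rho_*>0$, such that $\nu_x\big(B(y,\rho)\big)\le C\rho^{t^s_0}$ for all $y\in W^s_r(x)\cap\Lambda$ and $0<\rho<\rho_*$; the mass distribution principle then yields $HD\big(W^s_r(x)\cap\Lambda\big)\ge t^s_0$. The measure $\nu_x$ is taken to be the ``stable conditional'' of the equilibrium state: let $\mu$ be the unique $f$--invariant equilibrium (hence Gibbs) state on $\Lambda$ for the potential $t^s_0\phi^s$, which is Lipschitz by Theorem \ref{thm0}; by definition of $t^s_0$ one has $P_f(t^s_0\phi^s-\log d')=0$, i.e. $P_f(t^s_0\phi^s)=\log d'$. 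Lifting $\mu$ to the natural extension $\hat\Lambda$, disintegrating it along a measurable partition subordinate to the local unstable sets $W^u_r(\hat z)\cap\Lambda$, and restricting the transverse quotient measure to the stable leaf through $x$ produces $\nu_x$. Equivalently, and closer to the inverse--pressure bookkeeping of the paper, one may build $\nu_x$ directly as a weak-$*$ limit of normalized weights over the tree of prehistories: to a length--$m$ prehistory $C$ of a point of $W^s_r(x)\cap\Lambda$ one attaches the weight $\asymp(d')^{-m}\exp\big(t^s_0\,S^-_m\phi^s(C)\big)$ and passes to the limit over concatenations of the tubes $\Lambda(C,\vp)$. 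Openness of $f$ on $\Lambda$ — equivalently $\#\big(f^{-1}(\cdot)\cap\Lambda\big)\equiv d'$ — makes this tree exactly $d'$--ary, and the identity $P_f(t^s_0\phi^s)=\log d'$ is precisely what makes the total weight at each level stay bounded away from $0$ and $\infty$, so that the limit exists and is a probability.

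The heart of the argument is to match this measure to scale. For a length--$m$ prehistory $C=(z,z_{-1},\dots,z_{-m})$ with $z\in W^s_r(x)\cap\Lambda$, consider the ``stable prehistory piece'' $\Delta(C):=\Lambda(C,\vp)\cap W^s_r(x)$, i.e. the points of $W^s_r(x)\cap\Lambda$ possessing a length--$m$ prehistory $\vp$--shadowed by $C$. Using the bounded--distortion estimate of Proposition \ref{proC} together with the Lipschitz continuity of $\phi^s$ along stable leaves from Theorem \ref{thm0}, $\Delta(C)$ has diameter comparable, \emph{with constants independent of $m$ and $C$}, to $\exp\big(S^-_m\phi^s(C)\big)$, while the construction of $\nu_x$ gives $\nu_x\big(\Delta(C)\big)\asymp(d')^{-m}\exp\big(t^s_0\,S^-_m\phi^s(C)\big)$. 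A ball $B(y,\rho)$ of the leaf is then covered by a controlled number of pieces $\Delta(C)$ of diameter comparable to $\rho$; summing $\nu_x$ over such a cover and invoking once more the equation $P_f(t^s_0\phi^s)=\log d'$ to absorb the combinatorial factor carried by the $d'$--ary tree yields the required bound $\nu_x\big(B(y,\rho)\big)\lesssim\rho^{t^s_0}$. It is exactly the openness of $f$ that reconciles the combinatorial growth $(d')^m$ of the number of prehistory branches with the thermodynamic weight $\exp\big(t^s_0\,S^-_m\phi^s\big)$; this is also why the zero of $t\mapsto P(t\phi^s-\log d')$, rather than of $t\mapsto P(t\phi^s)$, is the right quantity. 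Without openness only the weaker two--sided estimates of Theorem \ref{thm1} survive and the gap measured in Corollary \ref{aplicatie} need not close.

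The step I expect to be the main obstacle is this last geometric control: proving that the stable prehistory pieces $\Delta(C)$ have diameter comparable to $\exp\big(S^-_m\phi^s(C)\big)$ with constants \emph{uniform in the length $m$}, and that the conditional measures $\nu_x$ are well defined and depend boundedly on the leaf. As the paper stresses, in the non--invertible setting one cannot use a global inverse map $f^{-1}$, nor a differentiable structure on $\hat\Lambda$; everything must be done branch by branch on $\Lambda$, so the uniformity of the distortion bounds of Proposition \ref{proC} along arbitrarily long prehistories, combined with the uniform--along--leaves Lipschitz estimate for $\phi^s$ of Theorem \ref{thm0}, is precisely what prevents the argument from degenerating as $m\to\infty$; holomorphicity enters here through the analyticity of the stable disks that underlies that uniform estimate.
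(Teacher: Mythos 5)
Your outline is correct at the level of reduction (the upper bound $\delta^s(x)\le t^s_0$ is the cited theorem of \cite{MU1}; the content of Theorem~\ref{thm2} is the reverse inequality), but the route you propose for $\delta^s(x)\ge t^s_0$ is genuinely different from the paper's. You go via a Frostman/mass-distribution argument: build a measure $\nu_x$ on $W^s_r(x)\cap\Lambda$ (as a stable conditional of the equilibrium state for $t^s_0\phi^s$, or equivalently as a weak-$*$ limit of prehistory weights $(d')^{-m}\exp(t^s_0 S^-_m\phi^s)$), show $\nu_x(B(y,\rho))\lesssim\rho^{t^s_0}$, and invoke the mass distribution principle. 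The paper never constructs any measure. Instead it argues contrapositively at the level of pressure: fix $t>\delta^s(x)$, use the definition of Hausdorff dimension to get a cover $(U_i)$ of $J_m\subset f^{-m}(W^s_r(x)\cap\Lambda)$ with $\sum(\mathrm{diam}\,U_i)^t<\tfrac12$, turn prehistory tubes of these $U_i$ into concatenations $\bar C=C_1\cdots C_s$ whose endpoints form an $(n,\vp)$-spanning set $F_n$, and estimate $\sum_{z\in F_n}e^{S_n(t\phi^s)-n\log d'}$ directly. The use of openness appears there in the purely combinatorial lemma $\Sigma_i\le 1$ (that the normalized count of prehistories stopping at each $U_i$ is at most one, because each point of $\Lambda$ has exactly $d'$ preimages), which forces $P(t\phi^s-\log d')\le 0$ and hence $t\ge t^s_0$. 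Both arguments turn on the same thermodynamic balance — the $d'$-ary prehistory tree against the weight $\exp(t^s_0 S^-_m\phi^s)$ — but yours expresses it as a measure normalization while the paper's expresses it as a combinatorial count.

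Where I would caution you: the obstacle you flag at the end is real and is not merely technical bookkeeping. In the non-invertible setting, the ``stable prehistory pieces'' $\Delta(C)=\Lambda(C,\vp)\cap W^s_r(x)$ do refine as $C$ is extended, but they do \emph{not} sit inside a single stable leaf under pullback: the inverse branches send points of $W^s_r(x)\cap\Lambda$ into other stable leaves $W^s(x_{-1}^{(j)})$, and the $(d')^m$ prehistories of a \emph{single} point of the leaf produce $(d')^m$ different $\Delta(C)$'s all containing that point, so the family $\{\Delta(C):n(C)=m\}$ has high multiplicity and is not a net. Making the Frostman bound work therefore requires either (i) a careful disintegration of the lifted Gibbs state $\hat\mu$ on $\hat\Lambda$ against a measurable partition subordinate to local unstable sets, with a proof that the conditional measures have uniformly bounded Radon--Nikod\'ym derivatives across the leaf (this is where your ``depend boundedly on the leaf'' hides substantial work), or (ii) a replacement of the family $\{\Delta(C)\}$ by a bounded-multiplicity sub-family at each scale, essentially re-deriving the paper's concatenation bookkeeping. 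Without one of these, $\nu_x(B(y,\rho))\lesssim\rho^{t^s_0}$ does not follow from $\nu_x(\Delta(C))\lesssim(\mathrm{diam}\,\Delta(C))^{t^s_0}$ alone. The paper's spanning-set/pressure argument circumvents precisely this difficulty, which is presumably why the authors chose it; your route is viable in principle (it is the classical Bowen--Manning--McCluskey strategy and would additionally produce the exact-dimensional measure on the leaf), but as written it is a plan with the hardest step left open, not a proof.
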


\begin {proof}

In \cite{MU1}, we proved that $\delta^s(x) \le t^s_0$, so it remains to prove now only the opposite inequality. Denote $W:= W^s_r(x) \cap \Lambda$.
As in the second part of the proof of Theorem \ref{thm1}, we find an integer $m \ge 1$ and a set $J_m \subset f^{-m}W \cap \Lambda$ such that all local unstable manifolds of size $\vp/2$ intersect $J_m$ (for some small fixed $0<\vp<\vp_0$).
Take also $t > \delta^s(x)$ arbitrary. Then there exists a finite open cover $\mathcal{U}= (U_i)_{i \in I}$ of $J_m$ with balls of diameter less than $\gamma <<1$, and so that $\mathop{\sum}\limits_{i \in I}(\text{diam}U_i)^t < \frac 12$.
Recall from the proof of Theorem \ref{thm1} the definition of $F_i'$, the set of prehistories in $\Lambda$ of points from $U_i \cap J_m$. In the sequel, for 
the clarity of notation, we will denote the set $U_i\cap J_m$ by $U_i$ too.

Assume $\hat z$ is a prehistory in $\Lambda$ of a point $z \in U_i$; denote by $n(\hat z)$ the largest integer such 
that $\text{diam}f^{-k}_*U_i < \vp/2, 0 \le k \le n(\hat z)$, where $f^{-k}_*$ is the branch of $f^{-k}$ determined
by the prehistory $\hat z$. For the prehistory $\hat z$, denote by $C(\hat z)$ the $n(\hat z)$-prehistory 
$(z, z_{-1}, ..., z_{-n(\hat z)})$ which is obtained by truncating $\hat z$.

Now for each $i \in I$, let us fix a point $z_i \in \frac 12 U_i \cap \Lambda$ and then consider the set $\tilde F_i$ of all finite prehistories $C(\hat z_i)$ obtained as above, for all prehistories in $\Lambda$ of $z_i$. Notice that we consider in this case all $d’$ $f$-preimages in $\Lambda$ of a given point $z_i \in U_i$.

Denote also by $U_i^* := \mathop{\cup}\limits_{C \in \tilde F_i} \Lambda(C, \vp)$; then $\Lambda = \mathop{\cup}\limits_{i \in I} U_i^*$.
For later reference, it is useful to note that for any prehistory $\hat y \in \hat \Lambda$, there exists $j \in I$ such that $W^u_{\vp/2}(\hat y) \cap U_j \ne \emptyset$; but then there exists a certain prehistory $\hat z_j$ of $z_j$ such that $W^u_\vp (\hat y) \cap \Lambda \subset \Lambda(C(\hat z_j), \vp)$ (this follows from the definition of $C(\hat z_j)$ and the fact that $f|_\Lambda$ is open). Therefore, all unstable manifolds of prehistories in $\hat \Lambda$ (intersected with $\Lambda$ ) are contained in some $\Lambda(C, \vp), C \in \mathop{\cup}\limits_{i \in I} \tilde F_i$.

For $i \in I$, $C \in \tilde F_i$, write $C$ as $(z^C, ..., z^C_{-n(C)})$ (obviously notationally $z^C = z_i$). Denote also by 
$G_i:= \{n(C), C \in \tilde F_i\}$, (recall that $n(C)$ denotes the length of $C$), and write 
$G_i$ as $\{n_{i1}, ..., n_{iq_i}\}$, where $n_{i1} <..< n_{iq_i}$.
Now, let $N_{ij}$ be the number of prehistories $C \in \tilde F_i$ with $n(C) = n_{ij}, 1\le j \le q_i, i \in I$.

We will make the connection between the sets $\Lambda(C, \vp)$ (obtained as above in the process of covering $\Lambda$, in the definition of inverse pressure $P^-$), and the Bowen balls needed in the definition of the (forward) pressure.
In general by a \textit{Bowen ball} $B_k(z, \vp), z \in \Lambda$, we mean the set 
$\{ y \in \Lambda, d(f^j y, f^j z) < \vp, 0 \le j \le k \}$.
Therefore, if $C \in \tilde F_i, i \in I$, we have $\Lambda(C, \vp) = f^{n(C)}(B_{n(C)}(z^C_{-n(C)}, \vp))$; 
for simplicity of notation, denote the Bowen ball 
$B_{n(C)}(z^C_{-n(C)}, \vp)$ by $B(C), C \in \tilde F_i, i \in I$.
From the above discussion, we know that $\Lambda = \mathop{\cup}\limits_{i \in I}U_i^* = \mathop{\cup}\limits_{i \in I}\mathop{\cup}\limits_{C \in \tilde F_i} f^{n(C)}(B(C))$.
However since the integers $n(C)$ are different among themselves, it does not follow directly that the Bowen balls $B(C)$ cover $\Lambda$. In order to get a covering of $\Lambda$ with Bowen balls, we will make a construction using 
concatenations of sets of type $\Lambda(C, \vp)$; it will be possible then to take the lengths of these 
concatenations arbitrarily large.  

Let in general $C$ and $C'$ be two prehistories of points in $\Lambda$, $C = (z, z_{-1}, ..., z_{-n(C)})$ and 
$C' = (w, w_{-1}, ..., w_{-n(C)})$. Assume also that there exists a point $ z' \in \Lambda(C, \vp)$, so that 
$z'_{-n(C)} \in \Lambda(C', \vp)$, where $z'_{-n(C)}$ represents the $n(C)$-preimage of $z'$ which is 
$\vp$-shadowed by $z_{-n(C)}$.
If $z'_{-n(C)} \in \Lambda(C', \vp)$, it follows that it has a prehistory $(z'_{-(n(C)+1)},..., z'_{-(n(C)+n(C'))})$
which is $\vp$-shadowed by $C'$.
So we can form the set $\Lambda(CC', \vp):= \{y \in \Lambda(C, \vp), y_{-n(C)} \in \Lambda(C', \vp)\}$, and from above, 
if this set is non-empty, then $\Lambda(CC', \vp) \subset \Lambda(C'', 2\vp)$, 
where $C''$ is an $(n(C)+n(C'))$-prehistory. This process will be called \textit{concatenation}.
 
We will use concatenation repeatedly in order to obtain a cover of $\Lambda$ with sets 
$\Lambda(C'', 2\vp)$ with $n(C'')$ arbitrarily large.
Define now the collection $\Gamma_n := \{\bar C = C_1...C_s, C_k \in \tilde F_{j_k}, j_k \in I, 1\le k \le s,
n \le n(C_1)+...+n(C_{j_s}) < n+N\}$, where here $N:= \mathop{\text{max}}\limits_{i \in I, C \in \tilde F_i} n(C)$.
Since $\Lambda = \mathop{\cup}\limits_{i \in I}\mathop{\cup}\limits_{C \in \tilde F_i} \Lambda(C, \vp)$, we see that also $$
\Lambda = \mathop{\cup}\limits_{\bar C \in \Gamma_n} \Lambda(\bar C, 2\vp)
$$
If $\bar C \in \Gamma_n$, and $\bar C = C_1...C_s$, denote by $n(\bar C) := n(C_1)+...+n(C_s)$.
But as noticed before, if $\bar C \in \Gamma_n$, there exist points $z^{\bar C}_{-n(\bar C)}$ such that
$\Lambda(\bar C, 2\vp) = f^{n(\bar C)} (B_{n(\bar C)}(z^{\bar C}_{-n(\bar C)}, 2\vp))$, 
and 
$n \le n(\bar C) < n+N$. Therefore $\Lambda = \mathop{\cup}\limits_{\bar C \in \Gamma_n}
f^n(f^{n(\bar C)-n} B_{n(\bar C)}(z^{\bar C}_{-n(\bar C)}, 2\vp))$.

Let us recall now the remark made earlier, after the definition of $U_i^*$.
Since any set  $W^u_{\vp/2}(\hat y)\cap \Lambda, \hat y \in \hat \Lambda$ is contained in $\Lambda(C, \vp)$ 
for some $C \in \mathop{\cup}\limits_{i \in I} \tilde F_i$ and since we collected the corresponding $C(\hat z_i)$ for all prehistories $\hat z_i \in \hat\Lambda$ and all $i \in I$, we obtain that any $f^n$-preimage in $\Lambda$ of a point from $\Lambda$ belongs to the union $\mathop{\cup}\limits_{\bar C \in \Gamma_n} f^{n(\bar C)-n} B_{n(\bar C)}(z^{\bar C}_{-n(\bar C)}, 2\vp))$. 
So we can conclude that $\Lambda = \mathop{\cup}\limits_{\bar C \in \Gamma_n}
f^{n(\bar C)-n} B_{n(\bar C)}(z^{\bar C}_{-n(\bar C)}, 2\vp))$.

On the other hand, notice that $f^{n(\bar C)-n} B_{n(\bar C)}(z^{\bar C}_{-n(\bar C)}, 2\vp) \subset B_n(z^{\bar C}_{-n}, 2\vp)$.

Denote then $F_n:= \{z^{\bar C}_{-n}, \bar C \in \Gamma_n\}$. From the previous considerations it follows that
$F_n$ is an $(n, \vp)$-spanning set for $\Lambda$, in the classical (forward) sense.
We will use this particular spanning set $F_n$ in order to estimate
$$
P_n(t\phi^s-\log d') := \inf\{\sum\limits_{z\in F} 
e^{S_n(t\phi^s)(z)- n \log d'}, \ F (n, \vp)-\text{spanning set for} \ \Lambda\}
$$
Let us remember the construction of the set $F_n$ and the points $z^{\bar C}_{-n(\bar C)}$.
If $\bar C = C_1...C_s, C_k \in \tilde F_{j_k}, 1 \le k \le s$, then from the proof of Proposition \ref{proC}, 
we have 
that there exists a positive constant $\sigma$ so that 
$|Df_s^{n(C_s)}(z^{\bar C}_{-n(\bar C)})| \le e^{\sigma\vp}\cdot \text{diam}U_{j_s}, 
..., |Df_s^{n(C_1)}(z^{\bar C}_{-n(C_1)})| \le e^{\sigma\vp}\cdot \text{diam}U_{j_1}$, \ (since $C_1\in 
\tilde F_{j_1}, ..., C_s \in \tilde F_{j_s}$).
Hence since $n \le n(\bar C) < n+N$, there will exist a positive constant $T_1$ independent of $n$ such that
$|Df_s^{n(\bar C)}(z^{\bar C}_{-n(\bar C)})| \le T_1 \cdot e^{n\sigma\vp} \cdot 
(\text{diam}U_{j_1})\cdot ...\cdot 
(\text{diam} U_{j_s})$. But recall that $|Df_s^{n(\bar C)}(z^{\bar C}_{-n(\bar C)})| = 
|Df_s^{n(\bar C)-n}(z^{\bar C}_{-n(\bar C)})|\cdot
|Df_s^n(z^{\bar C}_{-n})|$. Thus, for a positive constant $T_2$ we obtain the inequality:

\begin{equation}\label{diametru}
|Df_s^n (z^{\bar C}_{-n})| \le T_2 \cdot e^{n\sigma\vp}\cdot (\text{diam}U_{j_1})\cdot ... \cdot 
(\text{diam}U_{j_s}), 
\end{equation}
for all $\bar C \in \Gamma_n$ and all integers $n > 1$.

Now given $n$, and $j_1, ... j_s \in I$, we will estimate how many prehistories $\bar C = C_1...C_s$ there exist, 
with
 $C_k \in \tilde F_{j_k}, 1 \le k \le s$ and $\bar C \in \Gamma_n$.

For $i \in I$ and $1 \le j \le q_i$, we denoted by $N_{ij}$ the number of prehistories 
$C \in \tilde F_i$ with $n(C) = n_{ij}, \ n_{ij} \in G_i$.
Hence for each $s$, $j_1, ..., j_s \in I$, and integers $n_{j_k p_k} \in G_{j_k}, 1\le k \le s$, satisfying 
$n \le n_{j_1 p_1}+...+n_{j_s p_s}< n+N$, there exist at most $N_{j_1 p_1}\cdot ... \cdot N_{j_s p_s}$ prehistories
of type $\bar C = C_1...C_s$ in $\Gamma_n$ with $C_k \in \tilde F_{j_k}$ and 
$n(C_k) = n_{j_k p_k}, 1 \le k \le s$.
If $i \in I$, denote by 
$$
\Sigma_i:= \frac{N_{i1}}{(d')^{n_{i1}}} + ... + \frac{N_{iq_i}}{(d')^{n_{iq_i}}}
$$
To start with, let us compare $N_{i1}$ and $N_{i2}$. Since $n_{i1} < n_{i2}$, the prehistories stopping at 
$n_{i1}$ cannot be continued to $n_{i2}$-prehistories; hence using the fact that each point in 
$\Lambda$ has at most $d'$ preimages in $\Lambda$, 
it follows that $N_{i2} \le [(d')^{n_{i1}}-N_{i1}]\cdot (d')^{n_{i2}-n_{i1}}$. 
Similarly one can show that $N_{ij} \le (d')^{n_{ij}}-N_{i1} (d')^{n_{ij}-n_{i1}} - ...- 
N_{i(j-1)} (d')^{n_{ij}-n_{i(j-1)}}, 2\le j \le q_i$. 
This implies that, for each $i \in I$, we obtain:
\begin{equation}\label{sigmai}
\aligned
\Sigma_i \le & \frac{N_{i1}}{(d')^{n_{i1}}} + \frac{N_{i2}}{(d')^{n_{i2}}}+...\frac{N_{i(q_i-1)}}{(d')^{n_{i(q_i-1)}}} + 
 \frac{(d')^{n_{iq_i}}-N_{i1}(d')^{n_{iq_i}-n_{i1}}-...-N_{i(q_i-1)}(d')^{n_{iq_i}- n_{i(q_i-1)}}}{(d')^{n_{iq_i}}} \\
& \le (1-\frac{N_{i1}}{(d')^{n_{i1}}} -...- \frac{N_{i(q_i-1)}}{(d')^{n_{i(q_i-1)}}}) 
+ \frac{N_{i1}}{(d')^{n_{i1}}} +...+ \frac{N_{i(q_i-1)}}{(d')^{n_{i(q_i-1)}}} = 1
\endaligned
\end{equation}

Therefore from the last inequality it follows that $\Sigma_i \le 1, i \in I$ and hence
 $\Sigma_{j_1}\cdot ... \cdot \Sigma_{j_s} \le 1, j_1, ..., j_s \in I$. This implies then 
$\sum\limits_{1\le p_1\le q_{j_1}, ..., 1\le p_s \le q_{j_s}} \frac{N_{j_1p_1}...N_{j_s p_s}}{(d')^{n_{j_1 p_1}+...+n_{j_s p_s}}} \le 1$.
In particular,  if $j_1, ..., j_s \in I$, we get 
\begin{equation}\label{da}
\sum' \frac{N_{j_1 p_1}\cdot ... \cdot N_{j_s p_s}}{(d')^n} \le \Theta,
\end{equation} 
where $\Theta >0$ is a constant independent of $n, j_1, ..., j_s$ and
where the sum $\mathop{\sum}\limits^{'}$ is taken over all integers $n_{j_k p_k} \in G_{j_k}, 1\le k \le s$ satisfying 
$n \le n_{j_1 p_1}+...+n_{j_s p_s}< n+N$. 

We will use the above conclusions in order to estimate now $\sum\limits_{z\in F_n} 
e^{S_n(t\phi^s)(z)- n \log d'}$; first notice that for each $j_1, .., j_s \in I$, there exist 
at most  $\sum N_{j_1 p_1}\cdot ... \cdot N_{j_s p_s}$  prehistories 
$\bar C = C_1...C_s \in \Gamma_n$, with $C_k \in \tilde F_{j_k}, 1 \le k \le s$,
where the last sum is taken over all integers $n_{j_k p_k} \in G_{j_k}, 1\le k \le s$ 
satisfying  $n \le n_{j_1 p_1}+...+n_{j_s p_s}< n+N$. 
Then using (\ref{diametru}) and (\ref{da}), we will obtain: 
\begin{equation}\label{gata}
\aligned
P_n(t\phi^s-\log d') &\le \sum\limits_{z\in F_n} e^{S_n(t\phi^s)(z)-n \log d'}\\
& \le \sum'' (\sum’ N_{j_1 p_1}\cdot ... \cdot N_{j_s p_s}) \cdot (d')^{-n}
\cdot T_2 
e^{n\sigma\vp}\cdot (\text{diam}U_{j_1})^t\cdot ... \cdot (\text{diam}U_{j_s})^t \\
& \le \Theta T_2 \cdot e^{n\sigma\vp} \cdot \sum'' (\text{diam}U_{j_1})^t\cdot ... 
\cdot (\text{diam}U_{j_s})^t,
\endaligned
\end{equation}
where the sum $\mathop{\sum}\limits^{''}$ is taken over all integers $s >0$ and $s$-uples 
$j_1, ..., j_s \in I$ having some prehistories $C_1, ..., C_s$ in 
$\tilde F_{j_1}, ..., \tilde F_{j_s}$ respectively, which satisfy:  $C_1...C_s \in \Gamma_n$.
But the cover $(U_i)_{i \in I}$ has been taken such that 
$\sum\limits_{i \in I}(\text{diam}U_i)^t < \frac 12$, therefore 
$\sum\limits_{s>0}(\sum\limits_{i \in I}(\text{diam}U_i)^t)^s < 1$.
This implies that $\sum\limits_{s >0}\sum\limits_{j_1, ..., j_s \in I} (\text{diam}U_{j_1})^t\cdot...\cdot
 (\text{diam}U_{j_s})^t < 1$.
Therefore using (\ref{gata}) it follows that 
$$
P_n(t\phi^s - \log d') <  \Theta T_2 \cdot e^{n\sigma\vp} 
$$
The constants $\Theta, T_2, \sigma$ do not depend on $n, \vp$, if $\vp < \vp_1$ is small enough.
So we get $P(t\phi^s-\log d') = \mathop{\ovl{\lim}}\limits_n \frac{1}{n} \log P_n \le \sigma \vp$, and since 
$\vp>0$ is arbitrarily small, we get $P(t\phi^s-\log d') \le 0$. But this means that $t \ge t^s_0$, where $t^s_0$ 
denotes the unique zero of the function $t \to P(t\phi^s - \log d')$. Now recall that $t$ has been taken 
arbitrarily larger than $\delta^s(x)$, hence $\delta^s(x) \ge t^s_0$.
Recalling that the opposite inequality was proved in \cite{MU1}, 
we get finally that $\delta^s(x) = t^s_0, x \in \Lambda$. So, in case $f|_\Lambda$ is open, 
the stable dimension is independent of the point.
\end{proof}

In particular Theorem \ref{thm2} shows that in the case of 
s-hyperbolic maps studied in \cite{FS}, the stable dimension along basic sets of saddle type, is  
independent of the point .

Finally, notice that the proof of Theorem \ref{thm2} shows more 
generally that $\delta^s(x) \ge t^s_0$ if each point of $\Lambda$ has at 
most $d'$-preimages in $\Lambda$ (one may also denote $t^s_0$ by $t^s_0(d')$ when emphasizing its
 dependence on $d'$).
The number of preimages $d(x)$ that a point $x$ from $\Lambda$ has in $\Lambda$, is not necessarily constant. The above remark and Theorem 1.2 of \cite{MU1} prove the following:

\begin{cor}\label{ultimul}
In the setting of Theorem \ref{thm1}, if $d' \le  d(y) \le d'', y \in \Lambda$, then for each $x\in \Lambda$ it follows that $ t^s_0(d'') \le \delta^s(x) \le  t^s_0(d')$.
\end{cor}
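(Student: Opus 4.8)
The plan is to read off the two inequalities separately, each as an immediate consequence of a result already established, and then to record the monotonicity of $d' \mapsto t^s_0(d')$ that makes the displayed chain consistent. There is essentially no new work to do: the content is in recognizing that the corollary is the conjunction of an upper bound proved in \cite{MU1} and a lower bound that is, by the remark following Theorem \ref{thm2}, a one-sided byproduct of that theorem's proof.

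For the upper estimate I would argue as follows. By hypothesis every point $y\in\Lambda$ has at least $d'$ preimages in $\Lambda$, i.e. $d(y)\ge d'$. This is precisely the standing assumption of the Theorem of \cite{MU1} (Theorem 1.2 there) recalled in the Introduction, applied with the parameter equal to our $d'$. That theorem yields $\delta^s(x)\le t^s_0(d')$ for every $x\in\Lambda$, where $t^s_0(d')$ is the unique zero of $t\mapsto P(t\phi^s-\log d')=P(t\phi^s)-\log d'$.

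For the lower estimate I would invoke the remark made immediately after the proof of Theorem \ref{thm2}, namely that that proof in fact only uses the one-sided bound ``each point of $\Lambda$ has at most $d'$ preimages in $\Lambda$'' in order to conclude $\delta^s(x)\ge t^s_0(d')$. Concretely, the cardinality of $f^{-1}(y)\cap\Lambda$ enters the argument solely through the combinatorial estimate for $\Sigma_i$ in (\ref{sigmai}), where one bounds $N_{ij}\le (d')^{n_{ij}}-N_{i1}(d')^{n_{ij}-n_{i1}}-\cdots-N_{i(j-1)}(d')^{n_{ij}-n_{i(j-1)}}$ using that a point has at most $d'$ preimages; this inequality, hence $\Sigma_i\le 1$, hence (\ref{da}), hence the final bound $P(t\phi^s-\log d')\le 0$, all remain valid verbatim if ``$=d'$'' is weakened to ``$\le d'$''. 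Applying this with $d''$ in place of $d'$ — legitimate since by hypothesis $d(y)\le d''$ for all $y\in\Lambda$ — gives $\delta^s(x)\ge t^s_0(d'')$ for every $x\in\Lambda$.

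Combining the two displays yields $t^s_0(d'')\le\delta^s(x)\le t^s_0(d')$, which is the assertion; the chain is internally consistent because $k\mapsto t^s_0(k)$ is (weakly) decreasing: since $\phi^s<0$ the map $t\mapsto P(t\phi^s)$ is strictly decreasing, and $P(t\phi^s-\log k)=P(t\phi^s)-\log k$, so increasing $k$ lowers the graph and moves its unique zero to the left, whence $d'\le d''$ forces $t^s_0(d'')\le t^s_0(d')$. The only point requiring care in the final write-up is the claim that the proof of Theorem \ref{thm2} uses the preimage hypothesis purely in the one-sided form just indicated; for that reason I would, in the actual proof, point explicitly to the inequality for $N_{ij}$ in (\ref{sigmai}) as the sole place where the bound $d(y)\le d'$ (respectively $d(y)\le d''$) is invoked, everything else in that argument being independent of the exact preimage count.
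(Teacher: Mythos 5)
Your proposal is correct and matches the paper's own argument: the corollary is obtained by combining the upper bound from Theorem~1.2 of \cite{MU1} (using $d(y)\ge d'$) with the remark preceding the corollary asserting that the proof of Theorem~\ref{thm2} already yields $\delta^s(x)\ge t^s_0(d')$ under the one-sided hypothesis $d(y)\le d'$, applied with $d''$ in place of $d'$. One small caution on your elaboration: you assert that the preimage cardinality enters the argument of Theorem~\ref{thm2} \emph{solely} through the bound on $N_{ij}$ leading to $\Sigma_i\le 1$. That is where the cardinality bound $d(y)\le d'$ per se is used, but the proof of Theorem~\ref{thm2} also explicitly invokes the openness of $f|_\Lambda$ in establishing the covering $\Lambda=\bigcup_i U_i^*$ (the step ensuring that every unstable leaf is captured by some $\Lambda(C(\hat z_j),\vp)$); if one drops openness and keeps only the count bound, that covering step needs a separate justification, which the paper leaves implicit in its remark. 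This does not affect the correctness of your citation-based proof, since you are relying on the remark at the same level the paper does, but if you were to write out the elaboration in full you would want either to re-derive the covering without openness or to acknowledge that the remark is being taken as given.
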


It is important to remark that this Corollary does not require $f|_\Lambda$ to be open; it gives estimates of the stable dimension, for example in the 
case of quadratic maps from \cite{MU1}.

\section{Results in the real conformal case}

Most of the results of the previous sections work also in a more general setting, although for historical and example reasons we preffered to give them in the holomorphic case.

\begin{defn}
Let $M$ be a compact Riemannian manifold of real dimension 4, and 
$f:M \to M$ a $\mathcal{C}^r, r \ge 2$ map, possibly non-invertible. Let also $\Lambda$ a \textit{basic set of saddle type} for $f$, i.e there exists an open neighbourhood $V$ of $\Lambda$ in $M$, such that $\Lambda = \mathop{\cap}\limits_{n \in \mathbb Z} f^n(V)$, $f|_{\Lambda}:\Lambda \to \Lambda$ is transitive and $f$ is hyperbolic on $\hat \Lambda$ with both expanding and contracting directions.
Suppose also that $f$ is finite-to-one, the dimension of stable tangent spaces on $\Lambda$ is 2, and $f$ is conformal on its stable manifolds on $\Lambda$. 
We will say that such a map $f$ is \textbf{c-hyperbolic} on the basic set $\Lambda$. ("c" coming from "conformal").$\hfill\square$
\end{defn}

The notations for the stable dimension $\delta^s(x)$, the zero of the inverse pressure $t^s_n(\vp), t^s_n$, etc., remain the same.

The following theorems are proved in the same way as the previous corresponding theorems in the holomorphic case.

\begin{thm}
Consider $f:M \to M$ a c-hyperbolic map on the basic set $\Lambda$, such that $\mathcal{C}_f \cap \Lambda = \emptyset$. Then the map $x \to E^s_x$ is Lipschitz continuous and in particular, if $\phi^s(y):= \log |Df_s(y)|, y \in \Lambda$, then $\phi^s$ is Lipschitz continuous on $\Lambda$.
\end{thm}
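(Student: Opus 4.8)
The statement is the \emph{c}-hyperbolic counterpart of Theorem \ref{thm0}, and the plan is to rerun the proof of Theorem \ref{thm0}, replacing $\bb P^2$ by $M$ throughout and tracking the two places where the one-complex-dimensional structure was used (we stop short of the ``uniform along stable leaves'' statement, which needed holomorphicity). First I would fix $K>1$, equip $\hat\Lambda$ with the metric $d_K$, and — exactly as in Theorem \ref{thm0} — approximate the continuous hyperbolic splitting $E^s\oplus E^u$ over $\hat\Lambda$ by a splitting $F^s\oplus F^u(K)$ in which $F^s_x$ depends Lipschitz continuously on $x\in\Lambda$ (for the metric on $\Lambda$) and $F^u_{\hat x}(K)$ depends Lipschitz on $\hat x\in\hat\Lambda$ (for $d_K$), with $F^s_x$ being $\vp$-close to $E^s_x$ and $F^u_{\hat x}(K)$ $\vp$-close to $E^u_{\hat x}$. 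Then $E^s_x$ is the graph of an element of $\mathcal{L}_{\hat x}(K):=L(F^s_x,F^u_{\hat x}(K))$, the bundle map $\Psi$ induced by the graph transform of $Df^{-1}(\hat x)$ covers $h:=\hat f^{-1}$ on the bundle $\mathcal{L}(K)$, and one checks the three hypotheses of the Pointwise H\"older Section Theorem of \cite{W} with $\alpha=1$; the unique invariant section is $\hat x\mapsto E^s_x$, which is therefore Lipschitz with respect to $d_K$, i.e.\ the analogue of (\ref{stablelip}) holds.

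Two modifications relative to Theorem \ref{thm0} are needed. (i) Now $F^s_x$ and $F^u_{\hat x}(K)$ are two-real-dimensional, so after trivializing $\mathcal{L}(K)$ as in \cite{S} the fibres $\mathcal{L}_{\hat x}(K)$ are identified with the space of $2\times2$ real matrices rather than with $\bb C$, and the block entries $A_{\hat x},B_{\hat x}(K),C_{\hat x}(K),G_{\hat x}(K)$ of $Df^{-1}(\hat x)$ are genuine matrices. The verification that $\Psi_{\hat x}(g)=(C_{\hat x}(K)+G_{\hat x}(K)g)\circ(A_{\hat x}+B_{\hat x}(K)g)^{-1}$ depends Lipschitz continuously on $\hat x$ (condition 2), which in Theorem \ref{thm0} reduced to division of complex numbers, is replaced by the observation that matrix inversion $L\mapsto L^{-1}$ is locally Lipschitz on any set of invertible matrices with $|L^{-1}|$ uniformly bounded ($|L_1^{-1}-L_2^{-1}|=|L_1^{-1}(L_2-L_1)L_2^{-1}|$): since $A_{\hat x}$ approximates $Df_s^{-1}(\hat x)$, whose inverse is $Df_s(x_{-1})$ with $|Df_s(x_{-1})|\le\sup_\Lambda|Df_s|<1$ by conformality on stable manifolds, and since $|B_{\hat x}(K)|<a_1(\vp)\ll1$, the map $A_{\hat x}+B_{\hat x}(K)g$ is invertible with inverse of uniformly bounded norm for $|g|\le1$; combining this with the Lipschitz dependence of $A_{\hat x},B_{\hat x}(K),C_{\hat x}(K),G_{\hat x}(K)$ on $\hat x$ (from the smoothness of $f$ and the Lipschitz dependence of $F^s,F^u(K)$) gives the analogue of (\ref{psi}). (ii) For conditions 1) and 3), the fibre contraction rate is again $\lambda_{\hat x}\le|Df_u^{-1}(\hat x)|\cdot|Df_s(x_{-1})|+a_2(\vp)<1$ — here one invokes the hyperbolic expansion in the unstable directions together with $|Df_s|<1$, rather than the identity $|Df_u^{-1}|=|Df_u|^{-1}$ that held for free in the holomorphic case — while the local Lipschitz estimate for $h=\hat f^{-1}$ is unchanged, $\mu_{\hat x}(K)\ge(|Df_u(x_{-1})|+\tfrac1K+\omega(|Df_u|,\eta))^{-1}$ as in (\ref{mu}), so that $\rho(1,K)=\sup_{\hat x}\lambda_{\hat x}\mu_{\hat x}(K)^{-1}<1$ once $\vp,\eta$ are small and $K$ is large.

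Finally, the analogue of (\ref{stablelip}) is upgraded to Lipschitz continuity of $x\mapsto E^s_x$ for the metric on $\Lambda$ by the same two-case argument as in Theorem \ref{thm0}: with $\lambda_s:=\inf_\Lambda|Df_s|>0$ (finite and positive since $\mathcal{C}_f\cap\Lambda=\emptyset$) and $\tilde\vp_0:=\lambda_s\vp_0$, Case 1 handles $d(x,y)\ge\tilde\vp_0$ by bounding $d_K(\hat x,\hat y)\le C'd(x,y)$ as in (\ref{case1}), and Case 2 handles $d(x,y)<\tilde\vp_0$ by choosing the longest string of consecutive preimages $(x,x^*_{-1},\dots,x^*_{-n})$, $(y,y^*_{-1},\dots,y^*_{-n})$ staying $\vp_0$-close, using the stable contraction $d(x^*_{-i},y^*_{-i})\le\lambda_s^{-i}d(x,y)$ and maximality to get $K^{-(n+1)}<d(x,y)/\vp_0$, hence $d_K(\widehat{x^*},\widehat{y^*})\le C''d(x,y)$ as in (\ref{case2}); together with the analogue of (\ref{stablelip}) this yields $d(E^s_x,E^s_y)\le\Upsilon d(x,y)$. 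Since $f$ is $\mathcal{C}^r$, $r\ge2$, the field $Df_s$ is the restriction of the $\mathcal{C}^{r-1}$ field $Df$ to the now-Lipschitz subbundle $E^s$, hence $Df_s(\cdot)$ and its (strictly positive) conformal factor $|Df_s(\cdot)|$ are Lipschitz on $\Lambda$, so $\phi^s=\log|Df_s|$ is Lipschitz, completing the proof. I expect the main obstacle to be modification (i): replacing the arithmetic of $\bb C$ in the graph-transform estimates of Theorem \ref{thm0} by uniform $2\times2$-matrix estimates, so that $\Psi_{\hat x}$ is still a uniform fibre contraction and still Lipschitz in the base point; the quantitative check $\rho(1,K)<1$ in (ii) is the other point where care is needed about which operator norms appear.
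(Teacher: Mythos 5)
Your plan is exactly what the paper intends (the paper gives no separate proof; it simply states that the Section 4 theorems "are proved in the same way as the previous corresponding theorems in the holomorphic case"), and modification (i) — replacing the complex-division estimate for (\ref{psi}) by the resolvent identity $L_1^{-1}-L_2^{-1}=L_1^{-1}(L_2-L_1)L_2^{-1}$ plus the uniform bound on $|(A_{\hat x}+B_{\hat x}(K)g)^{-1}|$ coming from $|A_{\hat x}^{-1}|\approx|Df_s(x_{-1})|<1$ and $|B_{\hat x}(K)|\ll 1$ — is handled correctly; conformality of $Df_s$ is precisely what makes $A_{\hat x}^{-1}$ behave like a scalar of modulus $|Df_s(x_{-1})|$. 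But modification (ii), which you flag as needing "care about which operator norms appear," conceals a genuine gap. The cancellation that makes (\ref{rho}) work in Theorem \ref{thm0} is $\lambda_{\hat x}\cdot\mu_{\hat x}(K)^{-1}\approx\frac{|Df_s(x_{-1})|}{|Df_u(x_{-1})|}\cdot|Df_u(x_{-1})|=|Df_s(x_{-1})|<1$, and it hinges on the identity $|Df_u^{-1}(\hat x)|=|Df_u(x_{-1})|^{-1}$, i.e.\ on conformality of $Df$ along the \emph{unstable} direction. The c-hyperbolic definition only imposes conformality on stable manifolds; $E^u_{\hat x}$ is two-real-dimensional, so in general $|Df_u^{-1}(\hat x)|=m(Df_u(x_{-1}))^{-1}$ with $m$ the conorm (smallest singular value), and the product becomes $\lambda_{\hat x}\mu_{\hat x}(K)^{-1}\approx\kappa(Df_u(x_{-1}))\cdot|Df_s(x_{-1})|$, where $\kappa=|Df_u|/m(Df_u)\ge 1$ is the condition number of $Df_u$ on the unstable fibre.

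Knowing $\lambda_{\hat x}<1$ (hyperbolicity and $|Df_s|<1$) and $\mu_{\hat x}(K)^{-1}>1$ separately does not make their product less than one; you need a bunching hypothesis of the form $\sup_\Lambda\kappa(Df_u)\cdot|Df_s|<1$, which is automatic in Theorem \ref{thm0} (there $\kappa\equiv 1$ because $Df_u$ is complex-linear on a complex line) but is not part of the c-hyperbolic definition. Absent such a condition the Pointwise H\"older Section Theorem only delivers H\"older continuity of $x\mapsto E^s_x$ with some exponent $\alpha<1$ dictated by the spectral data, not Lipschitz. You are in good company — the paper itself leaves the adaptation to the reader and does not address this point — but a careful write-up should either impose the bunching hypothesis, verify it for the intended examples, or weaken the conclusion to H\"older continuity, which is actually all that is used downstream: in Proposition \ref{proC} the sums $\sum_j d(\cdot,\cdot)^\alpha$ still converge geometrically for any $\alpha>0$, so bounded distortion survives.
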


\begin{thm}\label{treal}

Let $f:M \to M$ be a c-hyperbolic map on a basic set $\Lambda$, with 
$\mathcal{C}_f \cap \Lambda = \emptyset$. Then:

(a) for every $x \in \Lambda$, we have $\delta^s(x) \le t^s_n(\rho_n) = t^s$, 
where $\rho_n$ are numbers of the form $\vp \rho^n, n \ge 1$, with $\chi_u:= \mathop{\sup}\limits_{\Lambda}||Df_u||$, and $\rho > 0$ is an arbitrary number smaller than $\chi_u^{-1}$, and $\vp < min\{\vp_0, r_0\}$.

(b) for all positive numbers $\vp < \vp_0$, and $\eta > 0$, we obtain $\delta^s(x) + \eta \ge t^s_n(\vp)$, where $n \ge n(\vp, \eta) > \frac{4 \log (\vp^{-1})}{\eta \log \chi_s^{-1}}$. In particular, if $\eta = \vp$ small enough, we get $\delta^s(x) + \vp \ge t^s_n(\vp)$, for $n > \frac{1}{\vp^{1.1}}$.
\end{thm}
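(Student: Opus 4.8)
The plan is to transcribe the proofs of Theorem~\ref{thm1} and of the preparatory Propositions~\ref{proC} and~\ref{pro4} into the c-hyperbolic setting, checking at each step that the only facts used are ones available here: a hyperbolic splitting $E^s\oplus E^u$ over the natural extension $\hat\Lambda$; conformality of $Df$ on the stable manifolds, which gives $\|Df^n_s(y)\|=\prod_{j=0}^{n-1}\|Df_s(f^jy)\|$ and hence $\phi^s_n=\sum_{j=0}^{n-1}\phi^s\circ f^j$; the local product structure and transitivity of $f|_\Lambda$; the disjointness $\mathcal C_f\cap\Lambda=\emptyset$, which keeps $\phi^s$ finite and permits local inverse branches of $f$ near $\Lambda$; and the Mean Value Inequality on small balls. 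The ambient manifold $M$ is $4$-real-dimensional, just as $\mathbb P^2\mathbb C$ is, so every covering and box-counting estimate in the holomorphic proof keeps its exponent $4$.

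First I would assemble the substitutes for the preliminary results. The Lipschitz continuity of $x\mapsto E^s_x$ and of $\phi^s$ on $\Lambda$ is the preceding c-hyperbolic Lipschitz theorem of this section; its proof is the Pointwise H\"older Section Theorem argument of Theorem~\ref{thm0}, which uses only that $f$ is $C^2$ and that $Df_s$ is conformal (this is where the exponent $\alpha=1$ enters), never analyticity, so it carries over verbatim. With it, the bounded-distortion estimate $C_1^{-1}\le |Df^m_s(y_{-m})|/|Df^m_s(x_{-m})|\le C_1$ together with $C_1(\vp)\le C_2\vp$ (Proposition~\ref{proC}) goes through unchanged, since its proof only combines the local product structure, the exponential contraction of stable and unstable coordinates along (\ref{laminated}), and Lipschitz continuity of $\phi^s$ on $\Lambda$. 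Consequently so does Proposition~\ref{pro4}: the relations $t^s_n(\vp)\ge t^s_{np}(\vp)$, $t^s_n=t^s$, and $P^-_{f^n}(t\phi^s_n,\rho_n)=P^-_{f^n}(t\phi^s_n)$ for $\rho_n=\vp\rho^n$ with $\rho<\chi_u^{-1}$, because those proofs only manipulate consecutive sums of $\phi^s_n$ and cover the $4$-dimensional sets $\Lambda(C,\rho_n)$ by at most $(\rho_nC_1(\vp)/\vp')^4$ sets $\Lambda(C',\vp')$.

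For part~(a) I would argue as in the proof of Theorem~\ref{thm1}(a): combine the c-hyperbolic version of the bound $\delta^s(x)\le t^s$ of \cite{MU2} with the identity $t^s_n(\rho_n)=t^s$ from the analogue of Proposition~\ref{pro4}. For part~(b) I would run the geometric construction of the proof of Theorem~\ref{thm1}(b) inside $M$: cover $\Lambda$ by balls $B(y_k,\tilde\alpha\vp/4)$ and fix $Y$ equal to the intersection of one of them with $\Lambda$; use transitivity of $f|_\Lambda$ to produce $m\ge 1$ and a set $J_m\subset f^{-m}W\cap\Lambda$ (with $W=W^s_r(x)\cap\Lambda$) meeting every local unstable manifold $W^u_\vp(\hat y)$ over prehistories $\hat y$ of points of $Y$; use that $f^m$ is bi-Lipschitz near $J_m$ to get $HD(J_m)\le HD(W)=\delta^s(x)<t$; cover $J_m$ by balls $U_i$ with $\sum_i(\text{diam}\,U_i)^t<\vp^{t+1}\lambda_s^{4n}\chi_s^n$; pull the $U_i$ back along prehistories, stopping at the largest multiple $n_i=nm_i$ of $n$ for which every iterate still has diameter $<\vp$; bound the number of the resulting prehistories in $\Gamma_i$ by $N_0(\vp)^{m_i}$ and $\text{diam}\,U_i$ from below by $\vp\exp(S^-_{m_i}\phi^s_n(C'))\lambda_s^n$; and assemble
\[
M^-_{f^n}\bigl(0,(t+\eta)\phi^s_n,Y,N,\vp\bigr)\ \le\ \vp^{-t-1}\lambda_s^{-4n}\chi_s^{-n}\sum_{i\in I}(\text{diam}\,U_i)^t\,[N_0(\vp)\,\chi_s^{\eta n}]^{m_i},
\]
whose right side is $<1$ once $n\ge n(\vp,\eta)$ forces $N_0(\vp)\chi_s^{\eta n}<1$, together with the displayed choice of the cover; this holds as soon as $n(\vp,\eta)>\frac{4\log(1/\vp)}{\eta\log(1/\chi_s)}$, using $\ovl{\text{dim}}_B\Lambda\le 4$. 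Shrinking the mesh of the cover makes all the lengths $n(C')$ arbitrarily large, so $M^-_{f^n}(0,(t+\eta)\phi^s_n,Y,\vp)=0$, hence $P^-_{f^n}((t+\eta)\phi^s_n,Y,\vp)\le 0$; Proposition~\ref{pro1}(ii) upgrades this from $Y$ to $\Lambda$, giving $t+\eta\ge t^s_n(\vp)$, and letting $t\downarrow\delta^s(x)$ yields $\delta^s(x)+\eta\ge t^s_n(\vp)$. Taking $\eta=\vp$ and noting that $\frac{4\log(1/\vp)}{\vp\log(1/\chi_s)}<\vp^{-1.1}$ for $\vp$ small gives the last assertion of~(b).

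The main obstacle --- in fact the only point that is not a transcription --- is the uniform boundedness of the geometry of the local stable manifolds $W^s_r(\cdot)$, needed so that $\text{diam}\,U_i$ and $\text{diam}(U_i\cap W^s_r(z'))$ stay comparable with constants independent of $i$, and so that the distortion estimates survive pulling $U_i$ back. In the holomorphic case this used that the stable leaves are analytic disks depending continuously, together with their second derivatives, on the base point --- the one place holomorphicity entered Theorem~\ref{thm0}. In the $C^r$, $r\ge 2$ case one instead uses that, $\Lambda$ being compact, the family $\{W^s_r(y)\}_{y\in\Lambda}$ can be chosen of a uniform size $r$ and depending uniformly $C^1$ (indeed $C^{r-1}$, since $\mathcal C_f\cap\Lambda=\emptyset$) on its base point, so the leaves have uniformly bounded curvature; this yields a uniform constant $\Xi$ with $|\phi^s(y)-\phi^s(z)|\le\Xi\,d(y,z)$ for $y,z\in W^s_r(x)$, $x\in\Lambda$, which is all the estimates above require.
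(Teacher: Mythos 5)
Your proposal is correct and follows essentially the same route as the paper, which simply states that Theorem~\ref{treal} is ``proved in the same way'' as Theorem~\ref{thm1}. Your one substantive addition — supplying a $C^2$-uniformity argument (bounded leaf curvature via compactness of $\Lambda$ and continuous $C^2$-dependence of $W^s_r(\cdot)$ on its base point) in place of the Cauchy-estimate argument for analytic disks used at the end of Theorem~\ref{thm0} — correctly identifies the single spot where holomorphicity entered and gives a valid substitute; the paper elides this point (its $c$-hyperbolic Lipschitz theorem in Section~4 states only the Lipschitz continuity of $\phi^s$ on $\Lambda$ and omits the ``Moreover'' clause of Theorem~\ref{thm0}), so your flagging it is a genuine improvement, though the phrase ``depending uniformly $C^1$ $\ldots$ so the leaves have uniformly bounded curvature'' should really invoke continuity of $x\mapsto W^s_r(x)$ in the $C^2$ (not merely $C^1$) topology to obtain the curvature bound.
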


Consequently we have the similar corollary:

\begin{cor}
In the same setting as in Theorem \ref{treal}, if $x, y \in \Lambda$, then $|\delta^s(x) - \delta^s(y)| \le \frac{(\ovl{dim}_B \Lambda)\cdot \log \chi_u}{\log \chi_s^{-1}}$.
\end{cor}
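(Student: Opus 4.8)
The plan is to repeat, essentially verbatim, the argument proving Corollary~\ref{aplicatie}, after checking that every ingredient it uses is available in the c-hyperbolic setting. Those ingredients are: the Lipschitz continuity of $x \mapsto E^s_x$, hence of $\phi^s$, on $\Lambda$ (the c-hyperbolic Lipschitz theorem stated above in this section); the bounded-distortion estimate of Proposition~\ref{proC}, together with the refinement $C_1(\vp) \le C_2 \vp$ established in its proof; the identity $t^s_n(\rho_n) = t^s_n = t^s$ of Proposition~\ref{pro4}; and the chain of covering estimates culminating in (\ref{M2}), combined with the upper bound $\delta^s(\cdot) \le t^s$ of Theorem~\ref{treal}(a). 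None of these uses the holomorphy of $f$ in an essential way: the proof of Theorem~\ref{thm0} invokes holomorphicity only for the uniform-along-leaves bound, which is not needed here, while Propositions~\ref{proC} and~\ref{pro4} rely solely on conformality of $f$ along stable manifolds, on $\mathcal{C}_f \cap \Lambda = \emptyset$, and on the Pointwise H\"older Section Theorem from \cite{W}, all of which remain valid for c-hyperbolic maps. These are precisely the features built into the definition of c-hyperbolicity.

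Concretely, I would first fix an arbitrary $\eta > \frac{(\ovl{\text{dim}}_B \Lambda)\cdot \log \chi_u}{\log \chi_s^{-1}}$ and an arbitrary $t > \delta^s(x)$, then choose $\beta_1 > \ovl{\text{dim}}_B \Lambda$ still satisfying $\eta > \frac{\beta_1 \log \chi_u}{\log \chi_s^{-1}}$. Picking $\rho \in (0, \chi_u^{-1})$ close enough to $\chi_u^{-1}$ and $n$ large enough, the numbers $\rho_n = \vp \rho^n$ are small enough that $N_0(\rho_n) \le \rho_n^{-\beta_1}$, hence $N_0(\rho_n) \cdot \chi_s^{n\eta} \le (\vp \rho^n)^{-\beta_1}\chi_s^{n\eta} < 1$, exactly as in (\ref{28}). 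Inserting this into the estimate (\ref{M2}) --- whose derivation transfers unchanged, since it uses only Proposition~\ref{proC} and elementary counts of covering sets --- yields $M^-_{f^n}\big(0, (t+\eta)\phi^s_n, Y, \rho_n\big) < 1$ for $n$ large, hence $P^-_{f^n}\big((t+\eta)\phi^s_n, \rho_n\big) \le 0$, and therefore $t + \eta \ge t^s_n(\rho_n)$. By the c-hyperbolic analogue of Proposition~\ref{pro4} one has $t^s_n(\rho_n) = t^s$, so $t + \eta \ge t^s$. Letting $t \downarrow \delta^s(x)$ and $\eta \downarrow \frac{(\ovl{\text{dim}}_B \Lambda)\log \chi_u}{\log \chi_s^{-1}}$ gives $\delta^s(x) + \frac{(\ovl{\text{dim}}_B \Lambda)\log \chi_u}{\log \chi_s^{-1}} \ge t^s$; combined with $t^s \ge \delta^s(y)$ from Theorem~\ref{treal}(a), this is the desired one-sided inequality, and interchanging the roles of $x$ and $y$ gives the absolute value.

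The only point requiring genuine care is the bookkeeping that keeps all constants uniform in the c-hyperbolic case --- chiefly confirming that conformality of $f$ on stable manifolds, rather than the full holomorphic structure of $\mathbb P^2$, is enough for the distortion bound $C_1(\vp) \le C_2 \vp$ underlying (\ref{M2}). Since the hypotheses of c-hyperbolicity were chosen precisely to preserve this, no new difficulty is expected.
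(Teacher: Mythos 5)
Your argument is correct and matches the paper's approach: Section 4 explicitly says the c-hyperbolic results are proved in the same way as their holomorphic counterparts, which is exactly the verbatim transfer of the Corollary~\ref{aplicatie} argument (with the check that each ingredient --- the Lipschitz continuity of $E^s$, the distortion bound of Proposition~\ref{proC}, the identity $t^s_n(\rho_n)=t^s$ from Proposition~\ref{pro4}, and the estimates culminating in (\ref{M2}) and Theorem~\ref{treal}(a)) survives in the c-hyperbolic setting). Your observation that holomorphicity enters Theorem~\ref{thm0} only for the uniform-along-leaves statement, which is not used in the chain producing (\ref{M2}), correctly identifies why the transfer goes through without new difficulty.
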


\begin{thm}\label{sf}
Consider a smooth map $f: M \to M$ which is c-hyperbolic on a connected 
basic set $\Lambda$ which does not intersect the critical set $\mathcal{C}_f$. Moreover assume that $f|_{\Lambda} : \Lambda \to \Lambda$ is open, in particular any point $x \in \Lambda$ has the same number of preimages in $\Lambda$ (denote this number by $d'$). Then for any $x \in \Lambda, \delta^s(x) = t^s_0$, where $t^s_0$ is the unique zero of the pressure function $t \to P(t\phi^s - \log d')$.
\end{thm}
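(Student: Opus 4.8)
The plan is to transfer the argument of Theorem \ref{thm2} essentially verbatim to the c-hyperbolic setting. The inequality $\delta^s(x)\le t^s_0$ is the c-hyperbolic counterpart of the estimate of \cite{MU1} (compare Corollary \ref{ultimul}); its proof uses no complex structure, so it carries over, and I concentrate on the reverse inequality $\delta^s(x)\ge t^s_0$. For this I would first isolate the ingredients that the proof of Theorem \ref{thm2} actually consumes: (i) the Lipschitz continuity of $\phi^s$ on $\Lambda$; (ii) the bounded distortion estimate for the iterates $Df^n_s$ along prehistories, i.e.\ the analogue of Proposition \ref{proC}; (iii) the local product structure of $\Lambda$ and the transitivity of $f|_\Lambda$; and (iv) the identity $\Lambda(C,\vp)=f^{n(C)}\big(B_{n(C)}(z^C_{-n(C)},\vp)\big)$ linking the inverse--pressure sets to forward Bowen balls. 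None of these needs $f$ to be holomorphic.

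Indeed, (i) is the c-hyperbolic analogue of Theorem \ref{thm0} (stated in this section as the Lipschitz continuity of $x\mapsto E^s_x$): its proof is that of Theorem \ref{thm0}, applying the Pointwise H\"older Section Theorem to the graph transform of $Df^{-1}$ over the homeomorphism $\hat f^{-1}$ of $\hat\Lambda$ with exponent $\alpha=1$, checking $\rho(1,K)<1$ from the estimates on $\lambda_{\hat x}(K)$ and $\mu_{\hat x}(K)$ exactly as there, and using neither a differentiable structure on $\hat\Lambda$ nor a complex structure on $M$ — only $\mathcal{C}^2$--smoothness and hyperbolicity on $\hat\Lambda$. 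The point worth stressing is that in Theorem \ref{thm0} holomorphicity enters only at the very end, to obtain the uniform Lipschitz bound of $y\mapsto E^s_y$ along the stable leaves, and that refinement is never invoked in the proof of Theorem \ref{thm2}; Lipschitz continuity of $\phi^s$ on $\Lambda$ alone is what is used. Ingredient (ii) follows as in Proposition \ref{proC}: the laminated product structure gives the exponential estimates $d(f^ix_{-m},f^iz)<\tilde c\,\gamma^{m-i}$ and $d(f^iy_{-m},f^iz)<\tilde c\,\gamma^{i}$, and combining these with the Lipschitz continuity of $\phi^s$ on $\Lambda$ and the conformality of $f$ on stable manifolds yields $\frac{1}{C_1}\le \frac{|Df^m_s(y_{-m})|}{|Df^m_s(x_{-m})|}\le C_1$ with $C_1$ independent of $m$ and of the chosen prehistory; again only $\mathcal{C}^2$--smoothness and stable--conformality are used. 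Items (iii) and (iv) are standard for basic sets of saddle type.

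Granting these, I would run the construction of the proof of Theorem \ref{thm2} step by step. As in the second part of the proof of Theorem \ref{thm1}, produce $m\ge 1$ and a set $J_m\subset f^{-m}W\cap\Lambda$, with $W:=W^s_r(x)\cap\Lambda$, meeting every local unstable manifold of size $\vp/2$ for a small fixed $\vp<\vp_0$. Fix $t>\delta^s(x)$ and a finite cover $(U_i)_{i\in I}$ of $J_m$ by balls of diameter $<\gamma\ll 1$ with $\sum_{i\in I}(\text{diam}\,U_i)^t<\frac12$. For each $i$, pick $z_i\in\frac12 U_i\cap\Lambda$ and, using openness of $f|_\Lambda$ so that $z_i$ has exactly $d'$ preimages in $\Lambda$, form the family $\tilde F_i$ of truncated prehistories $C(\hat z_i)$ as in Theorem \ref{thm2}; since every $W^u_{\vp/2}(\hat y)\cap\Lambda$ lies in some $\Lambda(C,\vp)$ with $C\in\bigcup_i\tilde F_i$, concatenation produces collections $\Gamma_n$ with $\Lambda=\bigcup_{\bar C\in\Gamma_n}\Lambda(\bar C,2\vp)$ and makes $F_n:=\{z^{\bar C}_{-n}:\bar C\in\Gamma_n\}$ an $(n,\vp)$--spanning set of $\Lambda$. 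By (ii) one gets, for $\bar C=C_1\cdots C_s$ with $C_k\in\tilde F_{j_k}$, the distortion bound $(\ref{diametru})$, and the counting inequalities $N_{ij}\le (d')^{n_{ij}}-\sum_{l<j}N_{il}(d')^{n_{ij}-n_{il}}$ — which use only that every point of $\Lambda$ has at most $d'$ preimages in $\Lambda$ — give $\Sigma_i\le 1$ for all $i$, whence the uniform bound $(\ref{da})$. Substituting into the definition of $P_n(t\phi^s-\log d')$ evaluated on $F_n$, and using $\sum_{s>0}\big(\sum_{i\in I}(\text{diam}\,U_i)^t\big)^s<1$, gives $P_n(t\phi^s-\log d')\le\Theta T_2 e^{n\sigma\vp}$; letting $n\to\infty$ gives $P(t\phi^s-\log d')\le\sigma\vp$, then $\vp\to0$ gives $P(t\phi^s-\log d')\le 0$, so $t\ge t^s_0$, and finally $t\downarrow\delta^s(x)$ yields $\delta^s(x)\ge t^s_0$.

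The main obstacle is not a new estimate but confirming that the holomorphic hypothesis is genuinely dispensable: concretely, verifying that the Pointwise H\"older Section argument for (i) and the laminated distortion argument for (ii) go through using only the axioms of a c-hyperbolic map ($\mathcal{C}^2$--smoothness, hyperbolicity on $\hat\Lambda$, and conformality of $f$ on its stable manifolds), and that nowhere in the argument above is the uniform--along--leaves Lipschitz bound — the one place that exploited the stable manifolds being analytic disks — actually needed. Once this is checked, openness of $f|_\Lambda$ is used only to guarantee that the families $\tilde F_i$ capture all local unstable manifolds, so that $F_n$ is genuinely spanning, while the ``at most $d'$ preimages'' half of the hypothesis drives the combinatorial estimate $\Sigma_i\le 1$; dropping openness and keeping only the latter still gives the one--sided bound $\delta^s(x)\ge t^s_0(d')$, in agreement with Corollary \ref{ultimul}.
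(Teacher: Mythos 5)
Your proposal is correct and takes essentially the same route as the paper: the paper treats Theorem \ref{sf} merely by asserting that the theorems of the real-conformal section ``are proved in the same way as the previous corresponding theorems in the holomorphic case,'' and your proposal is exactly a careful verification of that assertion, isolating the ingredients of the proof of Theorem \ref{thm2} (Lipschitz continuity of $\phi^s$ on $\Lambda$, the bounded distortion of Proposition \ref{proC}, the local product structure and transitivity, and the Bowen-ball identity) and checking that each uses only $\mathcal C^2$-smoothness, hyperbolicity on $\hat\Lambda$, and stable conformality. Your observation that the one place holomorphicity genuinely enters in Theorem \ref{thm0} — the uniform Lipschitz bound for $y\mapsto E^s_y$ along stable leaves — is never invoked in the proof of Theorem \ref{thm2} is correct and is implicitly confirmed by the paper, whose c-hyperbolic restatement of Theorem \ref{thm0} deliberately drops that clause; you have simply made explicit what the paper leaves to the reader.
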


\begin{cor}
Let $f:M \to M$ be a smooth map, c-hyperbolic on the basic set $\Lambda$ which does not intersect the critical set $\mathcal{C}_f$; if $d(y)$ denotes the cardinality of $f^{-1}(y) \cap \Lambda, y \in \Lambda$ and $d', d''$ are positive integers such that $d' \le d(y) \le d'', \forall y \in \Lambda$, then for each $x \in \Lambda$ it follows that $t^s_0(d'') \le \delta^s(x) \le t^s_0(d')$, where $t^s_0(d')$ represents the unique zero of the pressure function $t \to P(t\phi^s - \log d')$.
\end{cor}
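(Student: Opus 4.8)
The final statement is the c-hyperbolic analogue of Corollary \ref{ultimul}, and the plan is to obtain it exactly the way that corollary was obtained: combine a lower bound coming from the proof of Theorem \ref{sf} with an upper bound coming from the c-hyperbolic version of Theorem 1.2 of \cite{MU1}. So the first thing I would do is check that every ingredient used in Sections 2--3 is genuinely available in the c-hyperbolic category, namely the bounded distortion along stable manifolds (the analogue of Proposition \ref{proC}), the local product structure, the transitivity of $f|_\Lambda$, and the Lipschitz continuity of $\phi^s$ on $\Lambda$ (the c-hyperbolic analogue of Theorem \ref{thm0}, which is already stated in Section 4). Here conformality of $f$ on its stable manifolds plays the exact role that holomorphicity played, so none of these steps requires a new idea.

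For the lower bound $\delta^s(x) \ge t^s_0(d'')$, I would revisit the proof of Theorem \ref{sf} and observe that it nowhere uses that the number of preimages in $\Lambda$ is constant: the covering of $\Lambda$ by concatenations of sets $\Lambda(C,\vp)$, the passage to Bowen balls and spanning sets, and the combinatorial count leading to the bound $\Sigma_i \le 1$ in (\ref{sigmai}) and to (\ref{gata}) only use that each point of $\Lambda$ has \emph{at most} $d''$ preimages in $\Lambda$, which holds since $d(y)\le d''$. Running that argument with $d''$ in the role of $d'$ yields $P_n(t\phi^s-\log d'')< \Theta T_2\, e^{n\sigma\vp}$, hence $P(t\phi^s-\log d'')\le \sigma\vp$ for every $t>\delta^s(x)$; letting $\vp\to 0$ gives $P(t\phi^s-\log d'')\le 0$ and therefore $\delta^s(x)\ge t^s_0(d'')$.

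For the upper bound $\delta^s(x)\le t^s_0(d')$, since $d(y)\ge d'$ for all $y\in\Lambda$, every point of $\Lambda$ has at least $d'$ preimages in $\Lambda$, so the hypothesis of the upper estimate of \cite{MU1} is met; its proof again uses only conformality on stable leaves, bounded distortion and Lipschitz continuity of $\phi^s$, and so transfers verbatim to the c-hyperbolic setting, giving $\delta^s(x)\le t^s_0(d')$. Since $\phi^s$ is strictly negative, $t\mapsto P(t\phi^s-\log d')$ is strictly decreasing (Proposition \ref{pro2} vi)), so $t^s_0(d')$ and $t^s_0(d'')$ are well defined; combining the two one-sided estimates yields $t^s_0(d'')\le \delta^s(x)\le t^s_0(d')$ for every $x\in\Lambda$ (and in particular the monotonicity $t^s_0(d'')\le t^s_0(d')$).

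The only real obstacle is bookkeeping: one must make sure that in the c-hyperbolic setting the distortion constants in the analogue of Proposition \ref{proC} (the constant $C_1$, and the finer control $C_1(\vp)\le C_2\vp$ used in Proposition \ref{pro4}(b)) remain uniform in the length of the prehistory and in $\vp$, because it is precisely that uniformity which makes the factor $e^{n\sigma\vp}$ in (\ref{gata}) harmless as $\vp\to 0$. This is exactly the place where conformality of $f$ on stable manifolds must be invoked as the substitute for holomorphicity, and where a small amount of care is warranted; beyond that the argument is a faithful transcription of the proofs of Theorem \ref{sf} and Corollary \ref{ultimul}.
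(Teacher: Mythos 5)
Your proposal is correct and follows exactly the route the paper intends: the lower bound $\delta^s(x)\ge t^s_0(d'')$ is obtained by observing that the combinatorial estimate $\Sigma_i\le 1$ in the proof of Theorem \ref{sf} (equivalently, Theorem \ref{thm2}) only uses that each point has \emph{at most} $d''$ preimages in $\Lambda$, while the upper bound $\delta^s(x)\le t^s_0(d')$ is the c-hyperbolic transcription of Theorem 1.2 of \cite{MU1}, which only needs \emph{at least} $d'$ preimages. You also correctly flag the one place where care is needed in transferring from the holomorphic to the c-hyperbolic setting, namely the uniformity of the bounded-distortion constants along stable leaves coming from conformality; this matches the paper's remark that the real-case statements are proved ``in the same way'' as their holomorphic counterparts.
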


This corollary does not require $f|_{\Lambda}$ to be open.

\medskip

The authors are grateful to the referee for a careful reading of the paper, several comments, and for suggesting to include a separate section with the real case.

\bigskip

Eugen Mihailescu: \ 
Institute of Mathematics ``Simion Stoilow'' of the Romanian Academy, 
P. O. Box 1-764, 
RO 70700, Bucharest, Romania.
Email: Eugen.Mihailescu\@@imar.ro

Webpage: http://stoilow.imar.ro/~mihailes/

\

Mariusz Urba\'nski: \ 
Department of Mathematics, University of North Texas, 
P.O. Box 311430, Denton, TX 76203-1430, USA.
Email:urbanski\@@unt.edu 

Webpage: www.math.unt.edu/~urbanski

\end{document}